\newtheorem{thm}{Theorem}[section]
\newtheorem{lem}[thm]{Lemma}
\newtheorem{prop}[thm]{Proposition}
\theoremstyle{definition}
\theoremstyle{remark}
\newtheorem{rem}[thm]{Remark}
\numberwithin{equation}{section}
\newcommand{\eps}{\varepsilon}
\newcommand{\D}{\mathcal{D}}
\newcommand{\OO}{\mathcal{O}}
\newcommand{\U}{\mathcal{U}}
\newcommand{\X}{\mathcal{X}}
\newcommand{\Y}{\mathcal{Y}}
\newcommand{\dist}{\textrm{dist}}
\newcommand{\st}{{\rm s}}           
\newcommand{\un}{{\rm u}}           
\newcommand{\vphi}{\varphi}
\newcommand{\tM}{ \widetilde{\mathcal M}}
\newcommand{\Mv}{ {\mathcal M}^v }
\def\torus{{\mathbb T}}
\def\real{{\mathbb R}}
\def\tLambda{ {\tilde \Lambda}}
\begin{document}
\title{Global Melnikov Theory
in Hamiltonian Systems with General Time-dependent Perturbations}

\author{Marian\ Gidea$^\dag$}
\address{Yeshiva University, Department of Mathematical Sciences, New York, NY 10016, USA }
\email{Marian.Gidea@yu.edu}
\thanks{$^\dag$ Research of M.G. was partially supported by NSF grant  DMS-1515851, and the Alfred P. Sloan Foundation grant G-2016-7320.}
\author{Rafael de la Llave${^\ddag}$}
\address{School of Mathematics, Georgia Institute of Technology, Atlanta, GA 30332, USA}
\email{rafael.delallave@math.gatech.edu}
\thanks{$^\ddag$ Research of R.L. was partially supported by NSF grant DMS-1500943.}

\subjclass[2010]{Primary,
37J40;  
37C50; 
37C29; 34C37; 
Secondary,
70H08. 
} \keywords{Melnikov method; homoclinic orbits; Arnold diffusion.}
\date{}

\begin{abstract}
We consider a mechanical system consisting of $n$ penduli and a
$d$-dimensional generalized rotator subject to a time-dependent
perturbation.
The perturbation is not assumed to be
either Hamiltonian, or periodic or quasi-periodic,
 we allow for rather  general time-dependence.

The strength of the perturbation is given
by a parameter $\eps\in\mathbb{R}$.
For all $|\eps|$ sufficiently small, the  augmented flow
-- obtained by making the time into a new variable --
has a $(2d + 1)$-dimensional normally hyperbolic locally invariant manifold
$\tilde\Lambda_\eps$.

We define a Melnikov-type  vector,
which gives the first
order expansion of the displacement of the stable and unstable manifolds
of $\tilde\Lambda_0$ under the perturbation.  We provide an explicit formula
for the Melnikov vector in terms of convergent improper integrals
of the
perturbation along homoclinic orbits of the unperturbed system.

We show that if the perturbation satisfies some explicit
non-degeneracy conditions, then the stable and
unstable manifolds of $\tilde\Lambda_\eps$, $W^s(\tilde\Lambda_\eps)$ and
$W^u(\tilde\Lambda_\eps)$, respectively, intersect along a transverse
homoclinic manifold, and, moreover, the splitting of
$W^s(\tilde\Lambda_\eps)$ and $W^u(\tilde\Lambda_\eps)$ can be explicitly
computed, up to the first order, in terms of the Melnikov-type
vector.  This implies that the excursions along some homoclinic
trajectories yield a non-trivial increase of order $O(\eps)$ in the
action variables of the rotator, for all sufficiently small perturbations.

The formulas that we obtain are independent
of the unperturbed motions and give, at the same time,
the effects on periodic, quasi-periodic, or general-type orbits.

When the
 perturbation is Hamiltonian, we express the effects of the
perturbation, up to the first order, in terms of a Melnikov
potential. In addition, if the perturbation is periodic, we obtain
that the non-degeneracy conditions on the Melnikov potential are
generic.

\end{abstract}

\maketitle \markboth{Global Melnikov Theory} {M. Gidea, R. de la Llave}

\section{Introduction}

The goal of this paper is to develop a global Melnikov
theory for the splitting of homoclinic manifolds in Hamiltonian
systems subject to time-\break dependent perturbations. The dependence on
time of the perturbation  includes
\emph{periodic, quasi-periodic, and general time-dependence}.

We adopt a global point of  view and we think of a normally hyperbolic
invariant manifold (NHIM) rather than thinking of
single orbits, hence we obtain  a  theory for all unperturbed orbits irrespective
of whether they are periodic or quasi-periodic or any
other   general behavior.  We know, by
the general theory of
Normally hyperbolic manifolds,
that the NHIM, together with its stable and unstable manifolds,  persist from the unperturbed system  to the perturbed one and that they
depend smoothly on parameters. Hence, our  only focus is
to obtain explicit formulas
 for the splitting of the stable and unstable manifolds, up to the first order,
knowing that these expansions exists.

These formulas can be obtained using only that some of the variables are
slow. In particular, the theory can accommodate \emph{general perturbations,
not necessarily Hamiltonian, including perturbations that exhibit small dissipation}.

Thus, we  develop a global theory that applies to all the orbits,
which is formulated in terms of explicit integrals. The integrals are improper,
but \emph{the integrands converge exponentially fast}, together with several of their
derivatives.

In the special case when the perturbations are Hamiltonian,
using the symplectic geometry of the problem, the obtained formulas can be
expressed very concisely.

Rather than developing a completely general theory, in  this paper
we just consider a very simple model, which generalizes several
models proposed in the literature.   We   develop in full
detail the perturbative theory of homoclinic intersections, and we also relate our results to
the calculations based on the so-called scattering map, which
gives detailed information on the homoclinic excursions.
See
\cite{DelshamsLS00, DelshamsLS08a}. We note that, taking advantage
of the symplectic properties, the scattering map allows to
compute the effect of homoclinic excursions on
all the variables, both fast and slow variables. Other methods
allow only to study the effect on the slow variables.
The theory presented here  relies only
on the difference of scales, and it could work also in other
contexts -- with suitable assumptions -- in other
systems such as parabolic manifolds.

Even if we do not develop it in this paper, we note that
the approach presented here works also in some infinite dimensional
problems or for other types of manifolds
such as singular or parabolic.
 We  stress that we do not need to assume that the
perturbation is periodic or quasi-periodic. One hypothesis that
suffices -- but which could be weakened -- is that the
perturbing  vector-field and a few of its derivatives are bounded.
In future work, we  also hope to apply this approach to random perturbations.

The strategy  of this paper is in sharp contrast to
other presentations of the Melnikov theory
 in the literature,  which try  to combine the derivation of
the formulas with the proof of persistence
of the orbits, which may be cumbersome (sometimes, the presentations assume
that the perturbation vanishes identically on the orbits considered).

Many papers
have to present different treatments
depending  both  on the nature of the unperturbed  orbits (one theory for
periodic orbits, another theory for quasi-periodic orbits)
and on the nature of the time dependence
of the perturbation (there  is one   theory for periodic, and another
one for quasi-periodic perturbations, etc.)
 The method here does not need to distinguish between
the different types of unperturbed orbits and can deal
with
perturbations of different time dependence.  We can see
that many of the problems of the standard Melnikov theory for
quasi-periodic solutions arise from the fact that the orbits themselves
may not persist so that, if one focuses only on quasi-periodic
solutions, it is not clear what the stable/unstable manifolds are
attached to. In our case, the stable/unstable manifolds are
attached to different points of the NHIM. This allows us to
obtain a regular theory. Moreover, it seems to be what is needed
in many applications, e.g to Arnol'd diffusion \cite{GideaLlaveSeara14}.

The reader should be warned that many standard treatments of Melnikov theory,
for example \cite[p. 454 (4.25)]{Wiggins90}, \cite{BeigieLW91, BeigieW92},
give  very different formulas, such as in terms of conditionally convergent
integrals that are supposed to be interpreted as
taking the limit of integration to infinity \emph{along  subsequences of
times}. Unfortunately, finding zeros of the resulting
function is difficult  to interpret physically,
since the value of the integral
would depend on the sequence of times chosen.

\subsection{Model considered in this  paper}
In this section we describe a mechanical system consisting of $n$ penduli and a
$d$-dimensional generalized rotator, which are weakly coupled via a small, time-dependent perturbation of a general type.

\subsubsection{Unperturbed system.}
The phase space for the unperturbed system is the symplectic  manifold $M=\mathbb{R}^n\times\mathbb{T}^n\times \mathbb{R}^d\times\mathbb{T}^d$,  endowed with the standard symplectic form
\[\Upsilon=\sum_{i=1}^{n}dp_i\wedge dq_1+\sum_{j=1}^{d}dI_j\wedge d\vphi_j, \] where each point $x\in M$ is uniquely described by its coordinates $x=x(p,q,I,\vphi)\in \mathbb{R}^n\times\mathbb{T}^n\times \mathbb{R}^d\times\mathbb{T}^d$. Here $\mathbb{T}=\mathbb{R}/\mathbb{Z}$.
We denote
$p=(p_1,\ldots, p_n)\in \mathbb{R}^{n}$, $q=(q_1, \ldots, q_n)\in \mathbb{T}^{n}$, $I=(I_1,\ldots,I_d)\in\mathbb{R}^d$, $\vphi=(\vphi_1,\ldots,\vphi_d)\in \mathbb{T}^d$.

On $M$ we consider the autonomous Hamiltonian:
\begin{equation}\label{eqn:hamiltonian}
\begin{split}
H_0(p,q,I,\vphi,t)=h_0(I)+\sum_{i=1} ^{n}\pm\left (\frac{1}{2}p_i^2+V_i(q_i)\right).
\end{split}
\end{equation}
In the above, we assume that  each potential $V_i$ is periodic of period $1$ in $q_i$.

We denote the corresponding Hamiltonian vector field  by
\begin{equation}\label{eqn:H_0VF}
\X^0=J\nabla_x H_0,
\end{equation}
where \[J=\left(
            \begin{array}{cc}
              0 & -I \\
              I & 0 \\
            \end{array}
          \right)
.\]

The corresponding Hamilton equations are given by
\begin{equation}\label{eqn:unp}
\dot x=\X^0(x),
\end{equation}
and the corresponding Hamiltonian flow on $M$ is denoted by $\phi^t_\eps$.

The term $h_0(I)$ represents an integrable Hamiltonian system in the
variables $(I,\vphi)$, and can be viewed as a generalized $d$-dimensional
rotator. The action coordinate $I$ is preserved by the Hamiltonian
flow of $h_0$, so each solution of the flow of $h_0$ is confined to a
$d$-dimensional torus. The motion on each torus is given by a linear
flow of frequency vector $\omega(I)=({\partial h_0}/{\partial I})(I)$.
Hence the dynamics in each coordinate $\vphi_j$ is a rotation of
frequency $\omega_j(I)$, $j=1,\ldots,d$.

The terms $P_i(p_i,q_i)=\pm\left (\frac{1}{2}p_i^2+V_i(q_i)\right)$, $i=1,\ldots,n$, are  Hamiltonians in the variables $(p_i,q_i)$, and  they describe a system of $n$ penduli. Under some general conditions on the potentials $V_i$ (see assumption \textbf{H2} below), each
pendulum $P_i$ has a hyperbolic fixed point whose stable and unstable manifolds coincide, forming a separatrix in the phase $\{(p_i,q_i)\}$ of $P_i$.
Sometimes the penduli are called
integrable, but note that the action variables are singular
on the separatrices, so that they are not integrable in the sense
of admitting  smooth action angle variables.

A caricature of a
mechanical system as in \eqref{generalperturbation}, showing one pendulum
and one rotator,   in which the coupling is imagined as a varying
magnetic field, is illustrated in Figure \ref{pendulumrotor}.

\begin{figure}
\includegraphics[width=0.4\textwidth]{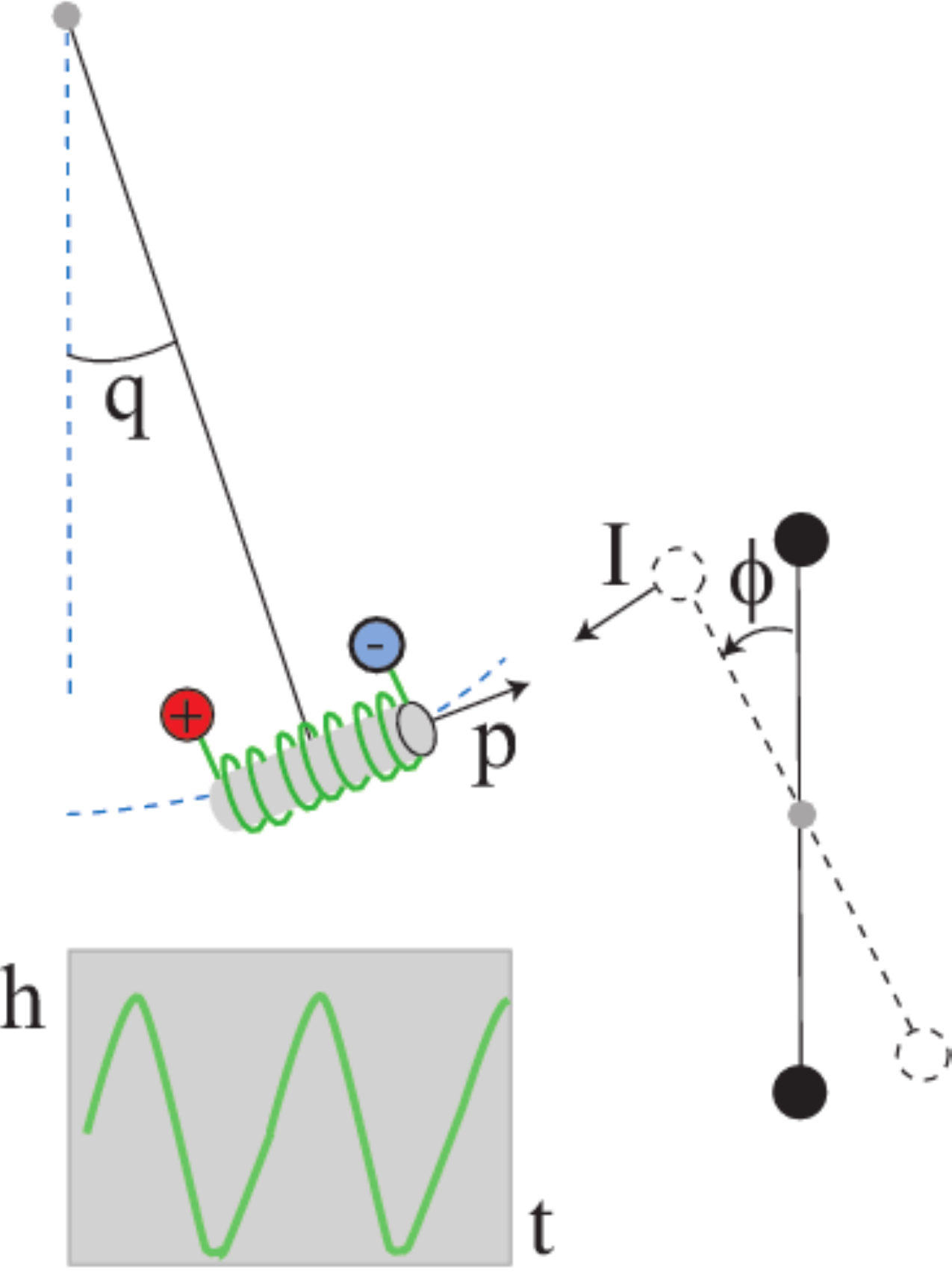}
\caption{Weakly coupled pendulum and rotator.}
\label{pendulumrotor}
\end{figure}

The model \eqref{eqn:hamiltonian}
appears as  a normal form near a simple resonance of a typical Hamiltonian system. Models of this form have been considered in many works, among them \cite{HolmesM82,Wiggins90,DLS_Multi}.

Note that we allow   the sign  in front of the
total energy function  to be indefinite.
Models with Hamiltonians which are neither positive
definite nor negative definite appear naturally in the theory of interaction of waves,
such as  in  ``Krein cruch'' models, see \cite{MacKay}
and in many other models.
For our treatment, the signs of the penduli do not make
any difference, but it seems that they make a crucial difference
for variational methods used in Arnol'd diffusion.

\subsubsection{General perturbation.} We consider a general time-dependent perturbation of the system \eqref{eqn:H_0VF}, given by
\begin{equation} \label{generalperturbation}
\dot x = \X^0(x) + \eps \X^1(x,t;\eps),
\end{equation}
where $\X^1$  is a time-dependent, parameter-dependent vector field on $M$, and  $\eps\in\mathbb{R}$ is a parameter representing the size of the perturbation.
On $\X^1$  we require only  some mild regularity
and boundedness assumptions that we will specify later.
In particular,  we allow that the perturbation is not Hamiltonian -- e.g., when
the mechanical system has some small friction.

As usual in differential geometry, we will consider vector fields also as
differential operators; that is,  given a smooth vector field $\X$ and a smooth function $f$ on the
phase space $M$,
\begin{equation}
\label{eqn:vf_derivative}(\X  f)(x) = \sum_j (\X)_j (x) (\partial_{x_j} f)(x),
\end{equation}
where the subindex $j$ denotes the components.

We denote by $\phi^{t_1, t_0}_\eps$ the non-autonomous
flow of \eqref{generalperturbation}.
That is, $\phi^{t_1, t_0}_\eps$ is the map that, given initial condition
at $t = t_0$, produces the solution of \eqref{generalperturbation} evaluated at
at $t = t_1$. Note that
$\phi^{t_2, t_1}_\eps \circ \phi^{t_1, t_0}_\eps
= \phi^{t_2, t_0}_\eps $.

In is convenient to analyze the system in the augmented phase space
\[\tilde{M}=\mathbb{R}^{n} \times \mathbb{T}^{n}\times\mathbb{R}^d  \times \mathbb{T}^d\times\mathbb{R}.\]
A point in $\tilde M$ will be denoted by $\tilde{x}=\tilde{x}(p,q,I,\vphi,t)=\tilde{x}(x,t)$. The  non-autonomous flow of \eqref{generalperturbation} gives rise to an augmented flow $\tilde{\phi}^\sigma_\eps $ on $\tilde{M}$ given by
\begin{equation}\label{eqn:extended_hamiltonian}
\tilde{\phi}^\sigma_\eps(x,t)=(\phi^{t + \sigma, t}_\eps(x),t+\sigma), \textrm{ for } \sigma \in\mathbb{R}.
\end{equation}

We will assume  that the solutions of the equations \eqref{generalperturbation}   are defined in
a domain $\D \subset \tilde{M}$ given by
\begin{equation} \label{domain}
\D = \{ (p, q, I, \vphi, t) \, | (p,q) \in \OO_1, I \in \OO_2, \vphi \in \torus^d, t \in \real \},
\end{equation}
and we assume $C^k$-regularity
and uniform bounds for the perturbation $\X^1$ and its derivatives in the domain $\D$.
Here $\OO_1\subset \real^d \times \torus^d$ is a neighborhood of the family of separatrices of the penduli
(discussed in more detail in \eqref{separatrix}), and $\OO_2\subseteq\real^d$ is an open
ball.  We will assume that the boundaries of $\OO_1, \OO_2$
are smooth (this will simplify the extension arguments).
Of course, in many applications the
system will be defined in all $\tilde M$.

\subsubsection{Hamiltonian perturbation.} We will
give special attention to
the particular case when the perturbation in \eqref{generalperturbation} is Hamiltonian, that is, is given by

\begin{equation}\label{eqn:h}
   \X^1(x,t;\eps)=J\nabla_x h(x,t;\eps),
\end{equation}
where $h$ is a time-dependent, parameter-dependent Hamiltonian function.
In this case, the solutions of \eqref{generalperturbation} are the solutions of the Hamilton equations for the Hamiltonian \begin{equation}\label{eqn:hamiltonian_eps}
\begin{split}
H_\eps(p,q,I,\vphi,t)=h_0(I)+\sum
_{i=1} ^{n}\pm\left (\frac{1}{2}p_i^2+V_i(q_i)\right)+\eps
h(p, q, I,\vphi, t; \eps).
\end{split}
\end{equation}

\subsubsection{Overview on the geometric effect of perturbations.}
As mentioned before, the unperturbed system, given by the Hamiltonian \[H_0(p,q,I,\vphi)=h_0(I)+P(p,q):=h_0(I)+\sum_{i=1}^{n}\pm P_i(p_i,q_i),\] can be viewed as the product of $d$ rotators and $n$ penduli.
Under some general conditions on the potentials $V_i$ (see assumption \textbf{H2} below), each pendulum $P_i$ has a hyperbolic fixed point whose stable and unstable manifolds coincide.
Fixing the pendulum variables
to be at a hyperbolic fixed point, and restricting the action variable
to some closed ball in the action space, we obtain a $(2d)$-dimensional annulus \begin{equation}\label{Lambda0}
\Lambda_0 = \{ (I, \vphi) \, | I \in B \subset \real^d,\,  \vphi \in \torus^d \}
\end{equation}
 which is foliated by $d$-dimensional tori invariant under the flow of the unperturbed system.
These invariant tori represent level sets in $\Lambda_0$ of the action $I$.
Each of these tori has homoclinic excursions to itself, along the separatrices of the penduli.

In the augmented phase-space, there is a corresponding $(2d+1)$-dimensional normally hyperbolic invariant manifold \begin{equation}\label{tildeLambda0}
\tilde\Lambda_0 = \{ (I, \vphi,t) \, | I \in B \subset \real^d,\, \vphi \in \torus^d ,\, t\in\real\}.
\end{equation}

There are no orbits in the unperturbed system that
originate near one level set in $\Lambda_0$ of the action variable   and end  up near a different  level set of the action variable. This is not the case, in general, when the perturbation is turned on.

In Section~\ref{sec:reminder} we will show  that
the annulus $\tilde\Lambda_0$ described above
is a normally hyperbolic manifold in the sense of \cite{Fenichel71, HirschPS77, Pesin04}.
 The theory of normally
hyperbolic invariant manifolds shows that, for every small enough
$\eps$  there exits a  manifold $\tilde\Lambda_\eps$
diffeomorphic to the original invariant manifold
which is invariant under the flow of $H_\eps$.
The perturbed manifold $\tilde\Lambda_\eps$ possesses
$(2d+n+1)$-dimensional stable and unstable manifolds.

The main contribution of this paper is to develop a Melnikov
vector, which measures the effect of the perturbation  on the
homoclinic orbits of the unperturbed system, and to show the
existence, under certain conditions, of trajectories of the perturbed
system that change the action variable by some non-trivial amount, $\OO(\eps)$
When the perturbation is Hamiltonian, we show
that the Melnikov vector derives from a Melnikov potential.

More precisely, we show that if the Melnikov vector (or potential) satisfies
certain non-degeneracy conditions, which are given explicitly and are
satisfied by generic perturbations, then there exit homoclinic orbits
that start near one level set of the action variable in  $\tilde\Lambda_\eps$  and end up close
to another level set of the action variable  $\tilde\Lambda_\eps$, at a distance of order $O(\eps)$ apart.

The methods used in this paper are standard (and
rather elementary)  methods of
perturbation theory for systems with slow variables.
We note that this part of the argument allows
also that the perturbation is not Hamiltonian.
This is a problem that appears in several practical problems
with small dissipation.

In the case that the perturbation is Hamiltonian, we can
use the geometry to express the results more concisely.
This formalisms has
qualitative consequences.  Using the symplectic
 geometry, there is a relation between critical points of
functions and intersections. Then, using
Morse theory, one can conclude existence of intersections.
Of course, for more general perturbations considered here
(e.g. dissipative), the Melnikov vector could fail
to come from a potential.

We note that there is an alternative and more sophisticated
point of view, the theory of the scattering maps to normally
hyperbolic manifolds \cite{DelshamsLS08a}.  The theory of the scattering map
uses significantly more
sophisticated tools than the perturbation theory for
slow variables, but we will see leads to the same results for
the slow variables. The scattering map theory also produces
results for the fast variables. In this paper, therefore,
we include a comparison. We hope that the use of the
elementary perturbation theory could be accessible to a wider audience.

\subsubsection{Relation with  Arnold diffusion.}
An important question is whether  changes in the action of order
$O(\eps)$, as established in this paper, accumulate to a change of order $O(1)$, i.e, which is
independent of the size of the perturbation.

 The
interpretation of the orbits that drift by a distance of order $O(1)$
in the action variable is that, for arbitrarily
small perturbations, there exist initial conditions on the
mechanical system on which the rotator spins faster and faster over
time. This question is at the heart  of the Arnol'd diffusion,
which is experiencing great progress these days.  We will not discuss
this problem here, but, of course, it is one of the motivations
for the present research. With the goal in mind of Arnol'd diffusion,
obtaining the information on the effect of
homoclinic excursions in the fast variables is
a useful information. See \cite{GideaLlaveSeara14}.

\subsubsection{Organization of the paper.}
In Section~\ref{sec:main_results} we formulate a sequence of
hypotheses of increasing restrictiveness  and state the main results
which use subsets of the hypotheses  stated before.
In Section~\ref{section:theproofs} we present the proofs
of the results.  In Appendix~\ref{sec:hyperbolicity},
we collected some of the main results of the
theory of Normally Hyperbolic Invariant Manifolds
(henceforth, NHIM).
 This appendix is mainly to set up
the notation.
In Appendix \ref{sec:extension} we discuss the extension of a flow or vector field defined on a non-compact manifold with boundary, in order to obtain a  result on the persistence of normally hyperbolic locally invariant manifold.
In Appendix~ \ref{sec:scatteringnhim},
we present an alternative approach for Hamiltonian systems
based on the scattering map.
Finally, in Appendix~\ref{sec:nonautonomous}, we present
another formalism for time-dependent perturbations which
involve recurrence. This formalism  has been used in \cite{GideaL17},
where it is shown
that the recurrence in the perturbation can
lead to accumulation of effects in some models.

\section{Set-up and main results}\label{sec:main_results}

We will describe several assumptions on the system
\eqref{eqn:hamiltonian}, and on the perturbation $\X^1$, listed below. Then we will state the main
results of the paper, under the appropriate subset of the  assumptions
listed below.  We note that these assumptions can be readily verified
in concrete models of interest.

We have chosen to present the assumptions  in a cookbook manner
so that they can be used by the practitioner. The motivation,
physical interpretation will be discussed in Section~\ref{section:theproofs}

Later, in Section~\ref{sec:genericity} we
will show that the assumptions leading to
the existence of transverse intersections are generic (indeed, $C^2$ open
and $C^\infty$ dense) in the case that the perturbations are Hamiltonian
and periodic.  We think that the assumption of Hamiltonian
indeed does belong since adding dissipative perturbations can lead to
manifolds that doe not intersect. On the other hand, we
believe that the assumption of periodicity can be relaxed.

\subsection{Assumptions for subsequent results}
\label{sec:assumptions}

\subsubsection{Assumptions \textbf{H1} and \textbf{H2}.}{$ $}

The hypothesis \textbf{H1} will be divided into two parts:
\textbf{H1.1} and \textbf{H1.2}

\textbf{H1.1} \textit{The terms $V_i$, $h_0$ in \eqref{eqn:hamiltonian}
are $C^{k+1}$-smooth with $k\geq k_0$, for some $k_0\geq 2$ suitably large.
We assume that the derivatives of $V_i$ are uniformly
bounded in $q_i\in\torus$, and that the derivatives of $h_0$ are
bounded in an open set. }

\textbf{H1.2}
\textit{We assume that the perturbation   $\X^1$, in the general case,  (resp.
$h$ in the Hamiltonian case) is defined on the domain $\D$ from \eqref{domain}, and
is $C^k$ bounded (resp. $C^{k+1}$ boundend) there.}

\smallskip

The boundedness assumption in \textbf{H1.2} on the derivatives of $h$
is satisfied automatically if the dependence of $h$ on $t$ is periodic
or quasiperiodic, which are the cases considered in the literature
before.

\smallskip

\textbf{H2} \textit{Each potential $V_i:\torus\to \mathbb{R}$ has a
non-degenerate local  maximum (which, without loss of generality,
we set at $0$). Hence,  $V''_i(0)<0$.}

\smallskip

Hypothesis \textbf{H2} implies
that each pendulum has a homoclinic orbit to $(0,0)$, which we parametrize as $(p^0_i(t),q^0_i(t))$, $t\in\mathbb{R}$. Note that the parameterization is unique up to the choice of origin of time $t$.
Denote $V''_i(0) = -(\lambda_i)^2$, and  $\lambda_+ = \min_i \lambda_i$.  This means
that homoclinic orbits converge (both in the future and in the past) to $(0,0)$ with
a exponential rate which is at least $\lambda_+$.

In the  system \eqref{eqn:hamiltonian}
 there is an $n$-dimensional
family of homoclinic orbits for the whole system of penduli given by
\begin{equation}\label{separatrix}
\begin{split}(p^0_\tau(t),q^0_\tau(t))&=(p^0,q^0)(\tau+t\bar 1)\\&=(p^0_1(\tau_1+t), \ldots, p^0_n(\tau_n+t),
q^0_1(\tau_1+t), \ldots, q^0_n(\tau_n+t)),\end{split}
\end{equation}
 where
$\tau=(\tau_1,\ldots,\tau_n)\in\mathbb{R}^n$  and  $\bar 1=(1,\ldots,1)\in\mathbb{R}^n$.
The parameters
$\tau_1, \ldots, \tau_n$ represent the initial times (phases) of the
motion of the individual penduli.

We can think of the homoclinic manifold as a product of
the homoclinic orbits of each
 penduli. The parameters $\tau_i$ indicate the
time at which the jump of the pendulum $i$ takes place (for example,
identify the time where it reaches a maximum). In a non-autonomous
perturbation, the energy gained in an excursion may depend on
which time each of the penduli jumps.

\subsubsection{Melnikov vector and potential.}
Using the homoclinic intersections and the perturbations, we
will define the Melnikov integrals. They are the crucial
objects for this paper. Some of the subsequent hypothesis
will involve their properties.

In the case of general perturbations \eqref{generalperturbation},
we define
the $n$-dimensional  \emph{Melnikov vector} $\Mv=(\Mv_i)_{i=1,\ldots,n}$, of components
\begin{equation} \label{eqn:melnikov_vect}
\begin{split}
\Mv_i(\tau, I, \vphi, t) =
\displaystyle-\int
_{-\infty}^{\infty}&\left[
(\X^1 P_i)(p^0(\tau+\sigma\bar 1),q^0(\tau +\sigma\bar 1),I,\vphi+\sigma\omega(I),t+\sigma;0)\right .\\
&\left .-(\X^1P_i)(0,0,I,\vphi+\sigma\omega(I),t+\sigma;0)\right
]d\sigma.
\end{split}
\end{equation}

Explicitly, for $\tilde x=\tilde{x}(p,q,I,\phi,t)$, if we write $\X^1$ in terms of its components as \[\X^1=((\X^1)^{p_1},(\X^1)^{q_1},\ldots,(\X^1)^{p_n},(\X^1)^{q_n}, (\X^1)^{I_1},(\X^1)^{\phi_1},\ldots, (\X^1)^{I_d},(\X^1)^{\phi_d}),\] then $\X^1P_i$ is given, due to \eqref{eqn:vf_derivative}, by \[(\X^1P_i)(\tilde x)=(\X^1)^{p_i}(\tilde x)(\partial _{p_i}P_i)(p_i,q_i)+(\X^1)^{q_i}(\tilde x)(\partial _{q_i}P_i)(p_i,q_i).\]

\begin{rem}
Note that the Melnikov vector, of dimension $n$,
has coordinates which are integrating a function in
phases space (namely $X^1P_i$, a directional derivative of
the function $P_1$) over a homoclinic orbit and subtracting the
integral over the asymptotic orbit.

It depends on the phases $\tau_i$ that specify the homoclinic
excursion and the final points of the orbit as well as the  time $t$.

As we will see in Section~\ref{section:theproofs},
the physical meaning of the Melnikov
vector $\Mv$ in \eqref{eqn:melnikov_vect}, is that it is
the expression in some coordinates of
the leading order in $\eps$ of the splitting
the stable and unstable manifolds.   The coordinates will
be specified in Section~\ref{section:theproofs}.

This will make plausible our main result,
Theorem~\ref{prop:melnikov_jump}, which shows
that if this leading order has a non-degenerate zero,
we obtain a transversal intersection for small enough $\eps$.
\end{rem}

\medskip

When the perturbation is Hamiltonian, as in \eqref{eqn:h},
we will see that the Melnikov vector is the symplectic gradient
of a function (as it often happens in a function, all small
effects can be related to gradients of functions).
See Remark~\ref{rem:poisson_integral}.

We define the  \emph{Melnikov potential}  for the homoclinic
family  $(p^0_\tau,q^0_\tau)$  by
\begin{equation}\label{eqn:melnikov_int}
\begin{split}
\tM(\tau,I,\vphi,t)=\displaystyle-\int
_{-\infty}^{\infty}&\left
[h(p^0(\tau+\sigma\bar 1),q^0(\tau +\sigma\bar 1),I,\vphi+\sigma\omega(I),t+\sigma;0)\right .\\
&\left .-h(0,0,I,\vphi+\sigma\omega(I),t+\sigma;0)\right]d\sigma.
\end{split}
\end{equation}

\begin{rem}
Note that, since
$(p^0(\tau + \sigma {\bar 1}),
p^0(\tau + \sigma {\bar 1}) )$ approaches $(0,0)$, as $\sigma \to \pm\infty$,
exponentially fast, the integrand  in \eqref{eqn:melnikov_vect}, as well as
 in \eqref{eqn:melnikov_int},
converges to $0$ exponentially fast and the integral is
absolutely convergent.
A more detailed study
of the convergence of
the integrand and its derivatives
is taken up in  Proposition \ref{prop:melnikov_int} below.
\end{rem}

\begin{rem}\label{rem:poisson_integral}
The fast convergence of the integrands and
their derivatives  allows us to take derivatives out of the
the improper integrals.

In the Hamiltonian case, we have \begin{equation}(\X^1P_i)=(-\partial_{q_i} h) (\partial _{p_i}P_i) +(\partial_{p_i} h) (\partial _{q_i}P_i)  = [P_i, h],\end{equation}
with
$[ \cdot, \cdot ]$ the Poisson bracket.
Using that the time-dependent Hamiltonian flow
preserves the Poison bracket $[P_i, h]\circ \phi^{t,s}  =
[P_i \circ \phi^{t,s}, h\circ \phi^{t,s}]$,
we can take  the Poisson bracket out of the integral
defining the Melnikov vector, and  we have
in the Hamiltonian perturbation case,
\begin{equation}\label{geometrymelnikov}
 \Mv_i = [P_i, \tM] = \partial_{\tau_i} \tM.
\end{equation}
The (elementary) last identity in \eqref{geometrymelnikov} will
be discussed in more detail in \eqref{tauderivative}.
The usefulness of \eqref{geometrymelnikov}
is that it relates the Melnikov vector
to the gradient of a function. When the perturbation is periodic, we
will be able to show that  the Melnikov function has some periods in $\tau$
so that critical points have to exist.
\end{rem}

\subsubsection{Assumptions \textbf{H3} and \textbf{H4}.}
The next assumption
is in terms of the Melnikov vector \eqref{eqn:melnikov_vect}, or
the Melnikov potential \eqref{eqn:melnikov_int}.

\smallskip

\textbf{H3.} \it The Melnikov vector associated to the homoclinic
family $(p^0_\tau,q^0_\tau)$ satisfies the following non-degeneracy
condition.

There exists an open  ball $\mathcal{I}^*\subseteq \OO_2\subseteq\mathbb{R}^d$ and an open set $\mathcal{O}\subseteq\mathbb{T}^d\times\mathbb{R}$
such that for every $(I,\vphi,t)\in \mathcal{I}^*\times\mathcal{O}$
the map
\[\tau\in\mathbb{R}^n \to
\Mv(\tau,I,\vphi,t)\in\mathbb{R}^n \]
has a non-degenerate
zero $\tau^*$ (that is,  the $(n \times n)$ matrix
$D_\tau \Mv( \tau^*)$ has full rank $n$)
 which is locally given, by the
implicit function theorem,  in the form
\[\tau^*=\tau^*(I,\vphi,t),\]
for $(I,\vphi,t)\in \mathcal{I}^*\times\mathcal{O}$.

In the Hamiltonian case, the above assumption
is equivalent, by Remark~\ref{rem:poisson_integral}, to assuming
that
\[\tau\in\mathbb{R}^n \to
\tM(\tau,I,\vphi,t)\in\mathbb{R}\]
has a non-degenerate critical point at $\tau^*$. \rm

\smallskip

Consider now the case when the perturbation is Hamiltonian, as in  \eqref{eqn:h}.

Let \[ \mathcal{M}^*(I,\vphi,t)=\tM(\tau^*(I,\vphi,t),I,\vphi,t).\]

Consider the  mapping
\begin{equation}\label{eqn:m_reduced} {\widetilde{\mathcal{M}}^*}(I,\theta)={\mathcal{M}}^*(I,\theta,0),\end{equation}
defined for all $ (I,\theta)$ with $I\in\mathcal{I}^*$  and $\theta\in\mathbb{R}^d$ of the form $\theta=\vphi-t\omega(I)$, that is, for all $\theta\in\mathbb{R}^d$ for which $(\theta+t\omega(I),t) \in \mathcal{O}$ for some $t$. The function ${\widetilde{\mathcal{M}}^*}$ will be referred to as the  reduced Melnikov potential. The relation between the Melnikov potential and the reduced Melnikov potential will be explained in further detail in Section \ref{section:reduced}.

We will denote by $\partial {\widetilde{\mathcal{M}}^*}/\partial I$
the derivative of ${\widetilde{\mathcal{M}}^*}$ with respect to the
first variable, i.e.,
$\partial {\widetilde{\mathcal{M}}^*}/\partial I =\partial_1 {\widetilde{\mathcal{M}}^*}$, and by $\partial
{\widetilde{\mathcal{M}}^*}/\partial \theta$ the derivative of
${\widetilde{\mathcal{M}}^*}$ with respect to the second variable,
i.e., $\partial {\widetilde{\mathcal{M}}^*}/\partial \theta=\partial_2
{\widetilde{\mathcal{M}}^*}$.

The next assumption  is in terms of the derivative of the reduced Melnikov potential. An advantage of the reduced Melnikov potential in comparison to the un-reduced one is that the former is defined at points in the phase space of the system of rotators $\mathbb{R}^d\times\mathbb{T}^d$, while the latter is defined at points in the augmented phase space.  More precisely,   $\frac{\partial  {\widetilde{\mathcal{M}}^*}}{\partial \theta}$ is defined at the points $(I,\theta)\in \mathbb{R}^d\times\mathbb{T}^d$ where $I\in \mathcal{I}^*$ and $\theta=\vphi-t\omega(I)$ for some $t$.

\smallskip

\textbf{H4.} \it  The  reduced Melnikov potential associated to the homoclinic
orbit $(p^0(\tau),q^0(\tau))$ satisfies the following non-degeneracy
condition.

There exists an open ball $\mathcal{I}^{**}\subseteq \mathcal{I}^{*}$  such that for each  $(I,\vphi, t)\in\mathcal{I}^{**}\times\mathcal{O}$, we have   \begin{equation}\label{eqn:m_trans}
\frac{\partial{\widetilde{\mathcal{M}}^*}}{\partial\theta}(I,\vphi-t\omega(I))\neq 0.
\end{equation}
\rm

\subsection{Main results}

Now we present the main results of the paper.

\subsubsection{Generalities on normally hyperbolic invariant manifolds}
Let $\mathcal{I}\subseteq \mathbb{R}^d$ be an open ball,
and let
\begin{equation}
\label{eqn:lambda_00}
\Lambda_0=\{(I,\vphi)\,|\,I\in \textrm{cl}(\mathcal{I}),\,\vphi\in\mathbb{T}^d\},
\end{equation}
which is a compact manifold with boundary for the flow $\phi^t_0$.
In the augmented phase space, $\tilde\Lambda_0=\Lambda_0\times \real\subseteq \tilde{M}$ is a non-compact, normally hyperbolic invariant manifold with boundary for the  augmented  flow $\tilde\phi^t_0$. By our assumption \textbf{H1},  the vector fields $\X^0$ and $X^1$  are uniformly bounded and uniformly continuous, together with their derivatives.
We note that while the standard  theory of normally hyperbolic
manifolds requires the unperturbed manifold to be compact, in fact compactness only
enters into the arguments through the fact the vector fields are uniformly bounded and uniformly continuous.
Thus,  if one assumes instead of compactness only uniform continuity, the
standard results on the persistence of normally hyperbolic manifolds go through.

The papers \cite{Fenichel71, HirschPS77} established    that
if we perturb the original  $\tilde\Lambda_0$ survives as a \emph{normally hyperbolic locally  invariant manifold}
$\tilde\Lambda_\eps$ for all $\eps$ sufficiently small,  which can be parametrized over $\tilde\Lambda_0$ and which depends
differentiably on $\eps$.  There are, however, some subtleties
that we recall. More modern references are \cite{Pesin04, BatesLZ08}.

The proof of \cite{Fenichel71, Fenichel74} involves extending the vector field in a suitable way
and showing that the extended vector field has a unique invariant
manifold. This invariant manifold for the extension is a locally
invariant manifold for the original problem.

It is important to
remark that the invariant manifold for the extension may depend
on the extension chosen.
Hence, the locally invariant manifold $\tilde\Lambda_\eps$ for the original
problem is not unique.

If one chooses an  extension of the vector
field  which depends smoothly on
parameters, one
obtains a manifold that depends smoothly on parameters.
The exact degree of smoothness, depends on the rates of
growth of derivatives in the manifold \cite{Fenichel74}.
See \eqref{eqn:ratesdifferentiable} in  Appendix~\ref{sec:hyperbolicity}.

In our case, when the motion on the manifold is integrable,
one gets that the invariant manifold and its stable manifolds
has any fixed number of derivatives for
sufficiently small $\eps$ -- the smallness conditions depend
on the order required. So, general invariant manifolds may be
only finitely differentiable. See \cite{Fenichel74} for examples.
 Assuming that the motion on the manifold
is a rotation -- an assumption that may require adjusting parameters --
one can get analytic invariant manifolds \cite{CanadellH17a}.

In the main part of this paper, we will use the notation  $C^k$ regularity
for regularities that can be made as high as desired
by making $\eps$ large enough. We warn the reader that
the value of $k$ may change slightly from line to
line in some of the calculations (e.g. the regularity of
the bundles may be one less than that of the manifold or
the derivatives have one derivative less than the functions, etc.)

We note that one of the consequences of
the result is that, for systems like ours
in which the dynamics in the manifold is integrable,
the perturbation of the manifold is independent of
the extension chosen to all orders in $\eps$.  In more
general cases, the perturbation theory does depend on the
extension.

Similarly,   the Melnikov integrals \eqref{eqn:melnikov_int}
and \eqref{eqn:melnikov_vect} are expressions which only involve
the unperturbed system and they are independent of the extensions
used. They are independent of the family of locally invariant
manifolds considered.  The same happens to all orders in the
perturbation in $\eps$.

For more details on this section, see Appendix \ref{sec:extension}.

\subsubsection{Statement of results.}

\begin{prop}\label{prop:nhim}

Assume that the system \eqref{generalperturbation} satisfies the conditions
\textbf{H1} and \textbf{H2}.  Fix $k \in \mathbb{N}$.  Choose a
$C^k$ family of extended flows on $\tilde{M}$.

Then,  there
exists $\eps_0$ sufficiently small such that for each
$0<|\eps|<\eps_0$, there is a  unique $C^k$ family of manifolds
$\tilde\Lambda_\eps$ invariant and normally hyperbolic
for the extended  flow
$\tilde{\phi}^t_\eps$
(hence, locally invariant for the original flow).
\end{prop}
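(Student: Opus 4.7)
The plan is to verify that, for $\eps = 0$, the augmented manifold $\tilde\Lambda_0$ is a normally hyperbolic locally invariant manifold whose hyperbolicity constants are \emph{uniform} (in spite of non-compactness), and then invoke a Fenichel/Hirsch--Pugh--Shub-type persistence result adapted to the non-compact setting, applied after an extension of the vector field across the boundary $\partial \tilde\Lambda_0 = \partial \mathcal{I} \times \torus^d \times \real$.

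First, I would check normal hyperbolicity of $\tilde\Lambda_0$ under $\tilde\phi^t_0$. At each point of $\tilde\Lambda_0$, the linearization of $\X^0$ splits as the block diagonal sum of the pendulum blocks and the rotator-plus-time block. By hypothesis \textbf{H2}, each pendulum block at $(p_i,q_i) = (0,0)$ has eigenvalues $\pm\lambda_i$, yielding the stable and unstable subbundles with uniform rates $\lambda_+ = \min_i \lambda_i > 0$. The tangent directions $(I, \vphi, t)$ carry the integrable flow of $h_0$, whose derivative grows at most polynomially in $t$ along any trajectory; in particular, the expansion/contraction rates in the tangent directions are zero. This gives a clean rate gap, so the normal hyperbolicity constants $0 < \mu < \lambda_+$ required in the NHIM theorem are satisfied uniformly. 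The \textbf{H1.1} bounds on the derivatives of $V_i$ and $h_0$ guarantee that these splittings and rates are uniform over the non-compact manifold $\tilde\Lambda_0$.

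Next, since $\tilde\Lambda_0$ is a manifold with boundary and is non-compact in the $t$-direction, I would invoke the extension construction of Appendix~\ref{sec:extension}: one extends $\X^0 + \eps \X^1$ outside a neighborhood of $\D$ so that the resulting vector field $\tilde{\X}^0 + \eps \tilde{\X}^1$ agrees with the original on a neighborhood of $\tilde\Lambda_0$, is defined on all of $\tilde M$, and has $\tilde\Lambda_0$ as an \emph{overflowing} (or boundary-less after modification) invariant manifold. The extension is chosen to depend $C^k$-smoothly on $\eps$ and preserves the uniform bounds coming from \textbf{H1.2}. Because the $C^k$ norms of both $\tilde{\X}^0$ and $\tilde{\X}^1$ are uniformly bounded on $\tilde M$, the graph-transform / Perron fixed-point argument underlying the NHIM theorem carries over verbatim in the non-compact setting: the fixed-point operator on the space of bounded $C^k$ sections of the normal bundle is a contraction with uniform Lipschitz constant, and the rate conditions control the number of derivatives as in \eqref{eqn:ratesdifferentiable}. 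This yields, for $|\eps| < \eps_0$, a unique $C^k$ family of invariant manifolds $\tilde\Lambda_\eps$ for the extended flow, parametrized by $\tilde\Lambda_0$.

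Finally, local invariance of $\tilde\Lambda_\eps$ for the original flow $\tilde\phi^t_\eps$ follows because the extended and original vector fields coincide on a neighborhood of $\tilde\Lambda_\eps$ for $\eps$ small, so orbits starting on $\tilde\Lambda_\eps$ remain on it until they cross the lateral boundary $\partial \mathcal{I} \times \torus^d \times \real$. The main subtlety I expect to have to address carefully is precisely the non-compactness: compactness is standardly used to pass from pointwise to uniform hyperbolicity estimates, and here it must be replaced by the uniform $C^k$ boundedness of $\X^0, \X^1$ and the uniform spectral gap from \textbf{H2}. A secondary issue is the non-uniqueness of $\tilde\Lambda_\eps$, which is genuine but, as remarked in the text, does not affect the Melnikov integrals; I would simply record that uniqueness is meant relative to a fixed choice of $C^k$ extension.
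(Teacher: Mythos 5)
Your proposal is correct and follows essentially the same path as the paper's proof in Section~\ref{sec:reminder}: verify that $\tilde\Lambda_0$ is normally hyperbolic for $\tilde\phi^\sigma_0$ with the pendulum blocks supplying uniform rates $\pm\lambda_i$ and the $(I,\vphi,t)$-block carrying zero tangential rates, then extend the vector field as in Appendix~\ref{sec:extension} to deal with the boundary and non-compactness, and invoke Fenichel/HPS persistence with uniform $C^k$ bounds replacing compactness. The paper is a bit terser (citing the NHIM literature rather than sketching the graph-transform fixed point), but the structure of the argument and the resolution of the non-compactness and non-uniqueness issues are the same as what you wrote.
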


\begin{rem}\label{parameterization1}
As a consequence of the implicit function theorem,  there exists a
$C^{k}$-family of diffeomorphisms $\tilde \Xi_\eps:\tilde\Lambda_0\to
\tilde{M}$ such that
$\tilde\Lambda_\eps=\tilde\Xi_\eps(\tilde\Lambda_0)$.

Using the implicit function theorem, one can ensure that the
$\Xi_\eps$ has additional properties, such as
$\frac{d}{d\eps} \Xi_\eps|_{\eps = 0} \in \textrm{Span}( E^\st \oplus E^\un)$.
\end{rem}

The following result shows that the Melnikov potential
\eqref{eqn:melnikov_int} is well defined, and that its non-degenerate
critical points correspond to transverse intersections of the stable
and unstable manifolds of $\tilde\Lambda_\eps$ for the extended flow.
We emphasize that one of the conclusion is that, for the models
we are considering, the conclusions are given by the formula
\eqref{eqn:melnikov_int} and, in particular, the Melnikov function
does not depend on  the extensions of the flow used in  the construction
of the locally invariant manifolds.

\begin{prop}\label{prop:melnikov_int}
Assume that the system \eqref{generalperturbation}   satisfies the conditions \textbf{H1},
\textbf{H2}.   Then,  the integral \eqref{eqn:melnikov_vect}
in the general case (resp., $\tM$ in the Hamiltonian case) is convergent
and, moreover, the vector  $\Mv$ (resp., $\tM$) is a $C^k$
function of all of its arguments.

In addition, if  we assume \textbf{H3},
there exists $\eps_0$ sufficiently small such that,
for each $0<|\eps|<\eps_0$, the stable and unstable manifolds  of $\tilde\Lambda_\eps$, $W^s(\tilde\Lambda_\eps)$ and $W^u(\tilde\Lambda_\eps)$, respectively,  intersect transversally,  in the augmented  phase space $\tilde{M}$, along
a homoclinic manifold $\tilde\Gamma_\eps$, which can be parametrized by
\begin{equation}\label{eqn:gamma_eps}\tilde\Gamma_\eps =\{{\tilde x}={\tilde x}(\tau^*(I,\vphi,t)),I,\vphi,t)\,|\,(I,\vphi,t)\in\mathcal{I}^*\times\mathcal{O}\}.
\end{equation}
\end{prop}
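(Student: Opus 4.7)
The plan divides naturally into two parts corresponding to the two assertions.

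\textbf{Convergence and $C^k$-smoothness.} The first step is to exploit assumption \textbf{H2}: each pendulum separatrix satisfies $|(p_i^0(\sigma), q_i^0(\sigma))| = O(e^{-\lambda_+|\sigma|})$ as $\sigma \to \pm\infty$, with analogous decay for its derivatives with respect to the phase $\tau_i$. The integrand in \eqref{eqn:melnikov_vect} is the difference of $(\X^1 P_i)$ evaluated at two points of $\D$ whose $(p,q)$-coordinates are $O(e^{-\lambda_+|\sigma|})$ apart while the remaining coordinates coincide. Since $\X^1$ is $C^{k+1}$-bounded on $\D$ by \textbf{H1.2}, the mean value theorem immediately gives $|\text{integrand}| \le C e^{-\lambda_+|\sigma|}$ uniformly on compact subsets of parameters, hence absolute convergence. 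Differentiating under the integral in any of $\tau, I, \vphi, t$ produces expressions of the same structure: the new factors are either bounded derivatives of $\X^1$, derivatives of the separatrix (which satisfy the same exponential bound in the $\tau$-direction), or bounded factors involving $\omega(I)$ and its derivatives on $\mathcal{I}^*$. A dominated-convergence argument then justifies differentiation under the integral up to order $k$, giving $C^k$-smoothness of $\Mv$ (and identically for $\tM$).

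\textbf{Splitting formula via pendulum energies.} The core observation is $\X^0 P_i = \{P_i, H_0\} = 0$ for each $i$, since the unperturbed Hamiltonian is a decoupled sum and $P_i$ Poisson-commutes with every other summand. Hence along any perturbed trajectory $\tilde x(\sigma;\eps)$ of \eqref{generalperturbation} one has $\frac{d}{d\sigma} P_i(\tilde x(\sigma;\eps)) = \eps\,(\X^1 P_i)(\tilde x(\sigma;\eps);\eps)$. For each unperturbed homoclinic point $\tilde x^0 = (p^0_\tau(0), q^0_\tau(0), I, \vphi, t)$ with $(\tau,I,\vphi,t) \in \mathbb{R}^n \times \mathcal{I}^* \times \mathcal{O}$, I use Proposition~\ref{prop:nhim} and the $C^k$ stable/unstable foliations of $\tilde\Lambda_\eps$ (Appendix~\ref{sec:hyperbolicity}) to select two perturbed trajectories $\tilde x^{\un}_\eps(\sigma)$ on $W^{\un}(\tilde\Lambda_\eps)$ and $\tilde x^{\st}_\eps(\sigma)$ on $W^{\st}(\tilde\Lambda_\eps)$ that meet a common transverse section at $\sigma=0$, depend $C^k$ on $\eps$, and reduce to the unperturbed homoclinic at $\eps=0$. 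Integrating the above identity from $-\infty$ to $0$ along $\tilde x^{\un}_\eps$ and from $0$ to $+\infty$ along $\tilde x^{\st}_\eps$, and subtracting the asymptotic contributions coming from the base orbits on $\tilde\Lambda_\eps$ (on which $P_i$ equals its saddle value up to $O(\eps)$), yields
\[
P_i(\tilde x^{\un}_\eps(0)) - P_i(\tilde x^{\st}_\eps(0)) = \eps\,\Mv_i(\tau, I, \vphi, t) + O(\eps^2),
\]
uniformly on compact subsets of $\mathbb{R}^n \times \mathcal{I}^* \times \mathcal{O}$.

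\textbf{Transverse intersection.} The $n$-vector on the left vanishes exactly when the two perturbed points coincide, because the pendulum energies $P_1,\ldots,P_n$ separate the leaves of the unperturbed stable/unstable foliations in the $n$ homoclinic directions transverse to both $W^{\un}(\tilde\Lambda_0)$ and $W^{\st}(\tilde\Lambda_0)$. Dividing by $\eps$ yields a $C^k$ map $F(\tau, I, \vphi, t;\eps) = \Mv(\tau, I, \vphi, t) + \eps R(\tau, I, \vphi, t;\eps)$ whose zero set at $\eps = 0$ is, by \textbf{H3}, locally the graph $\tau = \tau^*(I, \vphi, t)$ with $D_\tau \Mv(\tau^*)$ invertible. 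The implicit function theorem produces, for each $0 < |\eps| < \eps_0$, a $C^k$ graph $\tau = \tau_\eps(I, \vphi, t)$ of zeros of $F$, which is $O(\eps)$-close to $\tau^*$; the resulting homoclinic manifold $\tilde\Gamma_\eps$ is $(2d+1)$-dimensional and is parametrized as in \eqref{eqn:gamma_eps}. The invertibility of $D_\tau \Mv$, combined with the separating property of the $P_i$'s, upgrades the intersection to a transverse one, i.e. $T W^{\un}(\tilde\Lambda_\eps) + T W^{\st}(\tilde\Lambda_\eps) = T\tilde M$ at each point of $\tilde\Gamma_\eps$.

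\textbf{Main obstacle.} The delicate step is the uniformity of the $O(\eps^2)$ remainder across the non-compact time interval. A naive Gronwall estimate on $|\tilde x^{\un/\st}_\eps(\sigma) - \tilde x^0(\sigma)|$ grows exponentially in $|\sigma|$, which would destroy integrability of the remainder. The correct estimate uses that $\tilde x^{\un/\st}_\eps(\sigma)$ lies on $W^{\un/\st}(\tilde\Lambda_\eps)$, so its distance to the unperturbed homoclinic is controlled by the $C^k$-dependence of the invariant manifold on $\eps$ together with the contraction rates of the unperturbed foliation, yielding $|\tilde x^{\un/\st}_\eps(\sigma) - \tilde x^0(\sigma)| \le C\eps\, e^{-\lambda_+|\sigma|/2}$ on the appropriate half-line. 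Combined with the exponential bound on the integrand from Part~1, this is what makes the remainder genuinely $O(\eps^2)$ uniformly in parameters, and hence makes the implicit function theorem applicable.
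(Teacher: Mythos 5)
Your overall strategy is the same as the paper's: represent $W^{\st,\un}(\tilde\Lambda_\eps)$ as graphs over the slow coordinates near the separatrix, exploit $\X^0 P_i = 0$ and the fundamental theorem of calculus to write the vertical splitting as an integral of $\eps\,\X^1 P_i$ along orbits, subtract the asymptotic contribution, and finish with the implicit function theorem using \textbf{H3}. Part~1 (convergence and $C^k$ smoothness of $\Mv$) is correct in substance, although you should note that \textbf{H1.2} gives $\X^1 \in C^k$ (not $C^{k+1}$; the $C^{k+1}$ bound is for $h$), and that differentiating in $I$ through $\vphi + \sigma\omega(I)$ introduces polynomial factors in $\sigma$, which are harmless against the exponential decay but must be acknowledged when iterating to order $k$. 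Your final transversality step is sound in spirit and matches the paper.

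However, the resolution you offer for what you yourself identify as the ``main obstacle'' contains a genuine error. You claim
\[
\bigl|\tilde x^{\un/\st}_\eps(\sigma) - \tilde x^0(\sigma)\bigr| \le C\eps\, e^{-\lambda_+ |\sigma|/2}
\]
on the appropriate half-line. This is false: as $\sigma \to \pm\infty$, the perturbed trajectory $\tilde x^{\st}_\eps(\sigma)$ converges to an orbit on $\tilde\Lambda_\eps$, while the unperturbed homoclinic $\tilde x^0(\sigma)$ converges to the corresponding orbit on $\tilde\Lambda_0$; these two NHIMs, and hence the limit orbits, are a distance $O(\eps)$ apart. The difference therefore saturates at $O(\eps)$ rather than decaying, and the same persists even after restricting attention to the slow ($I$) components, which enter the argument of $\X^1 P_i$. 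Consequently the integrand difference
\[
\bigl[(\X^1 P_i)(\tilde\phi^\sigma_\eps \tilde x) - (\X^1 P_i)(\tilde\phi^\sigma_\eps \tilde\Omega^{\st}_\eps \tilde x)\bigr]
- \bigl[(\X^1 P_i)(\tilde\phi^\sigma_0 \tilde x^0) - (\X^1 P_i)(\tilde\phi^\sigma_0 \tilde\Omega^{\st}_0 \tilde x^0)\bigr]
\]
cannot be bounded by $C\eps\,e^{-\lambda_+\sigma/2}$ uniformly in $\sigma$. The fix, as in the paper, is to split time: integrate only up to $T = c\log(1/\eps)$; on $[0,T]$ a Gronwall estimate gives an $O(\eps)$ comparison between perturbed and unperturbed integrands, accumulating to $O(\eps^2\log\eps)$; for $\sigma > T$ each bracket separately is already $O(\eps^{2})$ by the exponential decay inherent in the wave-map characterization, so both the boundary term and the tail are $O(\eps^2)$. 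This yields a total remainder $O(\eps^{1+\varrho})$ for any $\varrho<1$, which is what the implicit function theorem needs. Without this time-splitting your remainder estimate does not close, and the implicit function theorem step is unjustified as written.
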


The following statement shows that, assuming that the perturbation is
Hamiltonian as in \eqref{eqn:h}, if the reduced Melnikov potential
additionally satisfies the non-degeneracy assumption \textbf{H4}, then
there exists homoclinic orbits along which the action variable changes
by $O(\eps)$.

\begin{thm}\label{prop:melnikov_jump}
Assume that the system \eqref{generalperturbation}   satisfies the conditions \textbf{H1}, \textbf{H2},
\textbf{H3},   the perturbation is Hamiltonian  as in  \eqref{eqn:h}, and that $\tilde{\phi}^t_\eps$ is an extended flow.

Let $\eps_0>0$ and the homoclinic manifold
$\Gamma_\eps$ be as Proposition \ref{prop:melnikov_int}, for
$0<|\eps|<\eps_0$.  Then for each point
${\tilde{x}}\in\tilde\Gamma_\eps$ there exists a uniquely defined pair of
points ${\tilde x}^-,{\tilde x}^+\in\tilde\Lambda_\eps$, such that
\begin{equation}\label{homoclinic}
d(\tilde\phi^t_\eps({\tilde x}),\tilde\phi^t_\eps({\tilde x}^\pm))<C_1
e^{\lambda_+ |t|}\, \textrm{ for } t\to\pm\infty,
\end{equation}
for some constant $C_1$  independent of $\epsilon$.

We also have that
\begin{equation}\label{eqn:Delta_I_theta}\begin{split}
I(\tilde{x}^+)  - I(\tilde{x}^-)  &= -\eps \partial_\theta {\widetilde{\mathcal{M}}^*}(I,\varphi-t\omega(I)) + O(\eps^2),
\end{split}\end{equation}
where $\tilde{x}^-=(I,\phi,t)$.

Hence, in particular, if we have \textbf{H4},  then we conclude
that for an open set of perturbations
 we have $|I(\tilde{x}^+) - I(\tilde{x}^-) | \ge C_3 |\eps|$,
for some constant $C_3$  independent of $\epsilon$.
\end{thm}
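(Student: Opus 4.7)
The plan is to combine two ingredients: (i) the wave–map (holonomy) structure coming from the normal hyperbolicity of $\tilde\Lambda_\eps$, which yields the points $\tilde x^\pm$ and the asymptotic bound \eqref{homoclinic}; and (ii) the Hamilton equation $\dot I = -\eps\, \partial_\vphi h$ on the augmented phase space, which lets me integrate the change of $I$ along the heteroclinic/asymptotic triple of orbits and pass to the Melnikov integral.

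First I would use NHIM theory (as recorded in Appendix~\ref{sec:hyperbolicity}) to note that a neighborhood of $\tilde\Lambda_\eps$ is foliated by the stable leaves $\{W^s_\eps(\tilde y)\}_{\tilde y\in\tilde\Lambda_\eps}$ and the unstable leaves $\{W^u_\eps(\tilde y)\}_{\tilde y\in\tilde\Lambda_\eps}$. Because $\tilde\Gamma_\eps\subset W^s(\tilde\Lambda_\eps)\cap W^u(\tilde\Lambda_\eps)$, each $\tilde x\in\tilde\Gamma_\eps$ determines unique base points $\tilde x^+$ and $\tilde x^-$ with $\tilde x\in W^s_\eps(\tilde x^+)\cap W^u_\eps(\tilde x^-)$; the estimate \eqref{homoclinic} with rate $\lambda_+$ is exactly the standard contraction estimate along the stable/unstable bundles, the rate $\lambda_+=\min_i\lambda_i$ being the minimum hyperbolicity rate coming from \textbf{H2}, and $C_1$ is uniform in $\eps$ since the splitting is $C^k$ in $\eps$.

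For the action change, I would introduce a large cutoff $T>0$ and split
\begin{equation*}
I(\tilde x^+)-I(\tilde x^-)
= [I(\tilde x^+)-I(\tilde\phi^{T}_\eps \tilde x^+)]
+[I(\tilde\phi^{T}_\eps \tilde x^+)-I(\tilde\phi^{-T}_\eps \tilde x^-)]
+[I(\tilde\phi^{-T}_\eps \tilde x^-)-I(\tilde x^-)].
\end{equation*}
The three brackets are each rewritten via $\dot I=-\eps\,\partial_\vphi h$. In the middle bracket, \eqref{homoclinic} lets me replace $\tilde\phi^{\pm T}_\eps \tilde x^\pm$ by $\tilde\phi^{\pm T}_\eps \tilde x$ at exponentially small cost $O(e^{-\lambda_+T})$ and then express the difference as $-\eps\int_{-T}^{T}\partial_\vphi h(\tilde\phi^t_\eps \tilde x)\,dt$. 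Regrouping and letting $T\to\infty$ gives the absolutely convergent formula
\begin{equation*}
I(\tilde x^+)-I(\tilde x^-)
= -\eps\!\int_{-\infty}^{0}\!\bigl[\partial_\vphi h(\tilde\phi^t_\eps \tilde x)-\partial_\vphi h(\tilde\phi^t_\eps \tilde x^-)\bigr]dt
  -\eps\!\int_{0}^{\infty}\!\bigl[\partial_\vphi h(\tilde\phi^t_\eps \tilde x)-\partial_\vphi h(\tilde\phi^t_\eps \tilde x^+)\bigr]dt.
\end{equation*}
Convergence is uniform in small $\eps$ because the integrand vanishes exponentially at rate $\lambda_+$ by \eqref{homoclinic} and by the $C^1$ bound on $h$ from \textbf{H1.2}.

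The leading order is then obtained by evaluating the $\eps$-independent integrand at $\eps=0$: the orbit $\tilde\phi^t_0\tilde x$ becomes the explicit homoclinic $(p^0(\tau^*+t\bar 1),q^0(\tau^*+t\bar 1),I,\vphi+t\omega(I),t+\textrm{const})$ on $\tilde\Gamma_0$ described in \eqref{eqn:gamma_eps}, while $\tilde\phi^t_0\tilde x^\pm$ reduces to the unperturbed orbit on $\tilde\Lambda_0$ with $p=q=0$. Plugging in, the two integrals merge into a single integral from $-\infty$ to $\infty$, which after moving $\partial_\vphi$ outside (justified by the exponential decay in Proposition \ref{prop:melnikov_int}) is exactly $\eps\,\partial_\vphi \tM(\tau^*,I,\vphi,t)+O(\eps^2)$. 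Translating to the reduced Melnikov potential via the change of variables $\theta=\vphi-t\omega(I)$ discussed around \eqref{eqn:m_reduced} yields \eqref{eqn:Delta_I_theta}. The non‑degeneracy \textbf{H4} then bounds $|\partial_\theta\widetilde{\mathcal M}^*|\geq C>0$ on $\mathcal I^{**}$, and the $O(\eps^2)$ remainder is dominated by the $O(\eps)$ main term for $|\eps|$ small, giving the lower bound $|I(\tilde x^+)-I(\tilde x^-)|\geq C_3|\eps|$.

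The main technical obstacle is justifying the interchange of the limit $T\to\infty$ with the expansion in $\eps$ uniformly along the whole orbit: one must control simultaneously (a) the $O(\eps)$ displacement of $\tilde\Lambda_\eps$ and of the fibers $W^{s,u}_\eps$ from their unperturbed counterparts, (b) the growth of orbits on $\tilde\Lambda_\eps$ where $\dot I=O(\eps)$, and (c) the exponential contraction along the stable/unstable directions; these have to combine so that the error from replacing perturbed quantities by unperturbed ones in the integrands is truly $O(\eps)$ pointwise and integrably small. The key that makes this work is that all the estimates of Proposition \ref{prop:melnikov_int} (convergence of the integrand and of its derivatives, $C^k$-regular dependence on parameters and on $\eps$) are uniform in $\eps$, which lets me treat the leading-order integral as a $C^k$ function of $\eps$ and invoke Taylor's theorem.
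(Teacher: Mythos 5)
Your argument is essentially the same as the paper's proof: the paper likewise integrates $\dot I = -\eps\,\partial_\vphi h$ along the homoclinic orbit via the fundamental theorem of calculus, compares with the forward and backward asymptotic orbits on $\tilde\Lambda_\eps$, replaces the perturbed dynamics by the unperturbed one over a time window of length $O(|\log|\eps||)$ using Gronwall, and then passes to the reduced Melnikov potential; your finite-cutoff-$T$ bookkeeping is just a slight repackaging of the paper's two one-sided improper integrals \eqref{firstorderscattering1}--\eqref{firstorderscattering2}. One small caveat: with the wave-map convention $\tilde x^+=\tilde\Omega^\st_\eps(\tilde x)$, $\tilde x^-=\tilde\Omega^\un_\eps(\tilde x)$, the honest computation (which you carry out correctly) gives $I(\tilde x^+)-I(\tilde x^-)=+\eps\,\partial_\vphi\tM(\tau^*,\cdot)+O(\eps^2)$, which differs in sign from the statement \eqref{eqn:Delta_I_theta}; this reflects an internal sign/labeling inconsistency in the paper (the published proof also swaps $\Omega^\st$ and $\Omega^\un$ when introducing $\tilde x^\pm$), and it does not affect the lower bound $|I(\tilde x^+)-I(\tilde x^-)|\geq C_3|\eps|$.
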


We point out that, in the above, the points $\tilde{x}^+$, $\tilde{x}^-$ depend
on the  locally invariant manifolds
considered, but the change in $I$- and $\theta$-coordinates from $\tilde{x}^-$ to $\tilde{x}^+$, as in \eqref{eqn:Delta_I_theta}, does not.

We note that the assumptions  of the previous results are very concrete
and can be verified in concrete systems of interest. In the following
result, we show that, when the perturbation is Hamiltonian and periodic,
the above assumptions are $C^k$-generic (indeed somewhat more
than that).

\begin{prop}\label{prop:genericity}
Assume that the perturbation is Hamiltonian  as in  \eqref{eqn:h}, and periodic, i.e., $t\in\torus$.
Then, each of the  non-degeneracy conditions \textbf{H3} and \textbf{H4} on the
Melnikov potential is $C^3$-open and $C^\infty$-dense for $h$.
\end{prop}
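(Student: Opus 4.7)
The plan is to prove the $C^3$-openness and $C^\infty$-density of each condition separately, exploiting the linearity of the map $h \mapsto \tM_h$ defined by the integral \eqref{eqn:melnikov_int} and the flexibility afforded by compactly supported $C^\infty$ perturbations. These two properties (openness and density in different topologies) are conceptually independent and need different tools.

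For openness, the starting point is that, by Proposition \ref{prop:melnikov_int} together with dominated convergence applied to the exponentially convergent integrand, the map $h \mapsto \tM_h$ is linear and continuous from $C^{k+1}$-bounded perturbations into $C^k$ on compacts. Hence a $C^3$-small perturbation of $h$ yields a $C^2$-small perturbation of $\tM_h$, and after passing through the implicitly defined $\tau^*$ (losing one derivative), a $C^1$-small perturbation of $\widetilde{\mathcal{M}}^*_h$. A non-degenerate critical point of $\tau \mapsto \tM_h(\tau,I,\vphi,t)$ is a $C^2$-stable feature, by the implicit function theorem applied to $\nabla_\tau \tM = 0$, and the non-vanishing of $\partial_\theta \widetilde{\mathcal{M}}^*$ on an open set is $C^1$-stable. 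So both \textbf{H3} and \textbf{H4} are $C^3$-open.

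For density, given any $h$ and $\delta > 0$, I would exhibit $g$ with $\|g\|_{C^\infty} < \delta$ so that $h+g$ satisfies \textbf{H3} (resp.\ \textbf{H4}). Fix a reference point $(\tau_0, I_0, \vphi_0, t_0)$ in the admissible region, and reduce matters to a realization lemma: the linear map sending $g \in C^\infty$ to the $2$-jet in $\tau$ of $\tM_g$ at $(\tau_0, I_0, \vphi_0, t_0)$,
\begin{equation*}
g \longmapsto \bigl( \tM_g,\ \nabla_\tau \tM_g,\ \nabla^2_\tau \tM_g \bigr)\big|_{(\tau_0,I_0,\vphi_0,t_0)} \in \mathbb{R} \oplus \mathbb{R}^n \oplus \mathrm{Sym}^2(\mathbb{R}^n),
\end{equation*}
is surjective. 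I would prove this by taking $g = \sum_\ell a_\ell \chi_\ell$, a finite linear combination of smooth bumps $\chi_\ell$ supported in small disjoint neighborhoods of well-chosen base points $(p^0(\tau_0 + \sigma_\ell \bar 1), q^0(\tau_0 + \sigma_\ell \bar 1), I_0, \vphi_0 + \sigma_\ell \omega(I_0), t_0 + \sigma_\ell \bmod 1)$ on the homoclinic orbit, chosen in the interior of the homoclinic where $(\dot p_i^0, \dot q_i^0) \neq 0$. The integral \eqref{eqn:melnikov_int} then localizes to disjoint $\sigma$-intervals, so the contributions add linearly and, by adjusting amplitudes and the shape of the $\chi_\ell$, any target $2$-jet is realizable. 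Granting surjectivity, one cancels $\nabla_\tau \tM_h(\tau_0,\ldots)$ and adds a definite Hessian to produce a non-degenerate critical point at $\tau_0$; the implicit function theorem then extends this to an open set of $(I,\vphi,t)$, giving \textbf{H3}. Density of \textbf{H4} is analogous and strictly easier, since it only requires $\widetilde{\mathcal{M}}^*_{h+g}$ to have non-zero $\theta$-gradient at a single point.

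The main obstacle is establishing the realization lemma. The delicate point is checking that bumps at distinct $\sigma_\ell$ furnish linearly independent contributions to the $2$-jet, which requires tracking how the homoclinic parametrization and the rotator phase $\vphi + \sigma\omega(I)$ enter \eqref{eqn:melnikov_int} under differentiation in $\tau$, $I$, $\vphi$. The periodicity of $h$ in $t$ is used crucially: it permits the choice of base points at every $t_0 + \sigma_\ell \bmod 1$ and ensures that a $C^\infty$-small perturbation $g$ on $M \times \torus$ is genuinely small globally. The identity $\Mv_i = \partial_{\tau_i}\tM$ from Remark \ref{rem:poisson_integral} converts $\tau$-derivatives into Poisson-bracket integrals and makes the dimension count for the spanning argument transparent.
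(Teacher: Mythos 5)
Your openness argument is fine and matches the paper's brief ``standard stability arguments.'' The density argument, however, has a genuine gap, and the paper's actual strategy is quite different from the realization-lemma approach you propose.

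The gap is in the step ``one cancels $\nabla_\tau \widetilde{\mathcal{M}}_h(\tau_0,\ldots)$'' by a $C^\infty$-small perturbation $g$. The map $g \mapsto \nabla_\tau \widetilde{\mathcal{M}}_g(\tau_0,\ldots)$ is linear and bounded (say, from $C^{k+1}$ into $\mathbb{R}^n$), so achieving a target $-\nabla_\tau \widetilde{\mathcal{M}}_h(\tau_0,\ldots)$ requires $g$ of magnitude comparable to that target. If the reference point $(\tau_0,I_0,\vphi_0,t_0)$ is chosen arbitrarily in the admissible region, there is no reason for $\nabla_\tau \widetilde{\mathcal{M}}_h$ to be small there, and the Melnikov potential does not decay in $\tau$ (for $n=1$, for instance, $\widetilde{\mathcal{M}}(\tau_1,I,\vphi,t)= \widetilde{\mathcal{M}}(0,I,\vphi-\tau_1\omega(I),t-\tau_1)$ is merely a shift of a bounded function on $\torus^d\times\torus$). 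So the realization lemma, even granting its surjectivity, gives you \emph{some} $g$ producing a nondegenerate critical point, but not a \emph{small} one, which is what density demands.

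What the paper does to close this is precisely the part your plan omits: it first establishes that, for generic $h$ (more precisely, whenever $\widetilde{\mathcal{M}}$ is not locally constant in $(\vphi,t)$, which is itself a $C^\infty$-dense condition), the unperturbed Melnikov potential \emph{already possesses} a critical point in $\tau$. This is the content of Lemmas \ref{lem:gen1}--\ref{lem:gen3}. Lemma \ref{lem:gen1} treats the single-pendulum case using the periodicity/recurrence of the linear flow $\varsigma\mapsto(\vphi-\varsigma\omega(I),t-\varsigma)$ on $\torus^d\times\torus$: for rational $\omega(I)$ the potential is literally periodic in $\varsigma$ and attains extrema on a fundamental domain; for irrational $\omega(I)$ one still gets, by density of the orbit, a sequence of intervals on which the boundary values are strictly below an interior value, forcing interior critical points. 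Lemma \ref{lem:gen2} localizes in $\tau$: for widely separated $\tau_i$ the full potential is close to the sum of single-pendulum potentials, because each pendulum's homoclinic excursion is temporally localized. Lemma \ref{lem:gen3} combines these to build a compact box $J^1_{j_1}\times\cdots\times J^n_{j_n}$ on whose boundary $\widetilde{\mathcal{M}}$ is strictly below a value attained inside, guaranteeing a critical point. Only \emph{then} does one apply a small $C^\infty$ perturbation to upgrade a critical point to a nondegenerate one. Your realization lemma (which is correct and could, with care, replace that final upgrade step) is not a substitute for the existence argument, and the periodicity hypothesis on $h$, which you invoke merely to make ``globally small'' meaningful, is actually doing the essential work of producing recurrence in $\tau$ that forces critical points to exist.
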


In Section~\ref{sec:scatteringmap},  we will describe an
alternative approach  to Theorem~\ref{prop:melnikov_jump}, based
on studying the scattering map.  The scattering map
$\tilde{S}_\eps : H_- \subset \tilde{\Lambda}_\eps \rightarrow \tilde{\Lambda}_\eps$
is defined  \cite{DelshamsLS00}
as $\tilde{S}_\eps(\tilde{x}^-) = \tilde{x}^+$, where $x^\pm$ are in
Theorem~\ref{prop:melnikov_jump}. The scattering map -- which
depends strongly on the $\Gamma_\eps$ chosen has remarkable
properties. In particular,   is  symplectic (when defined on the augmented symplectic phase space $\tilde{M}\times\real$, with symplectic form $\Upsilon+dA\wedge dt $), and  depends  smoothly
on parameters (in a suitable sense, since it defined on
a different manifold for every $\eps$).
Hence its first order effect is described
by a Hamiltonian.  It is quite remarkable that the
Hamiltonian describing in first order the effect of
the scattering map is precisely the Melnikov function
\eqref{eqn:melnikov_int}.


\begin{rem} \label{pseudorbits}
Because of \eqref{homoclinic}, we
note that, for large enough $T$,\break
$\{\tilde{\phi}^t_\eps(\tilde {x}^-)\}_{t \in ( -\infty, T']} $ and
$\{\tilde{\phi}^t_\eps(\tilde {x}^+)\}_{t \in [ T, \infty )} $
are very close to an orbit of  the full system, i.e.,
the orbit of the homoclinic of $\tilde{x}$.

We can consider that the orbits determined
by the $\{\tilde{\phi}^{-T}_\eps(x^-)\}_{t \in ( -\infty, T']}$ in the past and
by the $\{\tilde{\phi}^{T}_\eps(x^+)\}_{t \in [ T, \infty)}$  are pseudo-orbits of the
full system even if they are orbits in $\Lambda_\eps$.

Hence, we can consider  compositions of maps of the form
$F(\tilde{x}) = \tilde{\phi}^{T}_\eps\circ \tilde{S}_\eps \circ \tilde{\phi}^{T'}_\eps(\tilde{x}) $
as pseudo-orbits of the full system, even if the maps considered
are defined only on the NHIM.
Indeed, the main result of \cite{GideaLlaveSeara14} is a shadowing theorem
for pseudo-orbits of this type. If $\tilde\phi^t$ has good recurrence
properties, we can choose $T$, $T'$ so that the pseudo-orbits generated by maps as above are shadowed by true orbits of the full system. See \cite{GideaLlaveSeara14}.

There are some advantages to this construction. One is that we can
analyze orbits of the full system by analyzing pseudo-orbits in the
invariant manifold. More importantly, we can use several scattering
maps to produce pseudo-orbits.
\end{rem}

\subsection{Comparison  with  related works}

The Melnikov method takes its name from \cite{Melnikov63}
which considered the effect of perturbations on families of periodic orbits.
Some precedents can be found  in \cite[Volume 3]{Poincare99}.

One of the first papers which applied the Melnikov method to
perturbations of integrable systems is
\cite{HolmesM82}; the main example treated there is that of a pendulum
coupled with several oscillators subject to a periodic
perturbation.  The paper
\cite{HolmesM82} includes (not explicitly)  the
assumption that the perturbation
vanishes on the periodic orbit.   The paper  \cite{Robinson88} points
out that the Melnikov integrals in \cite{HolmesM82} are only
conditionally convergent {unless the perturbation
vanishes on the periodic orbit considered since
that the treatment of \cite{HolmesM82} does
not take into account the changes on the hyperbolic orbit induced
by the perturbation and assumes implicitly that the
perturbation vanishes on  the periodic orbit.
Furthermore,
\cite{Robinson88} shows  how to change the limits of
integration so that the result has the correct dynamic meaning.
The method of \cite{Robinson88}, works only for
the one degree of freedom case.

The well known books \cite{Wiggins90, Wiggins93} derive a
Melnikov theory for quasi-periodic orbits and quasi-periodic
perturbations by adopting a geometric point of
view.   The book  \cite{Wiggins90} does
not include explicitly the assumption that the
perturbation vanishes on quasi-periodic orbits,
but it realizes that, if they do not,
the main term in the formula is indefinite integrals
with quasiperiodic integrands
\cite[(4.2.85)]{Wiggins90}.  In \cite[p. 454]{Wiggins90}
it is argued that the integrals
do converge if one considers subsequences of times.
Nevertheless the zeroes of the Melnikov integral obtained depend on the arbitrary choice
of those sequences,  which is  difficult to give a concrete physical
interpretation.

The Melnikov theory for
quasi-periodic orbits under general
perturbations has the extra  difficulty that
 if the perturbation does
not vanish on them, there is no reason why they persist (as quasi-periodic
orbits).   In this
paper we obviate this problem by considering normally hyperbolic
manifolds which do persist. Even if the perturbation changes the
nature of the asymptotic orbits, we can still find the
asymptotic orbits and quantify the change. Of course, to obtain
correct results, one
still needs to take into account the fact that the normally hyperbolic
manifold itself changes.

A Melnikov potential whose critical points give rise to transverse homoclinic intersections in perturbation of periodic points
is introduced in \cite{DelshamsRamirezRos1996,DelshamsRamirezRos1997}.
In \cite{DelshamsG00,DelshamsG00a} the Melnikov potential
is established in a situation
where the hyperbolic part consists of a single pendulum.
A geometric version of the Melnikov integrals for periodic points
with codimension one  manifolds is studied in \cite{LomeliMR08}.
The paper\cite{LomeliMR08} assumes  explicitly
that the perturbations vanish on
the manifold.

Some other related papers  are
\cite{BaldomaFontich1998,LomeliMeiss2000,DelshamsLS00,DelshamsLS03a,DelshamsLS06b,Roy2006,LomeliMR08,DLS_Multi,GideaLlaveSeara14}.

\section{Proofs of the results}\label{section:theproofs}
\subsection{Proof of Proposition \ref{prop:nhim}}\label{sec:reminder}

The goal of this section is to apply some well known facts
from the theory of normally hyperbolic invariant manifolds
(persistence of invariant objects as well as some geometric
regularity properties) to analyze some of
the geometric objects present in the dynamics of the system
described by \eqref{generalperturbation}. The
structure of our problem makes it possible to obtain somewhat
sharper results than those afforded by the general theory. We will
also introduce some parametrization maps and systems of
coordinates.

\subsubsection{Persistence of the normally hyperbolic invariant manifold}
\label{subsection:persistence}
The perturbed equations are non-autonomous.
It is standard to make them
autonomous by considering an extra variable. A
precise way to do this is by renaming the independent variable to
$\sigma$ and adding an extra equation $\dot t=1$, where the
derivative is taken with respect to the time $\sigma$.
In the case of general perturbations take as the  augmented system
\begin{equation}\label{generalperturbation_t}
\begin{split}
& \dot x = \X^0(x) + \eps \X^1(x,t;\eps), \\
& \dot t  = 1.\\
\end{split}
\end{equation}
In the Hamiltonian perturbation case, we obtain:
\begin{eqnarray}\label{eqn:hamilton} \displaystyle \dot
p =&\displaystyle-\frac{\partial P}{\partial q}
-\eps\frac{\partial h}{\partial q}, \quad \displaystyle & \dot q
=\frac{\partial P}{\partial p}
+\eps\frac{\partial h}{\partial p},\\
\nonumber \displaystyle \dot I  =&\displaystyle-\eps\frac{\partial
h}{\partial \vphi}, \displaystyle\quad & \dot \vphi =\frac{\partial
h_0}{\partial I} +\eps\frac{\partial h}{\partial
I},\\\nonumber\displaystyle \dot t =&\displaystyle 1.  &
\end{eqnarray}

We  will always denote the independent variable by $\sigma$ from now
on. We will denote by $\tilde{\phi}^\sigma_\eps$ the flow on $\tilde
M=\mathbb{R}^n\times\mathbb{T}^n\times
\mathbb{R}^{d}\times\mathbb{T}^{d}\times \real$  generated by this
system. Since by \textbf{H1} we have that $V_i$, $h_0$ are
$C^{k+1}$-smooth and $\X^1$, in the general case, is
$C^{k}$-smooth (resp, $h$, in the Hamiltonian case, is $C^{k+1}$-smooth), with $k\geq 2$, the right-hand sides of the
equations \eqref{generalperturbation_t} and \eqref{eqn:hamilton} are $C^{k}$-smooth.

For the unperturbed system corresponding to $\eps=0$, we consider
the manifold with boundary
\[
\tLambda_0 = \{p = 0,\, q = 0,\, I \in \textrm{cl}(\mathcal{I}),\, \vphi \in
\torus^d,\, t \in \real\}.
\]
We notice that this manifold is invariant under the flow
$\tilde{\phi}^\sigma_0$, and the restriction of  $\tilde{\phi}^\sigma_0$ to
$\tLambda_0$ is  the form
\[\tilde{\phi}^\sigma_0(0,0,I,\vphi,t)=(0,0,I,\vphi+\omega(I)\sigma,t+\sigma).\]
Note that all the motions are either periodic or quasi-periodic
(depending on the rationality or not of $\omega(I)$). Furthermore,
the characteristic exponents for these motions are $0$.

In the $(p,q)$ variables, the motion is given by the $n$-penduli. By
\textbf{H2}, the point $(0,0)$ is a hyperbolic fixed point and there is
a homoclinic orbit $(p_i(t), q_i(t))$ to $(0,0)$. The characteristic
exponents are \begin{equation*}\label{lambda_i}\tilde\lambda_i=-(-V_i''(0))^{1/2}\textrm { and }
\tilde\mu_i=+(-V_i''(0))^{1/2},\, i=1, \ldots,n.\end{equation*}
Note that, in this case (as it happens in Hamiltonian systems)
the forward exponents $\lambda_i$ are equal to the backwards exponents).
This symmetry disappears when we consider non-Hamiltonian perturbations.

Thus, the global dynamics in the unperturbed
case  is the product of the quasi-periodic
motion on $\tLambda_0$ and the hyperbolic dynamics in the $(p,q)$
variables. We conclude that $\tLambda_0$ is a normally hyperbolic
manifold for $\tilde{\phi}^\sigma_0$. At each point $x\in\tilde\Lambda_0$,
the corresponding stable and unstable spaces $\tilde E^\st_x$ and
$\tilde E^\un_x$, respectively, are
\[\begin{split}\tilde E^\st_x=&\textrm{Span}\{(-(-V''_1(0))^{1/2},\ldots,
-(-V''_n(0))^{1/2},1,\ldots,1,0,0,\ldots, 0)\}, \textrm { and }\\
\tilde E^\un_x=&\textrm{Span}\{(-V''_1(0))^{1/2},\ldots,
(-V''_n(0))^{1/2},1,\ldots,1,0,0,\ldots, 0)\}.\end{split}\]

Invoking the theory
of normally hyperbolic invariant manifolds,
\cite{Fenichel71,HirschPS77,Pesin04}, we conclude that for all
 $|\eps| \ll 1$, there exist  normally hyperbolic manifolds $\tLambda_\eps$,
locally invariant for the flow $\tilde \phi^\sigma_\eps$, and $\Lambda_\eps$, locally
invariant  for the flow
$\tilde \phi^t_\eps$.  As it is well known, the way to construct
these locally invariant manifolds is by extending the vector fields  and
 showing that the extended flows corresponded to those vector fields have some
invariant manifolds. These invariant manifolds for the extended flow
are locally invariant for the original system. (We recall, however
that the invariant manifolds produced can depend on the extension
used.) In our case, the extensions are rather easy to perform
explicitly.
It suffices to extend $\X^0$ and $\X^1$ so that they remain
uniformly differentiable. See Appendix~\ref{sec:hyperbolicity}.

We will
assume henceforth that these extensions have been made and that we
are working in bounded domains on the extended manifolds. The extended manifolds are
normally hyperbolic and the hyperbolicity parameters are close to
those of the unperturbed manifold.


\begin{rem}
Since the normally hyperbolic expansion rates are
bounded uniformly for small $\eps$,  the expansion rates on
the invariant manifolds are close to $0$,
and the invariant manifolds are as smooth as the flow or the map.

More specifically,
 we can choose and fix positive constants
$\tilde\lambda_-$, $\tilde\lambda_+$, $\tilde\lambda_c$, $\tilde\mu_c$, $\tilde\mu_-$, $\tilde\mu_+$,
such that if
$\eps\in[-\eps_0,\eps_0]$ we have that $\tilde{\Lambda}_\eps$ is normally
hyperbolic and the following growth conditions are satisfied:
\begin{equation}\label{characterization}
\begin{split}
\|D\tilde{\phi}^\sigma_\eps(x)(v)\|&\leq C e^{\sigma\tilde\lambda_+}\|v\|
\textrm{ for all }
v\in E^\st_x,\, \sigma>0,\\
\|D\tilde{\phi}^\sigma_\eps(x)(v)\|&\leq C e^{\sigma\tilde\mu_-}\|v\|
\textrm{ for all } v\in E^\un_x,\, \sigma<0. \\
\|D\tilde{\phi}^\sigma_\eps(x)(v)\|&\leq C e^{\sigma\tilde\lambda_c}\|v\|
\textrm{ for all }
v\in T_x \tilde \Lambda_\eps,\, \sigma>0,\\
\|D\tilde{\phi}^\sigma_\eps(x)(v)\|&\leq C e^{\sigma\tilde\mu_c}\|v\|
\textrm{ for all }
v\in T_x \tilde \Lambda_\eps,\, \sigma< 0,\\
\end{split}
\end{equation}
where the $\lambda_+, \mu_+, \lambda_c, \mu_c$ are as close
as desired to the unperturbed ones by considering $|\eps|$
small enough. To avoid cluttering the notation, we
will choose them independently of $\eps$.
As a matter of fact, the converse of \eqref{characterization}
is also true, if a growth of a vector satisfies the
inequalities assumed in \eqref{characterization}, it is
in the corresponding space.

The fact that $\tilde\mu_c$ and $\tilde\lambda_c$ can be taken
close to $0$ implies that $\tilde\Lambda_\eps$ is $C^k$
with $C^k$ arbitrarily large for small enough $\eps$.

\end{rem}

\begin{rem}
In the symplectic case, it is natural to consider manifolds
for which
$\lambda_+ = -\mu_+$, $\lambda_c = - \mu_c$
(there is a pairing rule saying that for each rate the opposite
one appears). The manifolds that have these pairing rule
satisfy several geometric properties. On the other hand, we
note that there are other normally hyperbolic manifolds which
do not satisfy the pairing rule. For example, the stable manifold
of a NHIM is also a NHIM.
\end{rem}

\begin{rem}
The manifold $\tLambda_\eps$ is not symplectic -- it has
odd dimension  -- but if we fix the coordinate $t=\sigma_0$, we
obtain an exact symplectic manifold $\Lambda_\eps^{\sigma_0}$ with symplectic form $\Upsilon^{\sigma_0}_\eps$.
(Note that if $\Upsilon$ is the symplectic
form in the phase space,  $d|_{\Upsilon_\eps} \Upsilon|_{\Lambda_\eps} =
\left(d \Upsilon\right)|_{\Lambda_\eps} = 0$ and that the nondegeneracy
of $\Upsilon|_{\Lambda_\eps}$ is an open condition. If $\Upsilon = d\alpha$,
$\Upsilon|_{\Lambda_\eps}  = d|_{\Lambda_\eps} \alpha|_{\Lambda_\eps}$.

When the perturbations are Hamiltonian,
the flow $\tilde{\phi}^\sigma_\eps$ maps $\Lambda^{\sigma_0}_\eps$ to
$\Lambda^{\sigma+\sigma_0}_\eps$, and is symplectic in the sense that it preserves the symplectic structure, i.e.,
$(\tilde{\phi}^\sigma_\eps)^*(\Upsilon^{\sigma+\sigma_0}_\eps)=\Upsilon^{\sigma_0}_\eps$.
\end{rem}

Attached to the invariant manifold  $\tLambda_\eps$ there exist stable and unstable manifolds $W^{\st}(\tLambda_\eps)$ and, respectively
$W^{\un}(\tLambda_\eps)$.
These manifolds are, in our case,  $C^k$-immersed (due to the center bunching
conditions)
manifolds. For details, see Appendix \ref{sec:extension}.

\subsection{Proof of Proposition \ref{prop:melnikov_int}.}
\label{sec:Melnikov}

In this section we motivate the Melnikov vector and potential given by \eqref{eqn:melnikov_vect} and \eqref{eqn:melnikov_int}, and we prove Proposition \ref{prop:melnikov_int} on the convergence of the integrals involved in the definition of the  Melnikov vector and potential.
As we will see, the main tools are just the results on
smooth dependence on parameters from
the theory of normally hyperbolic invariant manifolds,
the results on smooth dependence on
initial conditions for ordinary differential
equations and  the fundamental theorem of calculus.

The main idea of this  paper is that we do not try to establish the
derivatives of the stable/unstable manifolds with respect to the
perturbation parameter, but to obtain formulas for these derivatives
(which we know they exist by the general theory).
The calculation just takes advantage of the fact that the
variables $P$ are slow (evolve at a speed $\eps$
so that one can use the fundamental
theorem of calculus  over a scale of time much smaller than $\eps^{-1}$.
This calculation applies to general perturbations and does
not use Hamiltonian structure.  In case that the perturbation is
Hamiltonian, one can get several extra properties and the
perturbation can be expressed as a gradient.

\subsubsection{Convergence properties of the integrands in  \eqref{eqn:melnikov_vect} and \eqref{eqn:melnikov_int} }

To prove   the first claim in Proposition~\ref{prop:melnikov_int},
namely that the
integrands in \eqref{eqn:melnikov_vect} and
\eqref{eqn:melnikov_int} converge  uniformly
together with the derivatives, we  start with some general preparation.
This is based on some purely real analysis
results; similar estimates appear in \cite{LlaveMM86,BanyagaLW96,LlaveW10}.

In the calculations that we will carry out in Section
\ref{sec:graphs}, we will find it useful to use the notation of wave
maps introduced in Section~\ref{sec:wavemaps}.  That is,
${\tilde  x}^\pm \equiv \Omega_\eps^{\st,\un}(\tilde x)$ denotes the point in
$\tilde{\Lambda}_\eps$ such that the orbit of $\tilde x$ is
exponentially close (with a fast enough rate) in the future to the
orbit of ${\tilde x}^\pm$.  We refer to Section~\ref{sec:wavemaps} for
more details on the properties of the mappings
$\Omega_\eps^{\st,\un}$.

Taking into account \eqref{convergences}, \eqref{convergenceu} from  Appendix \ref{sec:scatteringnhim}, we see that  when $T
\rightarrow \pm\infty$, we have that $P_i( \tilde{\phi}^T_\eps({\tilde x})) - P_i(
\tilde{\phi}^T_\eps \tilde{\Omega}_\eps^{\st,\un}({\tilde x}) )$ converges to $0$ exponentially
fast.

The main observation, which will be used in Section \ref{sec:graphs}, is that, for
every $\tilde{x}$ in a neighborhood of $\tilde{\Lambda}_\eps$ in
$W^{\st,\un}(\tilde{\Lambda}_\eps)$,
 there is a path $\gamma^{\st,\un}_{\tilde{x}}$ contained in
$W^{\st,\un}_\eps( \tilde{\Omega}^{\st,\un}_\eps(\tilde{x}))$ such that
$\gamma^{\st,\un}_{\tilde x} (0) = \tilde{\Omega}^{\st,\un}_\eps(\tilde x))$,
$\gamma^{\st,\un}_{\tilde x}(1) = \tilde x$.
We can also ensure that
\begin{equation}
\label{uniformity}
|D_{\tilde x}^l \gamma^{\st,\un}_{\tilde x} (\sigma) | \le C
\end{equation}
for all $\tilde x$ in a bounded neighborhood of
$\tilde{\Lambda}_\eps$ in $W^{\st,\un}_\eps(\tilde{\Lambda}_\eps)$.  We can assume
 that the neighborhood where these paths are
obtained is forward (resp., backwards) invariant.

By the fundamental theorem  of calculus and the chain rule,
given a smooth function $f$,
\begin{equation}\label{interpolation}
\begin{split}
f( \tilde{\phi}_\eps^T & (\tilde x) ) -
f( \tilde{\phi}_\eps^T( \tilde{\Omega}^{\st,\un}_\eps(\tilde x)) )
=  \int_{0}^1 \frac{d}{d \sigma} f( \tilde{\phi}_\eps^T
 \gamma^{\st,\un}_{\tilde x}(\sigma))\, d \sigma \\
=& \int_{0}^1 [Df]( \tilde{\phi}_\eps^T \gamma^{\st,\un}_{\tilde x}(\sigma))  \,
(D \tilde{\phi}_\eps^T)( \gamma^{\st,\un}_{\tilde x}(\sigma))  \,
\left(\frac{d}{d \sigma} \gamma^{\st,\un}_{\tilde x}\right) (\sigma) \, d \sigma.
\end{split}
\end{equation}

To prove the  exponential convergence of
the integrands in  \eqref{eqn:melnikov_vect} and \eqref{eqn:melnikov_int},  we will need
to show, as it will be seen in Section \ref{sec:graphs},  that  the integrand in \eqref{interpolation}
decreases exponentially fast with $T$.

The key observation is that, we have
\cite[p.  574]{LlaveMM86}, \cite{LlaveW10},
\[
\begin{split}
|D^l_{\tilde{x}}\tilde{\phi}^T_\eps(\tilde{x}) | \le C_l T^{l+1} e^{(l\mu_c + \lambda_+) T}
\end{split}
\]
So that if $l <  -\lambda_+/\mu_c$,
 the derivatives of the flow converge exponentially
fast in $T$.  Note that for $|\eps|$ sufficiently small,
$\lambda_+$ is arbitrarily to a fixed number
as in \textbf{H2} and $\mu_c$ is close to $0$, so
that we obtain $l$ as large as desired.

If we apply the Faa-di-Bruno formula, to
compute the $l$-th  derivative of the integrand in
\eqref{interpolation}, we obtain several terms whose
 factors which are
are either derivatives of $\tilde{\phi}^T_\eps$ -- hence
 exponentially convergent with a total
exponent $ e^{(l\mu_c +  \lambda_+) T} $ times a polynomial in $T$
or uniformly bounded.
}

\subsubsection{A system of coordinates in
a neighborhood of the homoclinic connection}
\label{sec:coordinates}

We start by observing that \textbf{H2} implies that for the dynamics
of the $i$-th pendulum, in the variables $p_i, q_i$, the point
$(0,0)$ is an isolated critical point of the Hamiltonian $P_i$.
The zero level set of the Hamiltonian $P_i$ -- which we will
denote by $\Sigma_i$
 -- is a curve with singularities  only at $(0,0)$ and otherwise as smooth as
the Hamiltonian. This level set contains at least one orbit -- often
$2$ -- homoclinic to $(0,0)$.

If we  consider the product system of the $n$ penduli, we note
that the origin is a hyperbolic critical point. The product
$\Sigma = \Pi_{i=1} ^{n} \Sigma_i$ is the set of orbits homoclinic
to the origin in $\mathbb{R}^n\times \mathbb{T}^n$.

For each of the indices $i$, we select one of the homoclinic
orbits implied to exist by \textbf{H2}, and choose an origin of the
parameter.  We write
\begin{equation}\label{parametrization}
(p^0_i(\tau_i) , q_i^0(\tau_i) )
\end{equation}
for the selected homoclinic orbit to $(0,0)$, parameterized by its
natural time. The origin of time is chosen in such a way that
$p^0_i(0) = p^*_i, q^0_i(0) = q^*_i$, where $(q^*_i, p^*_i) \ne (0,0)$
are chosen once and for all.

We note that, as $(\tau_1,\ldots ,\tau_n)$ ranges over $\real^n$,
the function
\[
(p^0_1(\tau_1),\ldots ,p^0_n(\tau_n),
q_1^0(\tau_1),\ldots,q_n^0(\tau_n))
\]
gives a parametrization of a connected component  of the homoclinic manifold
$\Sigma$ in the unperturbed case. We will denote it by $\Sigma^*$. In the case when
 the critical point of each pendulum has $2$ homoclinic
orbits, $\Sigma$ will have $2^n$ components as above corresponding to the
different choices of the homoclinic orbits.

Choose $L$ a sufficiently large number and consider a subset of
the $\mathbb{R}^n\times\mathbb{T}^n$ (corresponding to the $(p,q)$
coordinates):
\[\widehat\Sigma  = \{(p^0_1(\tau_1),\ldots ,p^0_n(\tau_n),
q_1^0(\tau_1),\ldots,q_n^0(\tau_n))\, |\, |\tau_i| \le L \textrm {
for all } i=1,\ldots,n \}.\]
The conditions on $L$ will be made explicit in the calculations, but
we emphasize that are independent of $\eps$.
Let $\widehat\Sigma^\eta$ be an $\eta$-neighborhood of $\widehat\Sigma$ in
$\mathbb{R}^n\times\mathbb{T}^n$, for some $\eta>0$ sufficiently
small.

We introduce a new coordinates system on $\widehat\Sigma^\eta$ which is a
product of coordinate systems in the pendulum spaces
$\{(p_i,q_i)\}$, as we describe below.

For each pendulum, the first coordinate of a point $(p_i,q_i)$  is
the corresponding value of the Hamiltonian (energy level)
$P_i(p_i,q_i)=\pm\left[\frac{1}{2}p_i^2+V(q_i)\right]$. The second
coordinate $\tau_i=\tau_i(p_i,q_i)$ is defined as  the time
time $\tau$ of the closest point to $(p_i,q_i)$ on
the orbit $\Sigma^*_i$, that is,
\[
d( (p^0_i(\tau_i) , q_i^0(\tau_i) ), (p_i,q_i))
= \min_{|\tau|\leq L} d((p^0_i(\tau) , q_i^0(\tau))  , (p_i,q_i)).
\]

By the implicit function theorem, $\tau_i$ is
defined in a small enough neighborhood of the point $(p_i,q_i)$,
and  is an extension of the time defined on the homoclinic orbit
(this is why we use the same letter).

Note that the gradient of
$\tau_i$ along the homoclinic connection is tangent to the level
set $P_i=0$. This gradient is bounded away from zero on the
compact set $\widehat\Sigma^\eta\cap\{(p_i,q_i)\}$. The gradient of $P_i$ is perpendicular to the
homoclinic connection. Hence, by the implicit function theorem,
the functions $(P_i,\tau_i)$ define a system of coordinates in a
neighborhood of the homoclinic connection restricted to
$\widehat\Sigma^\eta \equiv \{ |\tau|\leq L$,\, $|P_i| \leq \eta \ll 1 \}$.
Therefore, $(P_i,\tau_i)_{i=1,\ldots,n}$ define a
system of coordinates for the penduli on  $\widehat\Sigma^\eta$, for $\eta$
sufficiently small.   Note that this coordinate system does not
extend to a coordinate system near the equilibrium points of
each of the penduli. At the equilibrium point of
the $i$-th pendulum, $P_i$ has a critical point and,
therefore, is not a good coordinate.

We will follow the standard practice in dynamics to denote
a point in phase space by the coordinates. The letters used
will specify which system of coordinate is being used. In cases
where this could lead to confusion, we will use a more precise
notation specifying the ranges and domains explicitly.

The  coordinate system  $(P_i,\tau_i)_{i=1,\ldots,n}$,
corresponding to the penduli, together with the remaining
coordinates $(I,\vphi)$, corresponding to the rotators, constitute a
system of coordinates on the phase space in a neighborhood of the
homoclinic connection.

In summary, we will use the coordinate system  $(P,\tau,I,\vphi,t)$
in the domain $\widehat\Sigma^\eta\times
\mathbb{R}^d\times\mathbb{T}^d\times\mathbb{R}$ of the augmented phase space
$\mathbb{R}^n\times\mathbb{T}^n\times\mathbb{R}^d\times\mathbb{T}^d\times\mathbb{R}$, where $P=(P_i)$ and $\vphi=(\vphi_i)$.
Each point ${\tilde x}$
in this domain can be specified  as ${\tilde x}\equiv (P,\tau,I,\vphi, t)$,
and also as ${\tilde x}\equiv (p,q,I,\vphi, t)$;

This coordinate system has several
properties that we will find quite useful for later applications,
which we note for future reference:
\begin{itemize}
\item The variables $P_i$ extend to the whole space of the penduli
$\mathbb{R}^n\times\mathbb{T}^n$;

\item The $P_i$- and $I$-coordinates of points are preserved by the flow
$\tilde{\phi}^\sigma_0$ for $\eps = 0$ and, therefore, for $\eps$ small
enough, they will be slow variables; on the other hand the variables $\tau,\vphi,t$ move with speed $O(1)$, so they will be the fast variables.

\item For $\eps = 0$, the manifolds
$W^{\st,\un}(\tilde\Lambda_0)$ have
 very simple expressions since they are just obtained by setting
$P_i = 0$ and letting $\tau_i, I, \vphi, t$ vary.
\end{itemize}

\subsubsection{Representation of the stable/unstable manifolds as
graphs}\label{sec:graphs}

We will work in the augmented phase space
$\mathbb{R}^n\times\mathbb{T}^n\times
\mathbb{R}^d\times\mathbb{T}^d\times\real$. For $\eps = 0$, the
stable and unstable manifolds are given by setting all the $P_i$
coordinates  equal to $0$. That is, the portion of the stable manifolds
we are interested in is the graph of the function $0$ from the
$(\tau, I, \vphi, t)$ coordinates  to the $P$ coordinates.
The variable $\tau$ will range over a bounded domain, which
we will take to be $[ -c | \ln|\eps||, c | \ln|\eps|]$.
Note that it this is a good system of coordinates, even if
for large values of $|\tau|$ it is moderately singular.

Since the
derivative of the zero function is bounded, the smooth dependence of
the stable manifolds on parameters that one obtains from the
standard theory of normally hyperbolic invariant manifolds implies
that, for every $\eps$ sufficiently small there is a function
$\Psi^\st_\eps$ that assigns to each $(\tau, I, \vphi, t)$, with
$|\tau_i|\leq L$ for all $i$, a unique $P=\Psi^\st_\eps(\tau, I, \vphi,t) \in \real^n$ such that
\[ \left(\Psi^\st_\eps(\tau, I, \vphi,t),
\tau, I, \vphi, \right)\in W^\st(\tLambda_\eps).\] An analogous
argument holds for the unstable manifolds, providing us with a
function $\Psi^\un_\eps$ that assigns to each $(\tau, I, \vphi, t)$,
with $|\tau_i|\leq L$ for all $i$,  a unique $P=\Psi^\un_\eps(\tau, I, \vphi,t)\in \real^n$ such
that \[(\Psi^\un_\eps(\tau, I, \vphi,t), \tau, I, \vphi, t)\in W^\un(\tLambda_\eps).\] See
Figure \ref{fig_graphs}.

From the theory of normally hyperbolic
manifolds
\cite{HirschPS77, Fenichel71,Pesin04}.
it follows that the functions $\Psi^{\st,\un}_\eps$ are
jointly $C^{k}$ in all variables and parameters,  with all the
derivatives being uniformly bounded in the domain of definition of
the functions $\Psi^{\st,\un}_\eps$. See Appendix \ref{sec:extension}.

\begin{figure}
\includegraphics[width=0.5\textwidth]{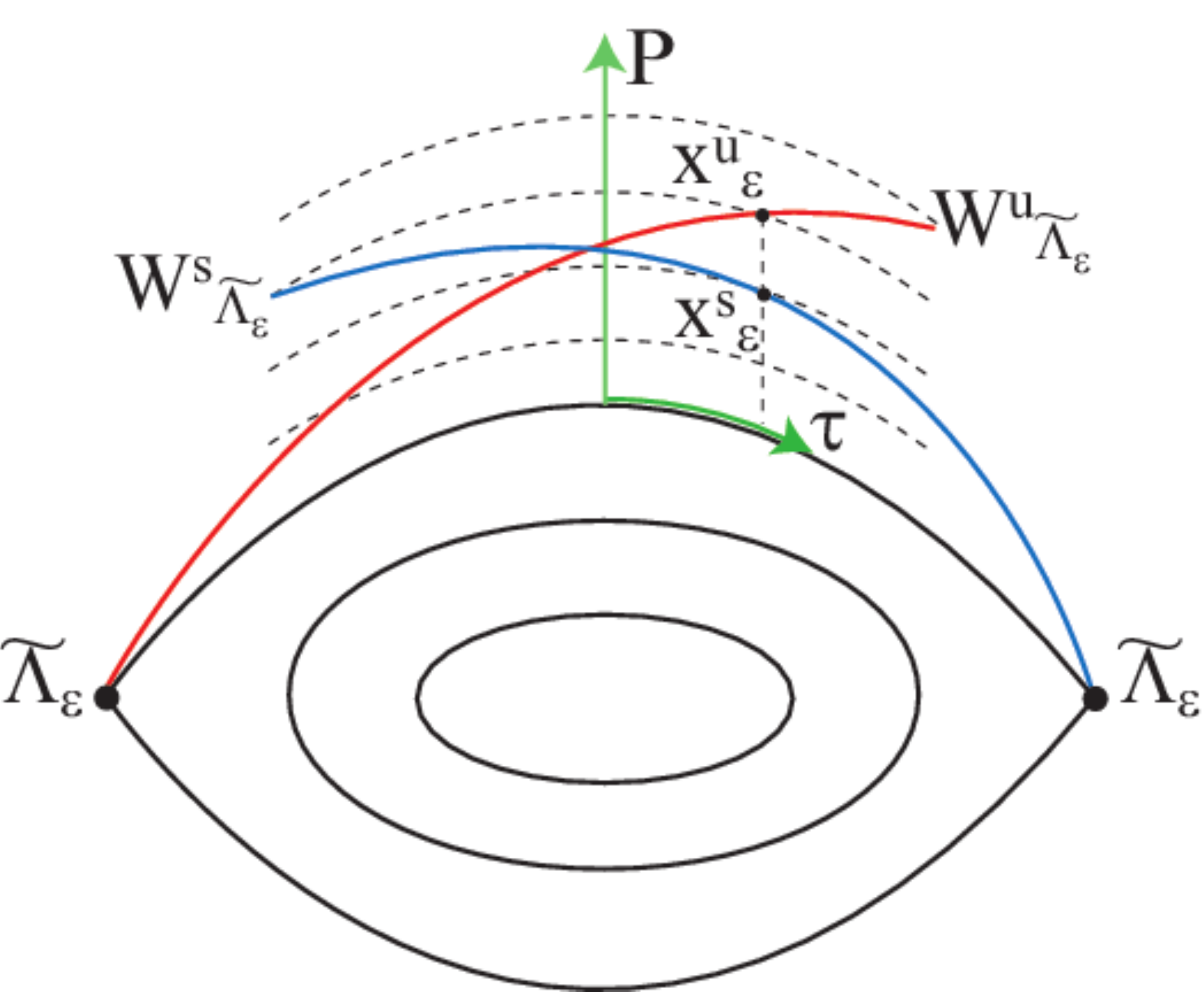}
\caption{The stable and unstable manifolds as graphs.}
\label{fig_graphs}
\end{figure}

\subsubsection{The perturbation equations}
We denote,  the Poisson bracket by $[\cdot, \cdot]$, we have
that $[ P_i, P_j ] = 0$, $[P_i, h_0] = 0$, $[\tau_i, h_0]=0$, $[I,
h_0] = 0$, $[I, P_i] = 0$, $[\vphi,h_0]=\partial h_0/\partial I$,
$[\vphi,h]=\partial h/\partial I$.

We can write the evolution
equations of the coordinates introduced in
Section~\ref{sec:coordinates} as, with the last line in each equation corresponding to the Hamiltonian perturbation:
\begin{equation}\label{evolution}
\begin{split}
\dot P_i &= (\X^0+\eps X^1)(P_i) \\ & = \eps\X^1(P_i)  \\ & = \eps[ P_i,  h], \\
\dot \tau_i &= (\X^0+\eps X^1)(\tau_i)\\&=[\tau_i,P_i]+\eps\X^1(\tau_i) \\ &=[\tau_i,P_i]+\eps[\tau_i,h],\\
\dot I &= (\X^0+\eps X^1)(I) \\&= \eps X^1(I) \\&= \eps[ I,  h], \\
\dot \vphi &=  (\X^0+\eps X^1)(\vphi)\\&=\frac{\partial h_0}{\partial I}
+ \eps X^1(\vphi)\\&=\frac{\partial h_0}{\partial I}
+ \eps\frac{\partial h}{\partial I}.
\end{split}
\end{equation}

Note that the variables $P_i, I$ are slow because the right hand
side of \eqref{evolution} has an $\eps$ factor.  On the other hand,
the variables $\tau_i$, $\vphi$ have evolutions which are bounded
away  from zero uniformly as $\eps$ becomes small. Recall that we
made the Hamiltonian system autonomous by adding $t$ as an extra
variable satisfying the additional equation
\[
\dot t = 1.
\]
Note that this makes $t$ a
 fast variable since  that the right hand side does
not approach $0$ as $\eps$ approaches $0$.

\subsubsection{The perturbative formula for the invariant graphs}

Now we derive explicitly the integrals that appear in \eqref{eqn:melnikov_vect} and \eqref{eqn:melnikov_int}.
We will simultaneously treat the general perturbation case, and the Hamiltonian perturbation case, emphasizing the differences when appropriate.

By the considerations in Subsection \ref{sec:graphs}, we know
that the implicit function theorem implies that
we can write a piece of
the stable manifold as the graph of a differentiable functions. (A similar statement holds for the unstable manifold.)
That is,
\[
\tilde{x}\equiv( P, \tau, I, \vphi, t) \in W^\st(\tLambda_\eps),
 \textrm{  with }
P =  \Psi^\st_\eps(\tau, I, \vphi,t),
\]
for an explicit function $\Psi^\st_\eps$.

The goal of this section is to
obtain the first order term in the expansion in $\eps$ of
$\Psi^\st_\eps$.
 The procedure we will follow is rather standard and it
goes back to \cite{Poincare99}. It amounts to applying the
fundamental theorem of calculus to relate the
value of $\Psi^\st_\eps$ at a certain point in $W^\st(\tilde{\Lambda}_\eps)$
to the value of $\Psi^\st_\eps$ at a  point in the orbit at
more advanced time.
Imposing the condition that the behavior at the point
advanced time converges, we obtain an integral expression
for the $\Psi^\st_\eps(\tau, I, \vphi, t)$ which will allow us to
compute the desired expansion. Note that this
uses the fact that $P$ is a slow variable and that the
convergence of the orbits is exponential.

Of course, once we have computed
an approximation for $\Psi^\st_\eps$, we can
obtain a similar expression for $\Psi^\un_\eps$,
by considering
inverse flow (or by performing directly a very similar argument).

For the moment, we will consider  a point $\tilde{x}\equiv(P,\tau, I, \vphi,
t)\in W^\st(\tLambda_\eps)$ fixed and we will derive an approximate
expression for $\Psi^\st_\eps(\tau, I, \vphi, t)$.

We will use the notation $\tilde \Omega^s_\eps(\tilde x)$ to denote
the only point in $\Lambda_\eps$ such that
$\tilde x \in {\tilde W}^s_{\tilde \Omega^s_\eps(\tilde x)}$.
See Section~\ref{sec:wavemaps} for more properties of these maps.

First, we will
estimate the error terms of the approximation of $\Psi^\st_\eps$ in
the $C^0$-norm. Later we will estimate them in smooth norms.

By the fundamental theorem of calculus, for any
point $\tilde x \in W^\st(\tilde{\Lambda}_\eps)$ and any  $T > 0$ we have
the following identity (the last line is, of course,
true only in the Hamiltonian case).

\begin{equation}\label{FTC}
\begin{split}
P_i(\tilde{x}) - P_i(\tilde{\Omega}^\st_\eps(\tilde{x}))
= & P_i( \tilde{\phi}^T_\eps(\tilde{x})) - P_i(\tilde{\phi}^T_\eps \tilde{\Omega}_\eps^\st(\tilde{x}) ) \\
& -\int_0^T  \frac{d}{d \sigma}\big[ P_i( \tilde{\phi}^\sigma_\eps(\tilde{x})) -
P_i( \tilde{\phi}^\sigma_\eps \tilde{\Omega}_\eps^\st(\tilde{x}) ) \big] \, d\sigma\\
= & P_i( \tilde{\phi}^T_\eps(\tilde{x})) - P_i(\tilde{\phi}^T_\eps \tilde{\Omega}_\eps^\st(\tilde{x}) ) \\
& -\eps\int_0^T \big[ (\X^1 P_i)( \tilde{\phi}^\sigma_\eps(\tilde{x})) -
(\X^1 P_i)( \tilde{\phi}^\sigma_\eps \tilde{\Omega}_\eps^\st(\tilde{x}) ) \big] \, d\sigma \\
= & P_i( \tilde{\phi}^T_\eps(\tilde{x})) - P_i(\tilde{\phi}^T_\eps \tilde{\Omega}_\eps^\st(\tilde{x}) ) \\
& -\eps\int_0^T  \big[ [P_i, h]( \tilde{\phi}^\sigma_\eps(\tilde{x})) -
[P_i,h]( \tilde{\phi}^\sigma_\eps \tilde{\Omega}_\eps^\st(\tilde{x}) ) \big] \, d\sigma.
\end{split}
\end{equation}

 Using that $P_i(\tilde{x}) - P_i(\tilde{\Omega}^\st_\eps(\tilde{x}))\to 0$ as $T\to \infty$, as shown by \eqref{interpolation},    making $T=c\log\left(\frac{1}{\eps}\right)$, and using the evolution equations  \eqref{evolution}  in \eqref{FTC},
we obtain (again, the last identity is true in the Hamiltonian case):
\begin{equation}\label{FTC2}
\begin{split}
P_i({\tilde x}) - P_i( \tilde{\Omega}^\st_\eps({\tilde x})) =&-\eps\int_0^{c
\log(\frac{1}{\eps})}
  \big[ (\X^1 P_i)( \tilde{\phi}^\sigma_\eps({\tilde x})) - (\X^1 P_i)( \tilde{\phi}^\sigma_\eps \tilde{\Omega}^\st_\eps({\tilde x}) )
\big] \, d\sigma \\&\,\,+ O_{C^{k}}(\eps^2) \\
=&-\eps\int_0^{c
\log(\frac{1}{\eps})}
  \big[ [P_i,h]( \tilde{\phi}^\sigma_\eps({\tilde x})) - [P_i,h]( \tilde{\phi}^\sigma_\eps \tilde{\Omega}^\st_\eps({\tilde x}) )
\big] \, d\sigma \\&\,\,+ O_{C^{k}}(\eps^2).
\end{split}
\end{equation}

Recall that the evolution equations \eqref{evolution} show that
$P_i$ is a slow variable. Taking into account that the integrand
depends smoothly on parameters, to compute the first order term we
only need to compute the integral over the unperturbed trajectory
corresponding to $\eps = 0$.

More precisely, we notice
that because of the smooth dependence on parameters of the
normally hyperbolic manifolds, ${\tilde x}$ can be expressed with respect to
the original variables as
\begin{equation}\label{eqn:x0} {\tilde x}={\tilde x}^0+O_{C^{k}}(\eps),\end{equation}
for some point ${\tilde x}^0\equiv(p^0(\tau),q^0(\tau), I,\vphi,t) \in
\Sigma^*$, the part of the homoclinic manifold selected earlier in
the section.

A key observation is that, since $\eps \log(|\eps|)$ is
much smaller than $1$, we can use Gronwall's inequality
to control the perturbations in $\eps$ for
$\tilde \phi^\sigma_\eps$ uniformly for $|\sigma| \le C |\log(|\eps|) |$ (see
\cite{Hale77,Hartman02}):

\begin{equation}
\label{perturbation}
\begin{split}\tilde{\phi}^\sigma_\eps({\tilde x})=\tilde{\phi}^\sigma_0({\tilde x}^0)+O_{C^{k}}(\eps),\\
\tilde{\phi}^\sigma_\eps(\tilde{\Omega}^\st_\eps({\tilde x})))=\tilde{\phi}^\sigma_0(\tilde{\Omega}^\st_0
({\tilde x}^0))+O_{C^{k}}(\eps),\\
\X^1({\tilde x};\eps)=\X^1({\tilde x}^0;0)+O_{C^{k}}(\eps),\\
h({\tilde x};\eps)=h({\tilde x}^0;0)+O_{C^{k+1}}(\eps).\end{split}
\end{equation}
where we emphasize that the $O_{C^k}(\eps)$ are uniform
in $|\sigma| \le C | \ln( |\eps|)|$, and the last line refers to the Hamiltonian perturbation.


Substituting \eqref{perturbation}  in \eqref{FTC2} gives us
\begin{equation}\label{goodformula}\begin{split}
P_i({\tilde x}) - P_i( \tilde{\Omega}^\st_\eps({\tilde x}))  = - &\eps
\int_0^{c\log(\frac{1}{\eps})} \left[(\X^1 P_i)(\tilde{\phi}^\sigma_0({\tilde x}^0))
-(\X^1 P_i)(\tilde{\phi}^\sigma_0(\tilde{\Omega}^\st_0({\tilde x}^0)) \right]{d\sigma}\\ &+
O_{C^{k}}(\eps^2\log\eps)+O_{C^{k}}(\eps^2)\\
= - &\eps
\int_0^{c\log(\frac{1}{\eps})} \left [[P_i, h](\tilde{\phi}^\sigma_0({\tilde x}^0))
-[P_i, h](\tilde{\phi}^\sigma_0(\tilde{\Omega}^\st_0({\tilde x}^0)) \right]{d\sigma}\\ &+
O_{C^{k}}(\eps^2\log\eps)+O_{C^{k}}(\eps^2).\end{split}
\end{equation}
Of course, the first equality is true in general and the
second one is valid only in the Hamiltonian case.

The remainder $O_{C^{k}}(\eps^2\log\eps)+O_{C^{k}}(\eps^2)$ is of order
$O_{C^{k}}(\eps^{1+\varrho})$ for any $0< \varrho<1$.

For the point ${\tilde x}\in W^\st(\tLambda_\eps)$, we now express
$P({\tilde x})=(P_1({\tilde x}),\ldots, P_n({\tilde x}))$ and the right-hand side of
\eqref{goodformula}  in terms of $(\tau,I,\vphi, t)$; however, we
will  express $\X^1$ in terms of the original variables
$(p,q,I,\vphi,t;\eps)$. We also let the integral run to $\infty$ by
estimating the remainder of the integral as a quantity of order
$O_{C^{k}}(\eps^{1+\varrho})$, yielding
\begin{equation}\label{bestformulaX}\begin{split}
&(\Psi^\st_\eps)_i(\tau,I,\vphi,t) - P_i(\tilde{\Omega}^\st_\eps({\tilde x}))\\
&\quad =-\eps
\int_{0}^{\infty} \left
[(\X^1P_i)(p^0(\tau+\sigma\bar 1),q^0(\tau+\sigma\bar 1),I,\vphi+\sigma\omega(I), t+\sigma;0)\right.\\
 &\quad\quad\qquad  \left. - (\X^1P_i)(0,0,I,\vphi+\sigma\omega(I), t+\sigma;0) \right]{d\sigma}\\
  &\quad\quad  + O_{C^{k}}(\eps^{1+\varrho}).
\end{split}
\end{equation}

The same argument ran for the reversed dynamics yields:
\begin{equation}\label{bestformula2X}\begin{split}
&(\Psi^\un_\eps)_i(\tau,I,\vphi,t) - P_i(\tilde{\Omega}^\un_\eps({\tilde x}))\\
&\quad =\eps
\int_{-\infty}^{0} \left
[(\X^1P_i)(p^0(\tau+\sigma\bar 1),q^0(\tau+\sigma\bar 1),I,\vphi+\sigma\omega(I), t+\sigma;0)\right.\\
 &\quad\quad\qquad  \left. - (\X^1P_i)(0,0,I,\vphi+\sigma\omega(I), t+\sigma;0) \right]{d\sigma}\\
  &\quad\quad  + O_{C^{k}}(\eps^{1+\varrho}).
\end{split}
\end{equation}

We finally observe that,
since $P$ has a critical point at $(0,0)$, $\|P(\tilde{\Omega}^\st_\eps({\tilde x^0})) -P(\tilde{\Omega}^\un_\eps({\tilde x^0}))\|_{C^{k}} \le C
\eps^2$.
This is because we have smooth dependence on parameters
for the NHIM and their invariant manifolds (as well as their
derivatives), we can also estimate the derivatives with respect to $\tilde{x}$. Due to
\eqref{eqn:x0}, we have that
 $\|P(\tilde{\Omega}^\st_\eps({\tilde x})) -P(\tilde{\Omega}^\un_\eps({\tilde x}))\|_{C^{k}}=O_{C^k}(\eps^2)$.

We conclude that
\begin{equation}\label{melnikov_v_eps}\begin{split} &(\Psi^\un_\eps)_i( \tau, I, \vphi, t) -(\Psi^\st_\eps)_i
( \tau, I, \vphi, t)
\\&\quad = \eps\int_{-\infty}^\infty
\, \left[(\X^1P_i)( p^0(\tau+\sigma\bar 1 ), q^0(\tau+\sigma\bar 1 ), I ,
\vphi + \sigma\omega(I), t+\sigma;0) \right.\\&
\quad\quad\qquad\left .- (\X^1P_i)( 0, 0, I , \vphi +  \sigma\omega(I),
t+\sigma;0)\right] d\sigma\\&\quad\quad+
O_{C^{k}}(\eps^{1+\varrho})
\\&\quad=\eps\Mv_i(\tau,I,\phi,t)+
O_{C^{k}}(\eps^{1+\varrho}).\end{split}
\end{equation}

In the Hamiltonian case we observe that writing
\[
\tilde{\phi}^\sigma_0({\tilde x}^0)
\equiv(p^0(\tau+\sigma\bar 1),q^0(\tau+\sigma\bar 1), I,\vphi+\sigma\omega(I), t+\sigma),\]
we have  that
\begin{equation}\label{tauderivative}
\begin{split}
[P_i, h]( \tilde{\phi}^\sigma_0({\tilde x}) )
 &= \left [\frac{\partial P_i}{\partial q_i}\frac{\partial h}{\partial p_i}-
 \frac{\partial P_i}{\partial p_i} \frac{\partial h}{\partial
 q_i}\right]( \tilde{\phi}^\sigma_0({\tilde x}^0) )\\&=-\frac{\partial}{\partial\tau_i}h( \tilde{\phi}^\sigma_0({\tilde x}^0)).\end{split}
\end{equation}

From \eqref{melnikov_v_eps}, using the fact that the derivative
with respect to $\tau_i$ and the integral with respect to $t$
commute, and applying \eqref{tauderivative},
 we obtain
\begin{eqnarray}\label{melnikoveps}\qquad
\lefteqn{\Psi^\un_\eps( \tau, I, \vphi, t) -\Psi^\st_\eps( \tau, I,
\vphi, t)}
\\\nonumber
&&\quad -\eps\frac{\partial }{\partial \tau} \int_{-\infty}^\infty
\, \left[h( p^0(\tau+\sigma\bar 1 ), q^0(\tau+\sigma\bar 1 ), I ,
\vphi + \sigma\omega(I), t+\sigma;0) \right.\\&& \nonumber \qquad
\qquad\qquad\left .- h( 0, 0, I , \vphi +  \sigma\omega(I),
t+\sigma;0)\right] d\sigma\\\nonumber &&\quad+
O_{C^{k}}(\eps^{1+\varrho}).
\end{eqnarray}

\subsubsection{The Melnikov Potential.}

We now want to measure the splitting of the stable and unstable
manifolds regarded as functions of $(\tau, I,\vphi, t)$. For this,
we pick a point ${\tilde x}^0\equiv(0, \tau,I,\vphi,t)$ on the homoclinic
manifold of the unperturbed system, and consider the corresponding
points
\begin{equation*}
\begin{split}
{\tilde x}^\st_\eps&\equiv(\Psi^\st_\eps(\tau,I,\vphi,\sigma),\tau,I,\vphi,t)\in
W^\st(\tLambda_\eps),\\
{\tilde x}^\un_\eps&\equiv(\Psi^\un_\eps(\tau,I,\vphi,\sigma),\tau,I,\vphi,t)\in
W^\un(\tLambda_\eps).
\end{split}
\end{equation*}
See Figure \ref{fig_graphs}.
The splitting of
the stable and unstable manifolds measured at one point on the
homoclinic manifold with respect to the $P$-variable  is given by
the distance given by \eqref{melnikoveps}.

\begin{rem}
The regularity of the reminders
can be easily bootstrapped.
We need to emphasize that the regularity of
a function taking values in a space, requires
specifying a topology in this space.
We recall
that the theory of normally hyperbolic invariant
manifolds establishes that the stable/unstable manifolds depend $C^2$
on parameters when the manifolds are given the $C^{k-2}$ topology.
When $k \ge 2$, the derivatives with respect to
parameters in the sense of the
$C^{k-2}$  topology are also derivatives in the sense of the $C^0$ topology.
Using the uniqueness of the derivative,
if we have formulas for  the derivative in
the $C^0$ topology, these are also formulas for the derivative
in the $C^{k - 2}$ topology.

Hence, we can conclude that the reminders in the expansions
the form $O_{C^0}(\eps^{1 + \rho})$ for any  $\rho > 0$ are
actually
$O_{C^{k -2}}( \eps^2)$.
\end{rem}

If we now denote by $\mathcal{M}( \tau, I, \vphi, t)$  the Melnikov function
which is the first order approximation of the distance between the
invariant manifolds in \eqref{melnikoveps}, given by
\begin{equation}\label{melnikov}\begin{split}
{\mathcal{M}}&( \tau, I, \vphi, t) \\&=-\frac{\partial }{\partial \tau}
\int_{-\infty}^\infty \, \left[h( p^0(\tau+\sigma\bar 1 ),
q^0(\tau+\sigma \bar 1;0), I , \vphi +  \sigma\omega(I),
t+\sigma;0)\right.
\\ &\qquad\qquad\qquad\left.- h( 0, 0, I , \vphi +  \sigma\omega(I),
t+\sigma;0)\right] d\sigma,\end{split}
\end{equation}
then \eqref{melnikoveps} yields
\begin{equation}\label {splittingsize}\Psi^\un_\eps( \tau, I, \vphi, t)
-\Psi^\st_\eps( \tau, I, \vphi, t)=-\eps \mathcal{M}(\tau,I,\vphi,
t)+O_{C^{k}}(\eps^{1+\varrho}).\end{equation}

\begin{rem}
We can alternatively write the Melnikov function \eqref{melnikov}
as
\begin{equation}\label{melnikovalternative}
\begin{split}\mathcal{M}( \tau, I, \vphi, t) &\\= \int_{-\infty}^\infty &\,\left[
[P,h]( p^0(\tau+\sigma\bar 1 ), q^0(\tau+\sigma\bar 1 ), I , \vphi
 +\sigma\omega(I), t+\sigma;0)
\right.\\&\left.- [P,h]( 0, 0,
I , \vphi +  \sigma\omega(I),t+\sigma;0)\right] d\sigma.\end{split}
\end{equation}

This is due to the fact that, by \eqref{geometrymelnikov},
\begin{eqnarray*}
\lefteqn{[P,h](
p^0(\tau+\sigma\bar 1 ), q^0(\tau+\sigma\bar 1 ), I , \vphi +
\sigma\omega(I),t+ \sigma;0)}\\&&=-\frac{\partial }{\partial \tau}
h( p^0(\tau+\sigma\bar 1 ), q^0(\tau+\sigma\bar 1 ), I , \vphi +
\sigma\omega(I) ,t+ \sigma;0),
\end{eqnarray*}
where the Poisson bracket is thought of as the derivative along
the trajectory.  Similar calculations  can be found in
\cite{DelshamsG00,DelshamsLS06b}.
\end{rem}

\subsection{Geometric properties of the Melnikov vector}
Recall that the Melnikov vector \eqref{eqn:melnikov_vect} is the first order
expansion in $\eps$ of the vertical distance of the invariant
manifolds.

A simple application of the implicit function theorem
shows that if the Melnikov vector  has a non-degenerate
zero (i.e., a zero so that the gradient has full rank),
then, there is a true intersection which, furthermore is
transversal.  Therefore, the intersection will be locally a manifold
$\Gamma_\eps$ of the same dimension as $\Lambda_0$ and it can be continued
smoothly when $| \eps| < \eps_0$ and  the intersection is
transversal when $0 < |\eps| < \eps_0$.
Note that the manifold itself has a smooth continuation
trough $\eps = 0$, even if it is not transversal there.

These intersections that continue smoothly through $\eps = 0$ are
called in the literature
the \emph{primary} intersections. There are
other transversal intersections (called \emph{secondary} )
which cannot be continued across $\eps = 0$. See \cite{Moser73}
for a more detailed discussion.

\begin{rem}\label{rem:trasninter}
It is well known to dynamicists that, if there is one transverse
intersection, because of the invariance of the manifolds,
the orbit of this intersection are  also a transverse
intersection.

As we will see later, one  expects that for
generic systems with generic perturbations there are several  primary
intersections which are not in the same orbit.  This is
because one expects, because of topological reasons
that there are several non-degenerate zeros of the Melnikov function
in a fundamental domain.

Each of these primary intersections could be used for problems in
Arnol'd diffusion since each of them generates a scattering map.
It also seems possible that, using
genericity arguments, etc.  one could use the secondary intersections,
but, since they are not perturbative, they are not accessible
to the perturbative computations considered in this paper.
\end{rem}

In our case, the existence of non-degenerate zeros of the
Melnikov vector  is the content of
assumption~\textbf{H3}.

For the applications to concrete systems, note that the integrals
can be computed numerically with error bounds and, using
an implicit function theorem, index theorem etc. one can
verify \textbf{H3} with a finite precision calculation.
Indeed the CAPD software \cite{CAPD} provides software
to perform similar  calculations in a computer with rigorous bounds.
See also \cite{Tucker11} for another general use package.
The paper \cite{CapinskiGL17} carries out similar computations
for concrete problems in celestial mechanics.

Note that
\eqref{melnikov} can be written as
\begin{equation} \label{melnikov2}
\mathcal{M}( \tau, I, \vphi,t) = \frac {\partial} {\partial{\tau} }
{\tM}( \tau, I, \vphi,t)
\end{equation}
where $\tM$ is the Melnikov potential
defined by \eqref{eqn:melnikov_int}.

When the potential $\mathcal{M}$ has a non-degenerate critical
point, by the implicit function theorem, we can find a
$C^{k-1}$-differentiable $(2d+1)$-dimensional manifold of critical points
given by
\begin{equation}\label{intersection}
\tau = \tau^*(I, \vphi,t)
\end{equation}
for all $(I, \vphi,t) \in U^-_\eps$, where $ U^-_\eps$ is an
open domain in $\tilde\Lambda_\eps$. We denote by $\tilde\Gamma _\eps$ the  intersection manifold
parametrized by $\tau = \tau^*(I, \vphi,t)$. The domain $U^-_\eps$
can be chosen so that it contains a ball of positive size
independent of $\eps$; per assumption \textbf{H3}, we can choose $U^-_\eps$ to contain the image $\tilde\Xi_\eps(\mathcal{I}^*\times\mathcal{O})$ of the product $\mathcal{I}^*\times\mathcal{O}$ via $\Xi_\eps$.
Here $\Xi_\eps$ is the parametrization introduced in Remark \ref{parameterization1}.
By restricting $U^-_\eps$ if necessary, we
can ensure that $\tilde{\Omega}^\un_\eps$ is a diffeomorphism from
$U^-_\eps$ to the intersection surface.

If the critical point is non-degenerate, the intersection between
stable and unstable manifolds will be transverse along a manifold
parameterized as in~\eqref{intersection}.

It is important to note that, under hypothesis \textbf{H3},
all the primary intersections we consider are given as
graphs from $(I,\vphi, t)$, the variables in $\Lambda_0$, to
$\tau$. Geometrically, this means that the intersections
produced using hypothesis \textbf{H3} are transverse, and they also
satisfy the strong transversality conditions \eqref{goodtransversal} and \eqref{goodtransversal2}, that ensure that we can define  the scattering
map as a differentiable map. The reason why we have \eqref{transveral2}
is that we have it for all graphs with finite derivative for $\eps = 0$.

This concludes the proof of Proposition \ref{prop:melnikov_int}.

\begin{rem}
The assumption \textbf{H3} is more than what we need for
subsequent analysis. The subsequent analysis
just needs \eqref{goodtransversal} and we are verifying
\eqref{goodtransversal2}. It is not hard to produce
examples -- that happen in a $C^1$ open set of
diffeomorphisms -- where \eqref{goodtransversal2} fails
but nevertheless \eqref{goodtransversal} holds.
\end{rem}

\subsection{Reduced Melnikov potential}\label{section:reduced}

Note that by the change of variable formula applied to \eqref{eqn:melnikov_int} we have that
\[\widetilde{\mathcal{M}}=\widetilde{\mathcal{M}}(\tau-t\bar 1 ,I,\vphi-t\omega(I),0)\]
so the map \[\tau \in\mathbb{R}^n\mapsto \widetilde{\mathcal{M}}(\tau,I,\vphi-t\omega(I),0)\in\mathbb{R}\]
has critical points $\tau^*$ satisfying
\begin{equation}\label{eqn:tau_shift}\tau^*(I,\vphi-t\omega(I),0)=\tau^*(I,\vphi,t)-t\bar 1.\end{equation}
Let \[ \mathcal{M}^*(I,\vphi,t)=\widetilde{\mathcal{M}}(\tau^*(I,\vphi,t),I,\vphi,t).\]
Note that the time-shift property \eqref{eqn:tau_shift} yields the following invariance equation for $\mathcal{M}^*$
\begin{equation}\label{eqn:m_shift}
\mathcal{M}^*(I,\vphi,t)=\mathcal{M}^*(I,\vphi-t\omega(I),0).
\end{equation}
Then, the  mapping \begin{equation}\label{eqn:m_reduced} {\widetilde{\mathcal{M}}^*}(I,\theta)={\mathcal{M}}^*(I,\theta,0),\end{equation}
is  well defined for all $ (I,\theta)$ with $I\in\mathcal{I}^*$  and $\theta\in\mathbb{R}^d$ of the form $\theta=\vphi-t\omega(I)$, that is, for all $\theta\in\mathbb{R}^d$ for which $(\theta+t\omega(I),t) \in \mathcal{O}$ for some $t$.

Conversely, if $(I,\vphi,t)\in \mathcal{I}^*\times\mathcal{O}$, then $ (I,\vphi-t\omega(I))$ is well defined, and we have\
\begin{equation}\label{eqn:m_equiv} {\widetilde{\mathcal{M}}^*}(I,\vphi-t\omega(I))=\mathcal{M}^*(I,\vphi-t\omega(I),0)=\mathcal{M}^*(I,\vphi,t).
\end{equation}

\subsection{Proof of Theorem \ref{prop:melnikov_jump}}\label{subsection:scattering}
Let ${\tilde {x}}$ be a point in the homoclinic intersection $\tilde\Gamma_\eps$ specified above,
let ${\tilde {x}}^-=(\tilde\Omega^\st_\eps)^{-1}({{\tilde x}})$ and ${\tilde {x}}^+=\tilde\Omega
^\un_\eps({\tilde {x}})=\tilde S_\eps({\tilde {x}}^-)$.

Applying the fundamental theorem of calculus:
\begin{equation}\label{firstorderscattering1}
\begin{split}
I({{\tilde x}}^+) - I({{\tilde x}}) & =
\int_0^\infty \frac{d}{d \sigma} \left[I( \tilde{\phi}^\sigma_\eps({\tilde {x}}))
-I( \tilde{\phi}^\sigma_\eps(\tilde{\Omega}^\st_\eps({\tilde {x}})))\right]\, d\sigma\\
&= \eps \int_0^\infty (\X^1 I)( \tilde{\phi}^\sigma_0({\tilde {x}})
)-(\X^1 I)(\tilde{\phi}^\sigma_0(\tilde{\Omega}^\st_0({\tilde {x}})))\, d\sigma\\& +
O_{C^{k}}(\eps^{1+\varrho}) \\
&= \eps \int_0^\infty [ I, h]( \tilde{\phi}^\sigma_0({\tilde {x}})
)-[I,h](\tilde{\phi}^\sigma_0(\tilde{\Omega}^\st_0({\tilde {x}})))\, d\sigma\\& +
O_{C^{k}}(\eps^{1+\varrho})
\end{split}
\end{equation}
for any $0<\varrho < 1$.

The application of the fundamental theorem of calculus
to improper integrals requires some justification
and the fact that we substitute the
$\phi^\sigma_\eps$ by $\phi^\sigma_0$ in the first line
requires some justification, but it
is very similar to the argument we used before. Since
the integrand converges exponentially
fast to zero as $\sigma\to\infty$, we can take
the limits to be $C |\log( |\eps| )| $ rather than  $\infty$
incurring in an error bounded by $C \eps^2$.
Then, we change the dynamics by the unperturbed one in
this finite interval incurring in an error bounded by
$C \eps^s|\log( |\eps| )$|. Finally, we change the limit of
integration from $C |\log( |\eps| )|$ to $\infty$.

The computation for the reversed dynamics gives us that
\begin{equation}\label{firstorderscattering2}
\begin{split}
I({{\tilde x}}^-) - I({{\tilde x}})
= &-\eps \int_{-\infty}^{0}  (\X^1 I)( \tilde{\phi}^\sigma_0({{\tilde x}})
)-(\X^1 I)(\tilde{\phi}^\sigma_0(\tilde{\Omega}^\st_0({{\tilde x}})))\, d\sigma\\& +
O_{C^{k}}(\eps^{1+\varrho})\\
= &-\eps \int_{-\infty}^{0}  [I, h]( \tilde{\phi}^\sigma_0({{\tilde x}})
)-[I,h](\tilde{\phi}^\sigma_0(\tilde{\Omega}^\st_0({{\tilde x}})))\, d\sigma\\& +
O_{C^{k}}(\eps^{1+\varrho}).
\end{split}
\end{equation}

As in the theory of the scattering map (See Appendix \ref{sec:scatteringnhim})
it natural to compute the change of action between the
asymptotic orbit in the future and the asymptotic orbit in the past.
Running the same argument backwards, we obtain in the Hamiltonian case.
\begin{equation}\label{firstorderscattering}
\begin{split}
I({{\tilde x}}^+) - I({{\tilde x}}^-) &=
\eps [I,\widetilde{\mathcal{M}}](
\tau^*(I, \vphi,t), I,
\vphi,t)+ O_{C^{k}}(\eps^{1+\varrho}),
\end{split}
\end{equation}
where $\tilde{\mathcal{M}}$ is the same Melnikov potential which appeared
in the computations of the transverse
intersection, and $\tau^*$ is the parametrization of the chosen
intersection. The convergence of the above integral follows from
simple estimates as in \cite{DelshamsLS03}.

Since \[[I,\widetilde{\mathcal{M}}]=\frac{\partial I}{\partial
\vphi}\frac{\partial \widetilde{\mathcal{M}}}{\partial I}-\frac{\partial
I}{\partial I}\frac{\partial\widetilde{\mathcal{M}}}{\partial
\vphi}=-\frac{\partial \widetilde{\mathcal{M}}}{\partial \vphi},\] and, since by \eqref{eqn:m_equiv} we have
\[
\widetilde{\mathcal{M}}(\tau^*(I,\vphi,t),I,\vphi,t)= \widetilde{\mathcal{M}}^*(I,\vphi-t\omega(I)),\]
we obtain  the desired conclusion
 that the hypothesis \textbf{H4}
implies the estimate from the statement of Theorem \ref{prop:melnikov_jump}.

A different perspective on the formula \eqref{firstorderscattering}
presented from the point of
view of the scattering map is in Appendix~\ref{sec:scatteringnhim}.
The theory of the scattering map actually gives some more
information since it allows also to compute the effects of homoclinic
excursions in
the slow variables. On the other hand, the theory presented here
relies exclusively on the fact that some of the coordinates are
slow.

\subsection{Proof of Proposition  \ref{prop:genericity} }\label{section:genericity}

The standing assumption for this section is that the perturbation is Hamiltonian  as in  \eqref{eqn:h}, and periodic, i.e., $t\in\torus$.

The facts that the conditions \textbf{H3} and \textbf{H4} are
$C^{k}$-open follow from standard stability arguments. For the
genericity part, the only non-trivial issue is to argue that the
Melnikov potential $\tilde{\mathcal{M}}$ always has critical points.
In such a case, a $C^{\infty}$ small perturbation
can make them transversal.

In the case when $n=1$, existence of
critical points  is discussed in \cite{DelshamsLS03}
(we will go over the argument at the end of this section).
For the higher dimensional case, we will argue that, in some
regime (the widely separated $tau_i$) we can consider
the problem as a sum of independent one dimensional cases.
Physically, this is clear because the jumps executed by a pendulum
are localized in time. So that, the effects of two jumps at widely
different times is independent.
Thus, showing the existence
of critical point for $\widetilde{\mathcal{M}}$ is reduced to  the
$1$-dimensional case. The argument presented here
is very conservative and, besides the critical points produced
by the above procedure, we expect that there are many more.

In the formal statement,  of the genericity results, we are including
the assumption that the perturbation is periodic. The main reason
is to be able to quote \cite{DelshamsLS03} for the genericity arguments
in the one-dimensional case. The part of the argument
that shows that the total Melnikov function is the sum of the individual
ones works without any change for more general time dependence of
the perturbations.

\subsubsection{Preliminary lemmas}

Define a Melnikov potential $\widetilde{\mathcal{M}}_i:\mathbb{R}\times
\mathcal{I}^*\times\mathbb{T}^d\times\mathbb{T}\to \mathbb{R}$
corresponding to the $i$ pendulum by
\begin{equation*}\begin{split}
\widetilde{\mathcal{M}}_i&(\varsigma,I,\vphi,t) \\ &=-\int_{-\infty}^{\infty}\left
[h(0, \ldots, p^0_i(\varsigma+\sigma),q^0_i(\varsigma +\sigma),
\ldots,0, I,\vphi+\sigma\omega(I),t+\sigma;0)\right .\\ &\qquad\qquad\left
.-h(0,\ldots,0, I,\vphi+\sigma\omega(I),t+\sigma;0)\right ]d\sigma.\end{split}
\end{equation*}

This Melnikov potential $\widetilde{\mathcal{M}}_i$ is a tool to measure
the splitting of the stable and
unstable manifolds for the equilibrium point of the $i$-th
pendulum when the other penduli have been set to rest.
As we have seen, the gradient of this function gives
the leading term in $\eps$ of the difference of the functions
whose graphs are the stable and unstable manifolds of
the stable manifold.

By making $\mathcal{I}^*$ slightly smaller   we can assume
that it is a closed ball and $\tau^*(I,\vphi,t)$ is defined for all $I\in\mathcal{I}^*$ and $(\vphi,t)\in\mathcal{O}$.

As indicated above,
It will be easy to show that the $\widetilde{\mathcal{M}}_i$
have non-degenerate zeros. See Lemma~\ref{lem:gen1}.
  Then,  we will argue that if all the  $\tau_i$
are very different from each other, the total Melnikov function is
close (in a smooth sense) to the sum of the
$\widetilde{\mathcal{M}}_i$. See  Lemma~\ref{lem:gen2}.
  Hence, we can get the existence of
non-degenerate zeros.

We expect that the above argument could work
for more general forms of the time dependence. The fact
that for widely  separated $\tau_i$, the Melnikov function is approximately
the sum of
of the partial  Melnikov functions is very clear. The fact that the
partial Melnikov functions have non-degenerate critical points is
very plausible for more general dependence, but it is very
cumbersome to write.

 We also believe that, besides the non-degenerate critical
points produced in the present argument, there are others.

\begin{lem}\label{lem:gen1}
If  for each $I\in \mathcal{I}^* $, the map $(\varsigma,\vphi,\sigma)\to
\widetilde{\mathcal{M}}_i(\varsigma,I,\vphi,\sigma)$ is not
constant on any compact disk in $\mathbb{T}^d\times\mathbb{T}$, then $\widetilde{\mathcal{M}}_i$ has at least one critical point
with respect to~$\varsigma$.
\end{lem}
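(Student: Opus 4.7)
The plan is to exploit the time-translation invariance of the unperturbed flow to reduce the problem to finding an extremum of a smooth function on a compact manifold. Specifically, by the change of integration variable $\sigma \mapsto \sigma - \varsigma$ in the defining integral of $\widetilde{\mathcal{M}}_i$, one immediately obtains the flow-invariance identity
\[
\widetilde{\mathcal{M}}_i(\varsigma, I, \vphi, t) \;=\; F_I\bigl(\vphi - \varsigma\omega(I),\; t - \varsigma\bigr),
\]
where $F_I(\vphi, t) := \widetilde{\mathcal{M}}_i(0, I, \vphi, t)$. By Proposition~\ref{prop:melnikov_int} the function $F_I$ is $C^k$, and since we are in the periodic regime $t \in \mathbb{T}$, it descends to a smooth function on the compact manifold $\mathbb{T}^d \times \mathbb{T}$.

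Next, I would observe that the hypothesis forces $F_I$ to be non-constant as a function on $\mathbb{T}^d \times \mathbb{T}$. Indeed, if $F_I$ were constant then the flow-invariance above would make $\widetilde{\mathcal{M}}_i$ constant everywhere, contradicting the assumption of non-constancy on any compact disk. By continuity and compactness, $F_I$ therefore attains its maximum at some point $(\vphi_0, t_0) \in \mathbb{T}^d \times \mathbb{T}$, at which necessarily $\nabla_{(\vphi, t)} F_I(\vphi_0, t_0) = 0$.

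Finally, differentiating the flow-invariance identity with respect to $\varsigma$ and evaluating at $\varsigma = 0$, $(\vphi, t) = (\vphi_0, t_0)$ yields
\[
\frac{\partial \widetilde{\mathcal{M}}_i}{\partial \varsigma}(0, I, \vphi_0, t_0) \;=\; -\,\omega(I) \cdot \nabla_\vphi F_I(\vphi_0, t_0) \;-\; \partial_t F_I(\vphi_0, t_0) \;=\; 0,
\]
so $\varsigma^* = 0$ is a critical point of $\varsigma \mapsto \widetilde{\mathcal{M}}_i(\varsigma, I, \vphi_0, t_0)$, which is the desired conclusion. The argument is really just the translation invariance together with compactness of the torus; there is no genuine obstacle, and the only mild care needed is in checking that the ``non-constant on any compact disk'' hypothesis indeed rules out the degenerate case $F_I \equiv \mathrm{const}$, which I handled by reversing the invariance.
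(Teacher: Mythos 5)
Your argument is clever and, for the literal reading of the statement, mathematically correct. You derive the same key identity as the paper,
\[
\widetilde{\mathcal{M}}_i(\varsigma,I,\vphi,t)=\widetilde{\mathcal{M}}_i(0,I,\vphi-\varsigma\omega(I),t-\varsigma),
\]
but then take a shortcut the paper does not: you maximize the right-hand side as a function $F_I$ on the compact torus $\mathbb{T}^d\times\mathbb{T}$, and note that the derivative of $\varsigma\mapsto\widetilde{\mathcal{M}}_i(\varsigma,I,\vphi_0,t_0)$ at $\varsigma=0$ is a directional derivative of $F_I$ at that interior maximum, hence zero. That is a clean observation and does produce a critical point. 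It also has the mild advantage of avoiding the case split between rational and irrational $\omega(I)$ that the paper makes.

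However, this is a genuinely different (and weaker) argument than the paper's, and it establishes less than what the lemma is actually supposed to deliver. Your proof produces a critical point only at the single special location $(\vphi_0,t_0)$ where $F_I$ attains its global maximum, and only at $\varsigma=0$. The paper, by contrast, fixes an \emph{arbitrary} $(\vphi,t)$ (in an appropriate ball), follows the orbit $\varsigma\mapsto(\vphi-\varsigma\omega(I),t-\varsigma)$, and uses recurrence of the linear flow to produce an infinite sequence of disjoint intervals $J_j$, each containing a critical point, together with the quantitative separation bound
\[
\widetilde{\mathcal{M}}_i(\varsigma,I,\vphi,t)\big|_{\varsigma\in\partial J_j}
<\sup\{\widetilde{\mathcal{M}}_i(\varsigma,I,\vphi,t)\,|\,\varsigma\in J_j\}-2\upsilon .
\]
This stronger, orbit-by-orbit statement is exactly what the proof of Lemma~\ref{lem:gen3} cites without re-deriving, and the uniform gap $2\upsilon$ is what allows the $\Delta T$-separated intervals for the different penduli to be combined via Lemma~\ref{lem:gen2}. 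In particular, the downstream arguments require critical points for an \emph{open set} of $(I,\vphi,t)$ (so that $\tau^*$ can be obtained as an implicit function), which your one-point conclusion does not provide: a critical point of the one-dimensional slice through a nearby $(\vphi,t)$ need not exist near $\varsigma=0$ without additional non-degeneracy. So while your proposal settles the literal statement, it does not reproduce the quantitative and quantifier-strengthened content that Lemma~\ref{lem:gen1}'s proof is meant to supply, and substituting it in would break Lemma~\ref{lem:gen3}.
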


\begin{proof}
The hypothesis means that, for each fixed $I$ and each compact disk $B\subseteq \mathbb{T}^d\times\mathbb{T}$,
the mapping $(\varsigma,\vphi,t)\in \mathbb{R}\times B\mapsto \widetilde{\mathcal{M}}_i(\varsigma,I,\vphi,\sigma)\in\mathbb{R}$ is not identically constant.

By performing  a change of variable $\sigma=\sigma+\sigma'$ we first notice that
\[\widetilde{\mathcal{M}}_i(\varsigma,I,\vphi,t)=\widetilde{\mathcal{M}}_i(\varsigma+\sigma',I,\vphi+\sigma'\omega(I),t+\sigma'),\]
for any $\sigma'\in \mathbb{R}$. If we choose $\sigma'=-\varsigma$, we obtain
\begin{equation}\label{melnikovtranslation1}
\widetilde{\mathcal{M}}_i(\varsigma,I,\vphi,t)
=\widetilde{\mathcal{M}}_i(0,I,\vphi-\varsigma\omega(I),t-\varsigma).\end{equation}

 Fix $I\in\mathcal{I}^*$.

Assume first that each frequency $\omega_{j}(I)$, $j=1,\ldots, d$, is a rational number. Then there exists $\nu\in\mathbb{Z}\setminus\{0\}$, such that each $\nu\omega_j(I)$ is a vector of integer components.
Then \eqref{melnikovtranslation1} for $\varsigma=\kappa\nu$,
$\kappa\in\mathbb{Z}$, leads to
\[
\widetilde{\mathcal{M}}_i(\kappa\nu,I,\vphi,t)=\widetilde{\mathcal{M}}_i(0,I,\vphi-\kappa\nu\omega(I),t-\kappa\nu)=
\widetilde{\mathcal{M}}_i(0,I,\vphi,t),
\]
since the variables $\vphi$ and $\sigma$ are defined (mod $1$).
This means that $\widetilde{\mathcal{M}}_i$ is periodic in $\varsigma$ with a
period of $\nu$. It follows that the restriction of $\widetilde{\mathcal{M}}_i$
to each of the compact domains $[\kappa\nu,(\kappa+1)\nu]\times \mathcal{I}^*\times
\mathbb{T}^d\times \mathbb{T}$, with $\kappa\in\mathbb{Z}$, has a
critical point (see Remark \ref{rem:trasninter}).

We now assume that at least one $\omega_j(I)$ is an irrational number.
Then the linear flow
$\varsigma\in\mathbb{R}\to(\vphi-\varsigma\omega(I),t-\varsigma)\in\mathbb{T}^d\times\mathbb{T}$ fills up densely
some $d^\prime$-dimensional torus $\mathcal{T}_{I}$ in $\mathbb{T}^d\times\mathbb{T}$, with $d^\prime\leq d+1$.
(In the case when $\omega(I)$ is non-resonant, that is $\bar\kappa\cdot\omega(I)+\kappa_{d+1}\neq 0$ for all $\bar\kappa\in\mathbb{Z}^d\setminus\{0\}$, $\kappa_{d+1}\in\mathbb{Z}$,  the linear flow fills up the whole
$\mathbb{T}^d\times\mathbb{T}$.)
Choose a $(d+1)$-dimensional ball $\mathcal{B}\subseteq \mathbb{T}^d\times\mathbb{T}$
with $\mathcal{B}\cap\mathcal{T}_I\neq\emptyset$ on which the map $(\varsigma,\vphi,\sigma)\to
\widetilde{\mathcal{M}}_i(\varsigma,I,\vphi,\sigma)$ is non-constant. Then, there exist $\upsilon_0\in\mathbb{R}$,
$\upsilon>0$ and some open sets $\mathcal{U}^-$, $\mathcal{U}^+$
in $\mathcal{B}$ such that
$\widetilde{\mathcal{M}}_i(\{0\}\times\mathcal{U}^-)\subseteq (-\infty, \upsilon_0-\upsilon)$, and
$\widetilde{\mathcal{M}}_i(\{0\}\times\mathcal{U}^+)\subseteq (\upsilon_0+\upsilon, +\infty)$.
Choose and fix $(\vphi, \sigma)\in\mathcal{B}$. Since the flow
$\varsigma\in\mathbb{R}\to(\vphi-\varsigma\omega(I), t
-\varsigma)\in\mathbb{T}^d\times\mathbb{T}$ fills up densely $\mathcal{T}_I$, it
follows that the trajectory of the flow will visit each of the
open sets $\mathcal{U}^-,  \mathcal{U}^+$ infinitely many times.
More precisely, there exits a sequence of times
\[\ldots<\varsigma_{j-1}^+<\varsigma_j^-<\varsigma_j^+<\varsigma_{j+1}^-<\ldots \]
with $\varsigma_j^{-,+}\to \pm\infty$ as $j\to\pm\infty$, such
that, for each $j\in\mathbb{Z}$ we have
\[\begin{split}
(\vphi-\varsigma_j^-\omega(I), t-\varsigma_j^-)&\in
\mathcal{U}^-,\\(\vphi-\varsigma_j^+\omega(I), t-\varsigma_j^+)&\in
\mathcal{U}^+.
\end{split}\]

Therefore, there exists a sequence of intervals
$J^i_j=[\varsigma^-_j,\varsigma^-_{j+1}]$, moving from $-\infty$
to $+\infty$, such that
\[\widetilde{\mathcal{M}}_i(\varsigma,I,\vphi,t)_{\mid {\varsigma\in\partial
J_j^i}}<\sup\{\widetilde{\mathcal{M}}_i(\varsigma,I,\vphi,t)\,|\,\varsigma\in
J_j^i \}-2\upsilon,
\]
since  values of $\widetilde{\mathcal{M}}_i$ on the boundary are less than
$\upsilon_0-\upsilon$ and the supremum of $\widetilde{\mathcal{M}}_i$ on the interval is more
than $\upsilon_0+\upsilon$. This implies that $\widetilde{\mathcal{M}}_i$ has a critical point
(relative to the variable $\varsigma$) in each interval $J^i_j$ (see
Remark \ref{rem:trasninter}). The critical point  yields the
$\varsigma$ coordinate  as an implicit function of $(I,\vphi, \sigma)$.

We want to emphasize that, in the above argument, the number $\upsilon>0$
is independent of the choice of the intervals.
\end{proof}

\begin{lem}\label{lem:gen2}
For every $\upsilon >0$ and every $(I,\vphi,t)$ there exists
$\Delta T>0$ such that for every $(\tau_1,\ldots,\tau_n)$ with
$0<\Delta T<\tau_{i+1}-\tau_i$ for all $i=0,\ldots, n-1$ we have
\begin{eqnarray*}\left|\widetilde{\mathcal{M}}(\tau_1,\ldots,
\tau_n,I,\vphi,t)- \left(\widetilde{\mathcal{M}}_1(\tau_1,I,\vphi,t)+\ldots
+\widetilde{\mathcal{M}}_n(\tau_n,I,\vphi,t\right))\right|<\upsilon.
\end{eqnarray*}
\end{lem}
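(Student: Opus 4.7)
The approach is to separate the integrand of $\widetilde{\mathcal{M}}-\sum_i \widetilde{\mathcal{M}}_i$ into single-pendulum contributions (which cancel by construction) and quadratic ``cross-terms,'' and then exploit the fact that each pendulum's homoclinic orbit $(p^0_i(\tau_i+\sigma),q^0_i(\tau_i+\sigma))$ is exponentially localized near its own jump time $\sigma=-\tau_i$. When the $\tau_i$ are widely spaced, the cross-term integrals then decay exponentially in the minimum gap $\Delta T$.

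First I would fix $(I,\vphi,t)$ and set $\xi_j(\sigma)=(p^0_j(\tau_j+\sigma),q^0_j(\tau_j+\sigma))$. Viewing $h$ as a function of the pendulum pairs $\xi=(\xi_1,\ldots,\xi_n)$ with $(I,\vphi+\sigma\omega(I),t+\sigma)$ held implicit, define
\[
G(s)=h(s\xi)-h(0)-\sum_{j=1}^n\bigl[h(s\xi^{(j)})-h(0)\bigr],
\]
where $\xi^{(j)}$ equals $\xi_j$ in slot $j$ and is zero elsewhere. Computing $G'(s)$ and applying a first-order Taylor expansion of $\partial_j h(s\xi)-\partial_j h(s\xi^{(j)})$ in the off-coordinates produces the bilinear remainder
\[
|G(1)|\;\le\;C\,\|h\|_{C^2}\sum_{j<k}|\xi_j(\sigma)|\,|\xi_k(\sigma)|,
\]
which is finite by the boundedness assumption in \textbf{H1.2}. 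This reduces the problem to bounding $\int_{\mathbb{R}}|\xi_j(\sigma)|\,|\xi_k(\sigma)|\,d\sigma$ for each pair $j<k$.

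By \textbf{H2} (more precisely, by the definition of $\lambda_+$ from the linearization at the saddles of the penduli), each homoclinic orbit satisfies the uniform bound $|\xi_j(\sigma)|\le Ce^{-\lambda_+|\tau_j+\sigma|}$. A direct computation, splitting the integrand at the two kinks, yields
\[
\int_{\mathbb{R}}e^{-\lambda_+(|\tau_j+\sigma|+|\tau_k+\sigma|)}\,d\sigma\;=\;\Bigl(\tfrac{1}{\lambda_+}+|\tau_j-\tau_k|\Bigr)e^{-\lambda_+|\tau_j-\tau_k|}.
\]
The hypothesis $\tau_{i+1}-\tau_i>\Delta T$ forces $|\tau_j-\tau_k|\ge|j-k|\Delta T$ (after WLOG ordering the $\tau_i$). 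For $\Delta T>1/\lambda_+$, the function $x\mapsto (1+x)e^{-\lambda_+ x}$ is monotone decreasing on $[\Delta T,\infty)$, so summing a geometric series in $|j-k|$ bounds the total cross-term contribution by $C\,n^2\,(1+\Delta T)\,e^{-\lambda_+\Delta T}$, which tends to $0$ as $\Delta T\to\infty$. Choosing $\Delta T$ large enough makes this less than $\upsilon$.

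The main obstacle, and where the argument might naïvely appear to fail, is that a pointwise estimate on $|\xi_j(\sigma)||\xi_k(\sigma)|$ is not small (each factor is of order one near its own peak), while the integration domain is all of $\mathbb{R}$; a ``pointwise bound times length of interval'' approach cannot work. The rescue is the explicit integral identity above, which captures that the two exponentially-localized bumps peaked at $-\tau_j$ and $-\tau_k$ have a product whose full $L^1$ norm decays like $(1+|\tau_j-\tau_k|)e^{-\lambda_+|\tau_j-\tau_k|}$. This quantitative $L^1$ bound, coupled with the multilinear cancellation in Step 1 that leaves only \emph{pairwise} products (and not an $n$-fold product that would merely give a weaker bound), is the crux of the lemma.
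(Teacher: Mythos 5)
Your proof is correct, but it takes a genuinely different route from the one in the paper. The paper's argument uses only a \emph{first-order} Lipschitz estimate on $h$ (via the mean value theorem) and compensates by cleverly partitioning $\mathbb{R}$ into $n+2$ intervals $L_0,\dots,L_{n+1}$; on each interval at most one pendulum is far from rest, so on $L_k$ one may replace $h(p^0(\tau+\sigma),q^0(\tau+\sigma),*)$ by $h(0,\dots,p^0_k(\tau_k+\sigma),q^0_k(\tau_k+\sigma),\dots,0,*)$ at an exponentially small cost, and the tail integrals $\int_{\mathbb{R}\setminus L_k}$ of the single-pendulum Melnikov integrands are small for the same reason. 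Your approach instead performs a \emph{second-order} Taylor/telescoping expansion of the function $G(s)=h(s\xi)-h(0)-\sum_j[h(s\xi^{(j)})-h(0)]$, which kills the linear terms exactly and isolates the bilinear cross-terms $|\xi_j||\xi_k|$; it then bounds the resulting integrals $\int|\xi_j||\xi_k|\,d\sigma$ in closed form. The trade-off is transparent: you need one more derivative of $h$ (a $C^2$ bound rather than $C^1$, which H1.2 does supply), but in return you avoid any interval decomposition and get a clean algebraic identity showing exactly how the cross-term integral decays, $(\tfrac{1}{\lambda_+}+|\tau_j-\tau_k|)e^{-\lambda_+|\tau_j-\tau_k|}$, making the dependence on the gaps completely explicit. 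Two small technical remarks: (i) for the path $s\mapsto s\xi$ you implicitly assume the domain $\OO_1$ is star-shaped about the origin; strictly this may fail for a tubular neighborhood of the separatrices, but the cut-off extension of $h$ constructed in Appendix~\ref{sec:extension} (or a simple modification of the path) disposes of the issue; (ii) the final constant is closer to $n\,(\tfrac{1}{\lambda_+}+\Delta T)e^{-\lambda_+\Delta T}$ than to your stated $n^2(1+\Delta T)e^{-\lambda_+\Delta T}$ once you notice the sum over $m=|j-k|$ is dominated by a convergent series, but since you only need a bound that tends to $0$ as $\Delta T\to\infty$ this does not affect the argument.
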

\begin{proof} Let $(I,\vphi,t;\eps)$ be fixed.
Let $K$ the maximum of the norm of the Jacobi matrix of $h$ with
respect to $(p,q)$ on a path connected  and compact neighborhood
$\mathcal{N}$ of the homoclinic manifold $\Sigma^*$. There exists
$T_1,T_2\in\mathbb{R}$ such that
\[\begin{split}&C\frac{\max\{\lambda^+,1/\mu^-\}^{T_1}}{\ln\left(\max\{\lambda^+,1/\mu^-\}\right) }
<\frac{\upsilon}{4n(n+2)K},
 \textrm { and}\\ &C\frac{\max\{\mu^+,1/\lambda^-\}^{-T_2}}{\ln\left(\max\{\mu^+,1/\lambda^-\}\right) }
<\frac{\upsilon}{4n(n+2)K}.\end{split}\] Let $\Delta T=T_2-T_1$,
and let $(\tau_1,\ldots,\tau_n)$ be as above. Then the  real axis
can be split into $(n+2)$ intervals such that on the first and
last intervals all penduli are close to rest, and on each of the
intermediate $n$ intervals there is exactly one pendulum that
makes wide swings, while the rest of the penduli are close to
rest. These intervals are given by
\[
\begin{split}
&L_0=(-\infty,T_1-\tau_n],\, L_1=[T_1-\tau_n,
T_1-\tau_{n-1}],\,
\ldots\\
& \ldots, L_{k}=[T_1-\tau_{n-k+1}, T_1-\tau_{n-k}],\ldots \\
&L_{n-1}=[T_1-\tau_{2}, T_1-\tau_{1}],L_{n}=[T_1-\tau_{1},+\infty).
\end{split}
\]

When $\sigma\in L_0 $  we have that
$(p^0_i(\tau_i+\sigma),q^0_i(\tau_i+\sigma))\approx(0,0)$
for all $i=1,\ldots,n$; when $\sigma\in L_k$,
$k=2,\ldots,n$  we have that
$(p^0_i(\tau_i+\sigma),q^0_i(\tau_i+\sigma))\approx (0,0)$ for all
$i=1,\ldots,n$ with $i\neq k$; when $\sigma\in L_n$ we have that
$(p^0_i(\tau_i+\sigma),q^0_i(\tau_i+\sigma))\approx (0,0)$  for
all $i=2,\ldots,n$; when  $\sigma\in L_{n+1} $  we have that
$(p^0_i(\tau_i+\sigma),q^0_i(\tau_i+\sigma))\approx (0,0)$ for all
$i=1,\ldots,n$.  More precisely, we claim that the following
inequalities hold true
\begin{equation}\label{claim1}
\int_{L_0}|h(p^0(\tau+\sigma),q^0(\tau+\sigma),*)-h(0,\ldots,0,I,*)|d\sigma<\frac{\upsilon}{2(n+2)},
\end{equation}
\begin{equation}\label{claim2}\begin{split}
&\int_{L_k}|h((p^0(\tau+\sigma),q^0(\tau+\sigma),*)\\&\qquad-h(0,\ldots,0,p^0_k(\tau_k+\sigma),0,
q^0_k(\tau_k+\sigma),\ldots,*)|d\sigma\\
&\quad<\frac{\upsilon}{2(n+2)}, \textrm { for all } k=1,\ldots,n,
\end{split}
\end{equation}
\begin{equation}\label{claim3} \int_{L_{n+1}} |h(p^0(\tau+\sigma),q^0(\tau+\sigma),*)
-h(0,\ldots,0,*)|d\sigma<\frac{\upsilon}{2(n+2)},\end{equation}
\begin{equation}\label{claim4}\begin{split}\int_{\mathbb{R}\setminus
L_k}|h(0,\ldots,p^0_k(\tau_k+\sigma),q^0_k(\tau_k+\sigma),\ldots,0,*)-h(0,\ldots,0,*)|d\sigma
\\<\frac{\upsilon}{2(n+2)} \textrm { for all } k=1,\ldots,n,\end{split}
\end{equation}
where $*=(I,\vphi+\sigma\omega (I), t +\sigma;0)$.

We are going to argue, for example \eqref{claim2}; the other
inequalities will follow similarly. Applying the mean value theorem
along smooth curves $\gamma$ in $\mathcal{N}$ with $\gamma(0)={\tilde x}$ and
$\gamma (1)={\tilde y}$, we obtain
\[\|f({\tilde x})-f({\tilde y})\|\leq\|f'\|_{C^0}\inf_{\gamma}\int _0^1 |\gamma'(t)|dt\leq C\|{\tilde x}-{\tilde y}\|\]
for some $C>0$, for all ${\tilde x},{\tilde y}\in\mathcal{N}$. Applying this
inequality to the mapping $(p,q)\to h(p,q,*)$ for the pair of points
${\tilde x}=(p^0(\tau+\sigma),q^0(\tau+\sigma))$,
${\tilde y}=(0,\ldots,p^0_k(\tau_k+\sigma),q^0_k(\tau_k+\sigma),\ldots,0)$,
we obtain
\[\begin{split}|h((p^0(\tau+\sigma),q^0(\tau+\sigma),*)-
h(0,\ldots,p^0_k(\tau_k+\sigma),q^0_k(\tau_k+\sigma),\ldots,0,*)|\\
\leq K\sum_{i\neq
k}\|\left(p^0_i(\tau_k+\sigma),q^0_i(\tau_k+\sigma)\right)\|.\end{split}\]
When $\sigma\in L_k$ we have that
\[\|\left(p^0_i(\tau_i+\sigma),q^0_i(\tau_i+\sigma)\right)\|\leq C\frac{\max\{\lambda^+,1/\mu^-\}^{T_1}}{\ln\left(\max\{\lambda^+,1/\mu^-\}\right) }
\] if $i<k$ and \[\|\left(p^0_i(\tau_i+\sigma),q^0_i(\tau_i+\sigma)\right)\|\leq C\frac{\max\{\mu^+,1/\lambda^-\}^{-T_2}}{\ln\left(\max\{\mu^+,1/\lambda^-\}\right) }
\] if $i>k$. This implies that
\[\int_{L_k}K\|\left(p^0_i(\tau_k+\sigma),q^0_i(\tau_k+\sigma)\right)\|d\sigma\leq
\frac{\upsilon}{4n(n+2)}\] for each $i\neq k$. Adding these
inequalities for all $i\neq k$ yields \eqref{claim2}.

Using the inequalities \eqref{claim1}, \eqref{claim2},
\eqref{claim3} and \eqref{claim4},  we estimate
\begin{eqnarray*}
\lefteqn{\left|\widetilde{\mathcal{M}}(\tau_1,\ldots, \tau_n,I,\vphi,t)-
\left(\widetilde{\mathcal{M}}_1(\tau_1,I,\vphi,t)+\ldots
+\widetilde{\mathcal{M}}_n(\tau_n,I,\vphi,t\right))\right|}\\
&& =\left |\sum_{k=0}^{n+1} \int_{L_k} [h(p^0(\tau+\sigma),
q^0(\tau+\sigma),*)-h(0,\ldots ,0,*)]d\sigma\right.
\\&&\phantom{AAA} \left.-\sum_{k=1}^{n}
\int_{-\infty}^{\infty}[h(0,\ldots,p^0_k(\tau_k+\sigma),q^0_k(\tau_k+\sigma),\ldots,0,*)
-h(0,\ldots ,0,*)]d\sigma\right|\\&& \le
\frac{2\upsilon}{2(n+2)}+\left |\sum_{k=1}^{n} \int_{L_k}
[h(p^0(\tau+\sigma),
q^0(\tau+\sigma),*)-h(0,0,*)]d\sigma\right.\\&&
\phantom{AAA} \left.-\sum_{k=1}^{n}
\int_{-\infty}^{\infty}[h(0,\ldots,p^0_k(\tau_k+\sigma),q^0_k(\tau_k+\sigma),\ldots,0,*)
-h(0,\ldots ,0,*)]d\sigma\right|\\&& \le
\frac{2\upsilon}{2(n+2)}+\left |\sum_{k=1}^{n} \int_{L_k}
[h(p^0(\tau+\sigma),
q^0(\tau+\sigma),*)-h(0,0,*)]d\sigma\right.\\&&
\phantom{AAA} \left.-\sum_{k=1}^{n}
\int_{L_k}[h(0,\ldots,p^0_k(\tau_k+\sigma),q^0_k(\tau_k+\sigma),\ldots,0,*)
-h(0,\ldots ,0,*)]d\sigma\right|\\&&
\phantom{AAA} +\left |\sum_{k=1}^{n}
\int_{\mathbb{R}\setminus
L_k}[h(0,\ldots,p^0_k(\tau_k+\sigma),q^0_k(\tau_k+\sigma),\ldots,0,*)
-h(0,\ldots ,0,*)]d\sigma\right|\\&& \le
\frac{2\upsilon}{2(n+2)}+\sum_{k=1}^{n} \int_{L_k} \left
|h(p^0(\tau+\sigma),
q^0(\tau+\sigma),*)\right.\\&&\qquad\qquad\qquad\quad\left.
-h(0,\ldots,p^0_k(\tau_k+\sigma),q^0_k(\tau_k+\sigma),\ldots,0,*)
\right|d\sigma+\frac{n\upsilon}{2(n+1)}\\&&\le
\frac{2\upsilon}{2(n+2)}+\frac{n\upsilon}{2(n+1)}+\frac{n\upsilon}{2(n+1)}=\upsilon.
\end{eqnarray*}
This concludes the proof of the lemma.
\end{proof}

\begin{lem}\label{lem:gen3}
If  for each $I\in\mathcal{I}^*$ the map $(\tau,\vphi,t)\to
\widetilde{\mathcal{M}}(\tau,I,\vphi,t)$ is not constant over any compact disk in $\mathbb{T}^d\times\mathbb{T}$, then
$\widetilde{\mathcal{M}}$ has at least one critical point with respect to
$\tau$.
\end{lem}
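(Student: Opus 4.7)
The plan is to combine Lemmas \ref{lem:gen1} and \ref{lem:gen2} by trapping an extremum of $\widetilde{\mathcal{M}}$ inside a compact box in $\tau$-space on which $\widetilde{\mathcal{M}}$ is a small perturbation of the decoupled sum $\sum_i \widetilde{\mathcal{M}}_i$.

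First I would observe that the hypothesis on $\widetilde{\mathcal{M}}$ forces each partial potential $\widetilde{\mathcal{M}}_i$ to satisfy the hypothesis of Lemma \ref{lem:gen1}. Indeed, if some $\widetilde{\mathcal{M}}_{i_0}$ were constant on a compact disk in $\mathbb{R} \times \mathbb{T}^d \times \mathbb{T}$, then by Lemma \ref{lem:gen2} applied with tolerance $\upsilon \to 0$ and with the other $\tau_j$ ($j\ne i_0$) pushed arbitrarily far apart, $\widetilde{\mathcal{M}}$ would be uniformly close to a sum in which the $i_0$-th term is locally constant; combined with the translation identity \eqref{melnikovtranslation1}, this would force $\widetilde{\mathcal{M}}$ itself to be constant on a corresponding compact disk, contradicting the hypothesis. (Indices for which $\widetilde{\mathcal{M}}_i$ turns out to have no genuine $\tau_i$-dependence can be discarded from the product construction below without changing the argument.)

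Next, for fixed $I \in \mathcal{I}^*$ and fixed $(\vphi,t)$, I would apply Lemma \ref{lem:gen1} to each $\widetilde{\mathcal{M}}_i$ to obtain a bounded interval $J^i \subset \mathbb{R}$ and a constant $\upsilon_i > 0$ such that $\widetilde{\mathcal{M}}_i(\cdot, I, \vphi, t)$ attains its supremum on $J^i$ at an interior point, exceeding its values on $\partial J^i$ by at least $2\upsilon_i$. Inspection of the proof of Lemma \ref{lem:gen1} shows that infinitely many such intervals exist, extending to $\pm\infty$, and that each $\upsilon_i$ is independent of the choice of interval. Setting $\upsilon = \min_i \upsilon_i > 0$ and letting $\Delta T = \Delta T(\upsilon/2)$ be the separation provided by Lemma \ref{lem:gen2}, I would then translate the chosen intervals $J^1, \dots, J^n$ so that $\inf J^{i+1} - \sup J^i > \Delta T$ for every $i$.

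On the compact box $B = J^1 \times \cdots \times J^n$, Lemma \ref{lem:gen2} yields
\[
\Bigl| \widetilde{\mathcal{M}}(\tau, I, \vphi, t) - \sum_{i=1}^n \widetilde{\mathcal{M}}_i(\tau_i, I, \vphi, t) \Bigr| < \upsilon/2.
\]
The sum $\sum_i \widetilde{\mathcal{M}}_i$ attains its supremum on $B$ at the interior point $\tau^* = (\tau_1^*, \dots, \tau_n^*)$ formed by concatenating the interior maxima on each $J^i$, and this value exceeds the values of the sum on $\partial B$ by at least $2\upsilon$. Consequently the supremum of $\widetilde{\mathcal{M}}$ on $B$ exceeds the supremum of $\widetilde{\mathcal{M}}$ over $\partial B$ by at least $\upsilon$, so it is attained strictly in the interior of $B$, producing a critical point of $\tau \mapsto \widetilde{\mathcal{M}}(\tau, I, \vphi, t)$.

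The main obstacle is the first step: correctly bridging the non-constancy hypothesis, which is stated for the full $\widetilde{\mathcal{M}}$, with the per-coordinate hypothesis required by Lemma \ref{lem:gen1}. Once that inheritance is established, the rest is a soft topological argument: a $C^0$-small perturbation of a function with a strict interior maximum on a compact set still has an interior maximum, hence a critical point.
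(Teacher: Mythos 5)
Your proof follows the same route as the paper: apply Lemma~\ref{lem:gen1} to each $\widetilde{\mathcal{M}}_i$, place the resulting intervals far apart, invoke Lemma~\ref{lem:gen2} to compare $\widetilde{\mathcal{M}}$ with the decoupled sum $\sum_i \widetilde{\mathcal{M}}_i$, and trap an interior maximum of $\widetilde{\mathcal{M}}$ in the resulting compact box. That mechanism is the right one.

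However, your bridging argument --- that the non-constancy hypothesis on the full $\widetilde{\mathcal{M}}$ forces each partial potential $\widetilde{\mathcal{M}}_{i}$ to satisfy the hypothesis of Lemma~\ref{lem:gen1} --- does not hold. Suppose $\widetilde{\mathcal{M}}_{i_0}$ is constant on $\mathbb{R}\times D$ for some compact disk $D$ (say identically zero, as would happen if $h$ did not involve the $i_0$-th pendulum at all). You claim that Lemma~\ref{lem:gen2} and the translation identity then force $\widetilde{\mathcal{M}}$ to be constant on a corresponding disk, a contradiction. But this does not follow: the decoupled sum $\sum_i \widetilde{\mathcal{M}}_i$ is \emph{not} locally constant just because one summand is --- the other terms $\widetilde{\mathcal{M}}_j$, $j\neq i_0$, still vary with $(\vphi,t)\in D$ and with $\tau_j$, so the sum (and hence $\widetilde{\mathcal{M}}$, being $C^0$-close to it) can be non-constant on every compact disk. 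The contradiction you seek does not arise, and the implication you assert is false in general. Your parenthetical fix --- discard indices $i_0$ with no genuine $\tau_{i_0}$-dependence --- is also incomplete as stated: the compact box argument in the reduced set of variables produces a critical point only with respect to the surviving $\tau_j$'s, and one must still argue that $\partial_{\tau_{i_0}}\widetilde{\mathcal{M}}=0$ is also satisfied at that point. (This is in fact true in that degenerate situation, since $\widetilde{\mathcal{M}}$ itself then has no $\tau_{i_0}$-dependence, but that observation is not in your argument and is a separate elementary fact about the definition of $\widetilde{\mathcal{M}}$, not a consequence of Lemma~\ref{lem:gen2}.)

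For what it is worth, the paper's own proof passes over the same point: it asserts for each $i$ the existence of the intervals $J^i_j$ with boundary-vs-supremum gap $2\upsilon$ --- which is the conclusion of Lemma~\ref{lem:gen1} --- without checking its hypothesis, so the gap is inherited rather than introduced by you. The rest of your argument --- taking $\upsilon=\min_i\upsilon_i$, spacing the intervals by $\Delta T(\upsilon/2)$, and observing that a $C^0$-small perturbation of a function with a strict interior maximum on a compact box still has an interior maximum, hence a critical point --- is correct and matches the paper's mechanism.
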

\begin{proof}
The case when all frequencies $\omega_j(I)$ are rational follows as in the proof of
Lemma \ref{lem:gen1}.

We now assume that at least one $\omega_j(I)$ is an irrational number. As before,  the linear flow
$\tau\in\mathbb{R}\to(\vphi-\tau\omega(I),t-\tau)\in\mathbb{T}^d\times\mathbb{T}$ fills up densely
some $d^\prime$-dimensional torus $\mathcal{T}_{I}$ in $\mathbb{T}^d\times\mathbb{T}$, with $d^\prime\leq d+1$.
Choose a $(d+1)$-dimensional ball $\mathcal{B}\subseteq \mathbb{T}^d\times\mathbb{T}$
with $\mathcal{B}\cap\mathcal{T}_I\neq\emptyset$ on which the map $(\tau,\vphi,\sigma)\to
\widetilde{\mathcal{M}}_i(\tau,I,\vphi,\sigma)$ is non-constant.

Since
$\widetilde{\mathcal{M}}$ is non-constant in $\tau$ over any compact disk in $\mathbb{T}^d\times\mathbb{T}$, there exist
$\upsilon_0\in\mathbb{R}$, $\upsilon>0$ and some open sets $\mathcal{U}^-$,
$\mathcal{U}^+$ in the space of coordinates $(\vphi,\sigma)$ such
that $\widetilde{\mathcal{M}}(\mathcal{U}^-)\subseteq (-\infty, \upsilon_0-\upsilon)$,
and $\widetilde{\mathcal{M}}(\mathcal{U}^+)\subseteq (\upsilon_0+\upsilon, +\infty)$.
Choose and fix $(\vphi, \sigma)$.

For each $i$, there exists a sequence of intervals $J^i_j$, moving
from $-\infty$ to $+\infty$, such that
\[\widetilde{\mathcal{M}}_i(\tau_i,I,\vphi,t)|_{\tau_i\in\partial
J_j^i}<\sup\{\widetilde{\mathcal{M}}_i(\tau_i,I,\vphi,t)\,|\,\tau_i\in J_j^i
\}-2\upsilon.
\]
Let $\Delta T$ be a number as in Lemma \ref{lem:gen2}
corresponding to the value of $\upsilon/2$. From the above
sequence
$\{J^i_j\}_{{i=1,\ldots,n}\atop{j\in\mathbb{Z}\quad\,\,\,\,\,}}$
we can select a collection of intervals
\[J^1_{j_1}, \ldots,J^n_{j_n}\] that are $\Delta T$ apart one from
the other. On the one hand we have  that
\begin{eqnarray*}  &\sum_{i=1}^n&\widetilde{\mathcal{M}}_i(\tau_i,I,\vphi,\sigma)|_{\tau\in\partial(
J_{j_1}^1\times\ldots\times
J^n_{j_n})}\\&&<\sup\{\sum_{i=1}^n  \widetilde{\mathcal{M}}_i(\tau_i,I,\vphi,\sigma)\,|\,\tau\in
J_{j_1}^1\times\ldots\times J^n_{j_n} \}-2\upsilon.
\end{eqnarray*}
On the other hand, by Lemma \ref{lem:gen2}, we have \[
\left| \widetilde{\mathcal{M}}(\tau_1,\ldots, \tau_n,I,\vphi,t)-
\left(\widetilde{\mathcal{M}}_1(\tau_1,I,\vphi,t)+\ldots
+\widetilde{\mathcal{M}}_n(\tau_n,I,\vphi,t\right))\right|<\upsilon/2,
\] for any $\tau\in J_{j_1}^1\times\ldots\times J^n_{j_n}$.
It results that
\[\widetilde{\mathcal{M}}(\tau,I,\vphi,t)|_{\tau\in\partial(
J_{j_1}^1\times\ldots\times
J^n_{j_n})}<\sup\{\widetilde{\mathcal{M}}(\tau,I,\vphi,t)\,|\,\tau\in
J_{j_1}^1\times\ldots\times J^n_{j_n} \}-\upsilon.
p\]
We conclude that there exists a critical point for
$\tau\to\widetilde{\mathcal{M}}(\tau,I,\vphi,\sigma)$ in each rectangle
$J_{j_1}^1\times\ldots\times J^n_{j_n}$ with the property that the
intervals $J^1_{j_1}, \ldots,J^n_{j_n}$  are $\Delta T$ apart one
from the other.  Each such critical point yields its
$\tau$-coordinate as an implicit function of $(I,\vphi, t)$.

Since there are infinitely many such rectangles, there are
infinitely many critical points (see Remark \ref{rem:trasninter}).
\end{proof}

\begin{rem}
Note that, since we are only arguing the existence of
local maxima is enough to perform $C^0$ estimates and
argue that the value in the boundary of a set is
smaller than the value on a point in the interior.
If we had used $C^1$ estimates, we could have shown
the existence of many other critical points.
It seems that characterizing better the multiplicity
of intersections is an interesting project, but we will
not consider it here.
\end{rem}

\subsubsection{Genericity of condition \textbf{H3} and \textbf{H4}}
\label{sec:genericity}

It is easy to see that the conditions \textbf{H3} and \textbf{H4}
are $C^{k+1}$-open, since a non-degenerate critical point for
$\tau\to\tilde{\mathcal{M}}(\tau,I,\vphi,t)$ remains non-degenerate after a
small enough $C^2$-perturbation, and if the function
$(I,\vphi,t)\to
\frac{\partial{\tilde{\mathcal{M}}}}{\partial\vphi}(\tau^*(I,\vphi,t),
I,\vphi,t)$ is non-constant over a disk in $\mathbb{T}^d\times\mathbb{T}$, then it remains non-constant after a small enough
$C^{k+1}$-perturbation.

By Lemma \ref{lem:gen3} there exists at least a critical point for
$\tau\to\tilde{\mathcal{M}}(\tau,I,\vphi,t)$ provided $\tilde{\mathcal{M}}$ is
non-constant over any disk in $\mathbb{T}^d\times\mathbb{T}$, for each $I$ fixed. This latter condition is
$C^\infty$-dense. Since a critical point as above can be made
non-degenerate by an arbitrarily  $C^\infty$-small  perturbation, it
results that the condition \textbf{H3} is $C^\infty$-dense. The
condition \textbf{H4} is also $C^\infty$-dense since $(I,\vphi,t)\to
\frac{\partial{\tilde{\mathcal{M}}}}{\partial\vphi}(\tau^*(I,\vphi,t),
I,\vphi,t)$  can be made non-constant over a disk in $\mathbb{T}^d\times\mathbb{T}$, if necessary, through an
arbitrarily small $C^\infty$-perturbation.
\qed


\section*{Acknowledgements}
We are grateful to Profs. T. M.-Seara, A. Delshams, A. Haro, Dr. M. Canadell
and Ms. J. Yang
for discussions and comments.

\appendix

\section{A  brief summary of normal hyperbolicity theory}
\label{sec:hyperbolicity}

In this section we recall standard definitions and results on the
theory of normal hyperbolicity
\cite{HirschPS77,Fenichel74,Fenichel77,Pesin04,BatesLZ08},
and on the scattering map
\cite{DelshamsLS08a}. Most of the material in this section
is standard and is included here to set the notation of the rest of
the paper and to check some of the constructions.

\subsection{Normally hyperbolic invariant manifolds}
Consider the general situation of a $C^r$-differentiable flow $\phi: M\times \mathbb{R}\to M$, $r\geq r_0$, defined on a  manifold $M$. Here $r_0$ is assumed to be suitably large.
It is important to realize that the theory does
 not assume that the manifolds are
compact. Nevertheless, we will assume that $C^r$ means that the
derivatives of order up to $r$ are continuous and uniformly bounded.
This remark already appears in \cite{HirschPS77}, and is crucial
for the infinite dimensional applications \cite{BatesLZ08}.

A submanifold   (with or without boundary) $\Lambda$ of $M$ is said
 to be a normally hyperbolic invariant manifold for $\phi^t$ if  it is invariant under $\phi^t$, and there exists a splitting of the tangent bundle
 of $TM$ into sub-bundles over $\Lambda$.
\[
T_p M=E^{\un}_p \oplus E^{\st}_p \oplus T_p \Lambda, \quad \forall p \in \Lambda.
\]
that are invariant under $D\phi^t$ for all $t\in\mathbb{R}$, and there exist  rates \[\tilde\lambda_-\le \tilde\lambda_+<\tilde\lambda_c<0<\tilde\mu_c<\tilde\mu_-\le \tilde\mu_+\]
and a constant $\tilde{C}>0$, such that for all $x\in\Lambda$ we have
\[
\begin{split} \tilde{C}e^{t\tilde\lambda_- }\|v\| \leq \|D\phi^t(x)(v)\|\leq  \tilde{C}e^{t\tilde\lambda_+ }\|v\|  \textrm{ for all } t\geq 0, &\textrm{ if and only if } v\in E^{\st}_x,\\
\tilde{C}e^{t\tilde\mu_+ }\|v\|\leq \|D\phi^t(x)(v)\|\leq  \tilde{C}e^{t\tilde\mu_- }\|v\|  \textrm{ for all } t\leq 0,  &\textrm{ if and only if }v\in E^{\un}_x,\\
\tilde{C}e^{|t|\tilde\lambda_c }\|v\|\leq  \|D\phi^t(x)(v)\|\leq  \tilde{C}e^{|t|\tilde\mu_c}\|v\| \textrm{ for all } t\in\mathbb{R}, &\textrm{ if and only if }v\in T_x\Lambda.
\end{split}
\]

The manifold $\Lambda$ is $C^{\ell}$-differentiable for
 some $\ell\leq r-1$ that depends on the rates
$\tilde\lambda^-$, $\tilde\lambda^+$, $\tilde\mu^-$, $\tilde\mu^+$,  $\tilde\lambda_c$, and $\tilde\mu_c$.
\begin{equation}\label{eqn:ratesdifferentiable}
\begin{split}
& \ell \mu_c  +  \lambda_+ < 0 \\
& \ell \lambda_c +  \mu_+ > 0
\end{split}
\end{equation}
We refer to \cite{Fenichel74} for the result and
for examples.

For $\Lambda$ normally hyperbolic,
there exist stable and unstable manifolds of $\Lambda$,
 denoted $W^{\st}(\Lambda)$ and $W^{\un}(\Lambda)$, respectively,
and are $C^{\ell-1}$-differentiable.
 They are foliated by stable and unstable manifolds of points,
$W^{\st}(x)$, $W^{\un}(x)$, respectively, which are as smooth and the flow.
These manifolds are defined by:

\begin{equation}\label{stablemanif-flow}
\begin{split}
 W^\st(\Lambda)
 =& \{ y \,|\, d(  {\phi}^t_\eps(y), \Lambda ) \rightarrow 0 \textrm{ as {$t \to +\infty$} }\} \\
 =& \{ y \,|\,   d(  {\phi}^t_\eps(y), \Lambda ) \le \tilde C_y e^{(\lambda_c -\delta) t} ,\delta > 0,  t \ge 0 \} \\
 =& \{ y \,|\, d(  {\phi}^t_\eps(y), \Lambda ) \le \tilde C_y e^{\lambda_+ t} ,t \ge 0\} \\
 W^\un(\Lambda)
 =& \{ y \,|\, d(  {\phi}^t_\eps(y), \Lambda ) \rightarrow 0 \textrm{ as {$t \to -\infty$} }\} \\
 =& \{ y \,|\,   d(  {\phi}^t_\eps(y), \Lambda ) \le \tilde C_y e^{(\mu_c + \delta) t} ,\delta > 0,  t \le 0 \} \\
 =& \{ y \,|\, d(  {\phi}^t_\eps(y), \Lambda ) \le \tilde C_y e^{\mu_+ t} ,t \ge 0\} \\
W^{\st}(x)
=& \{y | d( \phi^t (y),\phi^t (x) ) < \tilde C_y  e^{\lambda_+ t},\, t\geq 0  \} \\
=& \{y | d( \phi^t (y),\phi^t (x) ) < \tilde C_y  e^{\lambda_c -\delta) t},\,
\delta > 0, t\geq 0  \} \\
W^{\un}(x)
=& \{y | d( \phi^t (y), \phi^t (x) ) < \tilde C_y  e^{(\mu_c +\delta) t},\, \delta > 0,  t\leq0  \}.\\
=& \{y | d( \phi^t (y), \phi^t (x) ) < \tilde C_y  e^{\mu_- t},\, t\leq0  \}.
\end{split}
\end{equation}

Notice that the first characterization of $W^\st(\Lambda),W^\un(\Lambda)$
are topological in nature, but they are equivalent to
rate conditions. Indeed, the equivalence of the two
characterizations shows that there is a bootstrap of the rates.
For the manifolds $W^\st(x), W^\un(x)$, there is no topological
characterization.  There could be points in $\Lambda$ whose
orbits approach in the future  the orbit of $x$, albeit with a slower
rate than that $\lambda_+$ which characterizes the orbits of $W^\st(x)$.

In the theory of normally hyperbolic
manifolds the stable
and unstable manifolds of points are sets of
orbits which converge with a fixed exponential rate
in the future or in the past.
This should not be confused with the usage of the same name in
other areas of dynamical systems where stable and unstable manifolds
refer to points that converge (at any rate).

It is well known from normal-hyperbolicity theory \cite{Fenichel74}
that the $W^\st(x)$ provide a foliation of $W^\st(\Lambda)$.
\begin{equation}\label{decomposition}
W^{\st}(\Lambda) = \bigcup_{x \in \Lambda } W^{\st}(x),
\quad
W^{\un}(\Lambda) = \bigcup_{x \in \Lambda} W^{\un}(x),
\end{equation}
and that $W^{\st}(x) \cap W^{\st}( x')= \emptyset$,
$W^{\un}(x) \cap W^{\un}(x') = \emptyset$
if $x,x' \in \Lambda$, $x \neq x'$.

In dynamical  terms, the above remark says that
the points whose orbits converge to $\Lambda$, have orbits
that track the orbit of a map in $\Lambda$ and that the
tracking approaches with a exponential rate $\lambda_+$.   That is, we can
for all the points that converge to $\Lambda$ we can associate
a unique representative of the long term behavior.

The manifolds $W^{\st}(x)$, $W^{\un}(x)$ are
$C^r$ if the flow $\phi^t$ is $C^r$ (here, we can allow
$k = \infty, \omega$) and we have
\[
T_x W^{\st}(x) = E^{\st}_x,
\quad
T_x W^{\un}(x) = E^{\un}_x.
\]
The reason why $W^\st(\Lambda)$ may fail to be $C^r$ is
because the leaves $W^\st(x)$ -- which are $C^r$ -- may
depend  less regularly in the point $x$.

In the application of this paper, since $\lambda_c, \mu_c$ are
close to zero for $\eps$ small, we see that we can take the
regularity to be any finite number, as large as needed.


\subsection{Local stable and unstable manifolds} If we only select the set of points $y\in W^\st(\Lambda)$
with $ d({\phi}^t(y), \Lambda) \le \delta$ for all
$t\geq 0$, for some  fixed $0 < \delta \ll 1$, we obtain the
local stable manifold $W^{\st}_{\text{loc}}(\Lambda)$, which
is  a $C^k$-embedded manifold. Clearly, $W^\st(\Lambda) =
\bigcup_{t \geq 0} {\phi}^{-t} (W^{\st}_{\text{loc}}(\Lambda))$.
An analogous definition, holds, of
course, for the local unstable manifold.

Local stable and unstable manifolds are defined in a similar fashion in the  map case.

\section{The extension argument for the flows in the  models
 \eqref{generalperturbation} and \eqref{eqn:hamiltonian_eps}}
\label{sec:extension}

\subsection{Extensions of the vector field and of Hamiltonian}
In general, the theory of \cite{Fenichel71} requires to extend the perturbed
flow to a whole space. The locally invariant manifolds for the original problem
are globally invariant for the flow in the full space.

In our case, the construction of the extension is very simple.

We note that the extension of the penduli is not a big issue
since all the effects we study happen in a small perturbation
of the homoclinic intersection of the penduli. On the other
hand, the extension in the $I$ variables is crucial if
there are orbits of the perturbation when the extension changes.

Because the boundaries of the domains $\OO_1, \OO_2$ are
smooth, we can construct $C^\infty$ functions $\beta_1, \beta_2$
such that
$\beta_1:  \real^n \times \real^n \rightarrow \real$,  $\beta_1|_{\OO_1} \equiv 1$
and $\beta_1(p,q) = 0$ if $\dist( (p,q), \OO_1) > 0.1$, and
$\beta_2: \real^d \rightarrow \real$,  $\beta_2 |_{\OO_2} \equiv 1$
and $\beta_2(I) = 0$ if $\dist( I, \OO_2) > 0.1$.

In the case of Hamiltonian perturbations, we cut-off
the Hamiltonian and define the extended Hamiltonian
\begin{equation*}
\begin{split}
\tilde h_\eps(p,q, I, \vphi, t) =
&h_\eps(p,q,I, \vphi, t)(\beta_1(p,q)\beta_2(I) )\\
&+ (1 - \beta_1(p,q) \beta_2(I) )\left[ \sum_i  \pm P_i(p_i, q_i) +
\eps \omega_0 \cdot I\right].
\end{split}
\end{equation*}

Note that the perturbed hamiltonian agrees with the original one in $\D$.
What happens outside was chosen rather arbitrarily to be rather simple.
The only constraint is that the manifold $\Lambda_0$ is a NHIM for the extension.
Note that the extension depends differentiably on parameters.

In the case of general perturbations, we can consider the
extended flow to be
\begin{equation*}
\begin{split}
\tilde{\X}_\eps(p,q, I, \vphi, t) =
&\X_\eps(p,q,I, \vphi, t)(\beta_1(p,q)\beta_2(I) )\\
&+ (1 - \beta_1(p,q) \beta_2(I) )\left[ \Y^0(p,q) + \eps \Y^1(p,q,I, \vphi, t))\right],
\end{split}
\end{equation*}
where $\Y^0$, $\Y^1$ are selected to be e.g. the Hamiltonian flows corresponding to
the previous extension.

After we have an NHIM which is uniformly differentiable in a uniformly differentiable field, we
can invoke the results in the literature to obtain the persistence. As mentioned before, we
can obtain the differentiability with respect to parameters by supplementing the
evolution equations with $\dot \eps = 0$.


\subsection{Regularity with respect to parameters}

The regularity of all the objects discussed above with respect to
parameters can be easily obtained  by applying the standard argument
of considering a system extended to the phase space and the
parameter space (see \cite{Fenichel71, HirschPS77, Pesin04} for more
details on this standard argument).
The dynamics on this extended  phase space is just the
product of the dynamics we are considering and the identity dynamics
on the parameters. It is easy to see that given a normally
hyperbolic manifold, its product with the parameter space is
normally hyperbolic for the extended dynamics. Hence applying the
result of persistence for the extended system, we obtain immediately
persistence with smooth dependence on parameters in the original
system. One can also observe that the (un)stable manifolds for the
extended system are just the product of the (un)stable manifolds for
the real system and the space of parameters. Hence, the regularity
of the stable manifolds for the extended system gives the regularity
of the stable and unstable manifolds with respect to parameters of
the original system. Notice that, when considering locally invariant manifolds,
one applies this argument to a family of extended systems. If one
considers a smooth family of extended flows, one gets a smooth family of
of normally hyperbolic invariant manifolds and their stable manifolds,
which of course are locally invariant for the unextended system.

In the cases considered here, when the motion on the manifold of the unperturbed
system is integrable, the regularity can be as large as desired for small enough
$\eps$. In more general systems, the regularity is limited by an expression depending
on the ratios.

\section{The scattering map for a normally hyperbolic manifold}
\label{sec:scatteringnhim}
In this section, we cover some of
the results on the theory of the scattering map.
This allows to give an independent proof of
some of the results in the main text.

\begin{rem}
The scattering map for normally hyperbolic manifolds was
introduced in \cite{DelshamsLS00} and the theory was
developed in \cite{DelshamsLS08a}. The notation is taken from
the scattering theory in quantum mechanics.  The intuition is
that the motion in the NHIM is analogue
to the free dynamics and that the homoclinic excursions are
analogues of the coupled dynamics. The notation is
taken to resemble this. With this analogy, the
fact that the scattering map is given in first order by
the Melnikov function amounts to Fermi Golden Rule.
\end{rem}

\subsection{Wave maps}
\label{sec:wavemaps}
 One of the consequences of \eqref{decomposition} is that we can define $\Omega^\st:W^{\st}(\Lambda)\to\Lambda$
by  $x \in W^{\st}(\Omega^\st(x))$. When it cannot lead to confusion, we will often use
the abbreviation $x^+ = \Omega^\st(x)$.
Analogously, we define $\Omega^\un: W^\un(\Lambda)\to \Lambda$ by $x \in W^{u}(\Omega^\un(x))$,
and will use the abbreviation
$x^- = \Omega^\un(x)$. We will refer to  $\Omega^{\st,\un}$ as wave maps.
The meaning of $\Omega^{\st,\un}$ is that they give
points in $\Lambda$ whose orbit is exponentially
close to the orbit of $x$ as $t \to \pm \infty$.

More precisely, the  point
$\Omega^\st(x) \in \Lambda$ is characterized by
\begin{equation}\label{convergences}
d( {\phi}^t(x), {\phi}^t(\Omega^\st (x)) ) \le
\tilde C_x \tilde\lambda_+^t, \quad \textrm { for all } t \ge
0.
\end{equation}
Similarly, if $x \in W^\un(\tLambda)$, there is  a unique
point $  {\Omega}^\un(x) \in \tLambda$ such that
\begin{equation}\label{convergenceu}
d(  {\phi}^t(x), {\phi}^t(\tilde\Omega^\un (x)) ) \le
\tilde C_x \tilde\mu_-^{-t}, \quad \textrm { for all } t \leq 0.
\end{equation}

\subsection{Scattering map}
\label{sec:scatteringmap}

We summarize here the main results on the scattering map, following \cite{DelshamsLS00,Garcia00,DelshamsLS08a}. (See also \cite{ShatahZeng03}, where the scattering map, referred there to as the `return map',  is applied to Hamiltonian PDEs.)

We will present in this appendix the results for time independent perturbations.
The results for time-dependent perturbations can be obtained straightforwardly by
making the system autonomous by adding extra variables.  Note that if we want to maintain the
symplectic properties, we need to add two variables one of them standing for the time and another
one symplectically conjugate to it. See \cite{DelshamsLS08a} for more details.

Assume that $W^\un(\Lambda)$, $W^\st(\Lambda)$ have a   transverse intersection along a manifold $\Gamma$, which   is also transverse to the foliation of the unstable manifold
by unstable manifolds of  points.
That is, we will assume it is
transverse to the foliation of the unstable manifold of $\Lambda$ whose leaves are $\{W^{\un}(x)\}_{x\in\Lambda}$.

That is, we will assume that  for each $x\in \Gamma$, we  have
both:
\begin{equation}\label{goodtransversal}
\begin{split}
&T_x\Gamma = T_xW^\st(\Lambda)\cap
T_xW^\un(\Lambda),\\
&T_x W^\un(\Lambda) = T_x \Gamma \oplus  T_xW^\un(x^-),\\
\end{split}
\end{equation}
where $x^-=\Omega^\un(x)$.
The meaning of the second condition will become
apparent later. It is
that the manifold $\Gamma \subset W^\un(\Lambda)$
is transversal to the foliation of $W^\un(\Lambda)$
given by the stable manifolds of
points.  The second part in \eqref{goodtranversal} shows
that locally we can use the Implicit function theorem
to define an  inverse for $\Omega^-$ from a
neighborhood to a neighborhood of $\Gamma$.

We note that the second part of
\eqref{goodtransversal} is implied
by the more symmetric condition:
\begin{equation}\label{goodtransversal2}
T_xM=T_x\Gamma  \oplus
T_xW^\un(x^-)\oplus  T_xW^\st(x^+),
\end{equation}
where $x^+=\Omega^\st(x)$.
Indeed, in the applications, we will verify
\eqref{goodtransversal2}.

Then we can define a map
\[S : U^- \subseteq \Lambda \to U^+ \subseteq
\Lambda  \] associated to $\Gamma $, which we call the
scattering map,  given by \[S(x^-) = x^+,\] provided that
$W^\un(x^-)$ intersects $W^\st(x^+)$ at a (unique) point
$x\in\Gamma$.

The domain $U^-$ of the scattering map $S$ is in general an
open subset of $\Lambda$
where $\Omega^\un$ is invertible from
$\Gamma$ to $U^-$.
Note that, by the implicit function theorem,
if there is a point satisfying
\eqref{goodtransversal}, we can
find a small enough neighborhood $U^-$
where indeed $\Omega^\un$ is invertible.
In this case, we can write the scattering map
as
\[
S = \Omega^\st \circ (\Omega^{\un})^{-1}.
\]

Note that the scattering map depends very much on the
homoclinic intersection $\Gamma$ considered, even if we will omit
it from the notation when it cannot lead to confusion.

If we consider a smooth family of
flows that depend, smoothly on parameters,
 due to the smooth dependence of the stable/unstable
manifolds on parameters and the transversality conditions
\eqref{goodtransversal}
assumed in the definition  of the scattering map, we
obtain the smooth dependence on parameters of the scattering map
of an intersection that depends smoothly on parameters.

Note that, the theory of the scattering map
presented so far applies to any differentiable enough
dynamical system.
When we consider families of Hamiltonian flows,
there are more properties.

\begin{thm}[\cite{DelshamsLS08a}]
\label{scatteringexact}
Assume that the flow $\phi^t$ is Hamiltonian
and that the manifold $\Lambda$ is symplectic
for the restriction. Then, the scattering map is
symplectic; if the flow is exact Hamiltonian,
the symplectic map is exact symplectic.
\end{thm}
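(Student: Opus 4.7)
\medskip

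\noindent\textbf{Proof plan for Theorem~\ref{scatteringexact}.}
My plan is to reduce the symplecticity of $S = \Omega^\st \circ (\Omega^\un)^{-1}$ to two structural facts about the stable/unstable foliations of $\Lambda$: (a) each leaf $W^\st(x)$ and $W^\un(x)$ is isotropic, and (b) the submanifolds $W^\st(\Lambda)$ and $W^\un(\Lambda)$ are coisotropic, with characteristic foliations exactly given by those leaves. Granting (a) and (b), the wave maps $\Omega^\st,\Omega^\un$ become the symplectic reductions modulo the characteristic foliations, so each pulls the symplectic form on $\Lambda$ back to the restriction of the ambient $\omega$ on the appropriate coisotropic manifold. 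The transversality hypotheses \eqref{goodtransversal}--\eqref{goodtransversal2} guarantee that $(\Omega^\un)^{-1}$ makes sense on $U^-$ and that the composition $\Omega^\st \circ (\Omega^\un)^{-1}$ is a diffeomorphism onto $U^+$, and the two reduction identities combine to give $S^*(\omega|_\Lambda) = \omega|_\Lambda$.

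To establish (a), I would use that for $v,w \in T_y W^\st(x)$ the flow-invariance of $\omega$ gives $\omega_y(v,w) = \omega_{\phi^t(y)}(D\phi^t v, D\phi^t w)$ for all $t\ge 0$; since $v,w$ lie in the stable bundle, $D\phi^t v$ and $D\phi^t w$ decay exponentially (rate $\lambda_+$), while $\omega$ is bounded, so letting $t\to +\infty$ forces $\omega_y(v,w)=0$. The analogous argument at $t\to -\infty$ handles $W^\un(x)$. For (b), a dimension count using $\dim \Lambda = 2d$ and the splitting $T_xM = T_x\Lambda \oplus E^\st_x \oplus E^\un_x$ shows $\dim W^\st(\Lambda) = 2d + \dim E^\st$ and that the $\omega$-orthogonal complement of $T_yW^\st(\Lambda)$ inside itself is precisely $T_y W^\st(\Omega^\st(y))$; this follows by combining (a) with the fact that $\omega$ restricted to $\Lambda$ is nondegenerate and that vectors tangent to the stable leaves pair trivially with all of $T_yW^\st(\Lambda)$.

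For the exact symplectic case, where $\omega = d\alpha$, I would produce an explicit primitive function $\Delta: U^- \to \mathbb{R}$ such that $S^*\alpha - \alpha = d\Delta$ by integrating $\alpha$ along the full homoclinic orbit and subtracting the corresponding integrals along the two asymptotic orbits in $\Lambda$:
\begin{equation*}
\Delta(x^-) = \int_{-\infty}^{0}\!\bigl[\alpha_{\phi^\sigma(x)}(\X^0) - \alpha_{\phi^\sigma(x^-)}(\X^0)\bigr]\,d\sigma + \int_{0}^{\infty}\!\bigl[\alpha_{\phi^\sigma(x)}(\X^0) - \alpha_{\phi^\sigma(x^+)}(\X^0)\bigr]\,d\sigma,
\end{equation*}
where $x\in\Gamma$ is the homoclinic point with $\Omega^\un(x)=x^-$, $\Omega^\st(x)=x^+$. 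Exponential convergence of the wave maps makes the integrand decay exponentially, so both improper integrals converge absolutely together with their derivatives. Differentiating under the integral and using $\mathcal{L}_{\X^0}\alpha = di_{\X^0}\alpha + i_{\X^0}d\alpha = d(\alpha(\X^0)) + i_{\X^0}\omega$ together with the isotropy of stable/unstable leaves gives the desired primitive relation.

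The main obstacle I expect is the careful verification of the characteristic-foliation statement in (b), because it requires matching the \emph{topologically defined} stable leaves with the algebraically defined $\omega$-kernel of $\omega|_{W^\st(\Lambda)}$; this in turn depends on the pairing rule $\lambda_+ = -\mu_+$, which holds in the Hamiltonian setting (as already remarked in the paper) and which is exactly what is needed for $E^\st$ and $E^\un$ to be $\omega$-paired dually with each other while being individually isotropic. Once this pairing is in place, the reduction identities and the convergence of $\Delta$ follow, completing the proof.
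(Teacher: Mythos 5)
The paper does not prove Theorem~\ref{scatteringexact}: it is stated with attribution to \cite{DelshamsLS08a} and used as a black box in Appendix~\ref{sec:scatteringnhim}. Your proposal reconstructs essentially the argument in that reference --- isotropy of the leaves $W^{\st,\un}(x)$ by the growth-rate argument, coisotropy of $W^{\st,\un}(\Lambda)$ with characteristic foliation given by the leaves, wave maps $\Omega^{\st,\un}$ as symplectic reductions, and an explicit primitive $\Delta$ (a regularized action along the homoclinic orbit) for the exact symplectic case --- so in outline the plan is sound and matches the cited source.

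One point, however, is misattributed and would actually leave a gap if taken literally. You say the coisotropy of $W^\st(\Lambda)$ (point (b)) ``depends on the pairing rule $\lambda_+ = -\mu_+$,'' but the pairing rule only controls the $\omega$-duality between $E^\st$ and $E^\un$; it plays no role in showing that a leaf direction pairs trivially with the \emph{center} directions in $T_y W^\st(\Lambda)$, which is the new content of coisotropy beyond the isotropy already established in (a). The correct mechanism is the same decay argument you use in (a), now applied to mixed pairs: for $v$ tangent to the leaf ($\lVert D\phi^t v\rVert \lesssim e^{\lambda_+ t}$) and $w$ transverse to the leaf inside $T_y W^\st(\Lambda)$ ($\lVert D\phi^t w\rVert \lesssim e^{\mu_c t}$), flow-invariance of $\Upsilon$ gives $|\Upsilon_y(v,w)| \lesssim e^{(\lambda_+ + \mu_c)t} \to 0$, which requires the \emph{bunching inequality} $\lambda_+ + \mu_c < 0$ from \eqref{eqn:ratesdifferentiable} (satisfied here with $\ell \ge 1$), not the rate symmetry $\lambda_+ = -\mu_+$. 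Once this is fixed, the dimension count identifies $\ker(\Upsilon|_{T_y W^\st(\Lambda)})$ with the leaf tangent $T_y W^\st(\Omega^\st(y))$, and the rest of your reduction argument goes through. For the exact case, your candidate primitive is of the right form, but the verification that $S^*(\alpha|_\Lambda) - \alpha|_\Lambda = d\Delta$ by differentiating under the integral and applying Cartan's formula needs to be written out; in particular you must account for the $\tilde{x}$-dependence of the endpoints $x^{\pm} = \Omega^{\st,\un}(x)$, which is exactly where the isotropy of the leaves and the reduction identities are invoked to kill the boundary terms. That step is the one most likely to hide errors if done casually.
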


One reason why Theorem~\ref{scatteringexact} is useful
is that, as it well known in symplectic
geometry, to determine a family of
symplectic mappings (often called a
symplectic deformation),  it suffices to specify a
family of functions (the Hamiltonian of the deformation).
We note that a family of functions is much easier
than a family of diffeomorphisms. Of course,
it has less components, but, more importantly,
functions transform in a much simpler way
(there are no Jacobians of the change of variable
that come into play).

More precisely:
\begin{prop} \label{deformation}
Let  $s_\eps$ be a differentiable family of diffeomorphisms.
It is exact symplectic if and only if:
\begin{itemize}
\item
$s_0$ is exact symplectic.
\item
Denoting by $\frac{d}{d \eps} s_\eps = \mathcal{S}_\eps \circ s_\eps$,
the vector field $\mathcal{S}_\eps$  (called the generator of
the deformation) satisfies
that there is a function (called the Hamiltonian of the deformation)
such that
\begin{equation}\label{hamiltoneq}
\imath( \mathcal{S}_\eps) \Upsilon = d S_\eps,
\end{equation}
\end{itemize}
where $\Upsilon$ is the symplectic form and $\imath$
is the standard contraction of a vector field and a form.
\end{prop}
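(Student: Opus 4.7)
The plan is to prove the two directions by differentiating the pullback $s_\eps^*\alpha$ with respect to $\eps$ and applying Cartan's magic formula. Recall that, with $\Upsilon = d\alpha$, a diffeomorphism $s$ is exact symplectic precisely when $s^*\alpha - \alpha$ is an exact one-form; this will be the working definition used throughout. The general identity $\frac{d}{d\eps}\,s_\eps^*\alpha = s_\eps^*\mathcal{L}_{\mathcal{S}_\eps}\alpha$ (which is the standard formula for the Lie derivative along the generator of a family) is our starting point. Expanding with Cartan's formula and using $d\alpha = \Upsilon$ gives the key identity
\begin{equation*}
\frac{d}{d\eps}\,s_\eps^*\alpha \;=\; s_\eps^*\bigl(\imath(\mathcal{S}_\eps)\Upsilon\bigr) \;+\; d\bigl(\alpha(\mathcal{S}_\eps)\circ s_\eps\bigr).
\end{equation*}

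For the forward implication (Hamiltonian generator implies exact symplectic), suppose $\imath(\mathcal{S}_\eps)\Upsilon = dS_\eps$. Then $s_\eps^*\bigl(\imath(\mathcal{S}_\eps)\Upsilon\bigr) = d(S_\eps \circ s_\eps)$, so the right-hand side of the above display is the exterior derivative of $S_\eps\circ s_\eps + \alpha(\mathcal{S}_\eps)\circ s_\eps$. Integrating from $0$ to $\eps$ yields
\begin{equation*}
s_\eps^*\alpha - s_0^*\alpha \;=\; d\!\int_0^\eps \bigl(S_\sigma + \alpha(\mathcal{S}_\sigma)\bigr)\circ s_\sigma\, d\sigma,
\end{equation*}
and, since $s_0$ is assumed exact symplectic so that $s_0^*\alpha - \alpha$ is exact, $s_\eps^*\alpha - \alpha$ is exact for every $\eps$.

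For the converse, assume $s_\eps^*\alpha - \alpha = dF_\eps$ for a smooth family $F_\eps$; in particular $s_0$ is exact symplectic. Differentiating in $\eps$ gives $\frac{d}{d\eps}s_\eps^*\alpha = d\bigl(\tfrac{dF_\eps}{d\eps}\bigr)$, and equating this with the Cartan identity above shows that $s_\eps^*\bigl(\imath(\mathcal{S}_\eps)\Upsilon\bigr)$ is exact. Pulling back by the diffeomorphism $s_\eps^{-1}$ preserves exactness, so $\imath(\mathcal{S}_\eps)\Upsilon$ is itself exact; concretely one may take $S_\eps = \bigl(\tfrac{dF_\eps}{d\eps}\bigr)\circ s_\eps^{-1} - \alpha(\mathcal{S}_\eps)$, which gives $\imath(\mathcal{S}_\eps)\Upsilon = dS_\eps$ as required. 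The only subtle point, and the item most deserving of care, is bookkeeping the pullbacks correctly when moving the contraction $\imath(\mathcal{S}_\eps)\Upsilon$ in and out of $s_\eps^*$; otherwise the argument is essentially a clean application of Cartan's magic formula and the elementary fact that the class of exact forms is preserved under pullback by diffeomorphisms.
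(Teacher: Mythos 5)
Your proof is correct, and it uses exactly the ``calculus of deformations'' argument that the paper defers to its references (\cite{Moser65,LlaveMM86,BanyagaLW96}) rather than spelling out. The paper itself provides no proof, so your argument fills that gap. Concretely: the identity $\frac{d}{d\eps}s_\eps^*\alpha = s_\eps^*\mathcal{L}_{\mathcal{S}_\eps}\alpha$ is the right starting point given the convention $\frac{d}{d\eps}s_\eps = \mathcal{S}_\eps\circ s_\eps$, Cartan's formula applied to the primitive $\alpha$ of $\Upsilon$ gives the displayed decomposition, and the two directions follow by integrating in $\eps$ (forward) and by comparing exact one-forms and pulling back by $s_\eps^{-1}$ (converse). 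Your explicit primitive $S_\eps = \bigl(\tfrac{dF_\eps}{d\eps}\bigr)\circ s_\eps^{-1} - \alpha(\mathcal{S}_\eps)$ is indeed what falls out of the computation.

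One small point you gloss over in the converse: the definition of exact symplectic only gives, for each $\eps$, \emph{some} primitive $F_\eps$ with $s_\eps^*\alpha - \alpha = dF_\eps$, and you need the family $F_\eps$ to be differentiable in $\eps$ before you can differentiate it. This is easily repaired: on a connected manifold the primitive is unique up to an additive constant, so normalizing by $F_\eps(x_0)=0$ at a fixed base point (equivalently taking $F_\eps(x)=\int_{x_0}^x(s_\eps^*\alpha-\alpha)$ along any path) produces a family that is as smooth in $\eps$ as $s_\eps$ is. With that remark added, the argument is complete and, I would say, cleaner than an appeal to the literature.
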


The proof of Proposition~\ref{deformation} is
more or less contained in \cite{Moser65}.
See more discussions and the development of a calculus of
deformations in \cite{LlaveMM86, BanyagaLW96}.

To apply the theory of deformations to the scattering map, we
have  the annoying complication that  the scattering maps
are defined on different manifolds. One observation is that,
by the implicit function theorem,
we can find a unique smooth family of mappings

\[
k_\eps: \Lambda_0 \rightarrow
\Lambda_\eps
\]
\begin{equation}
\label{normalization}
\frac{d}{d \eps} k_\eps   \in \text{Span} (E^\st_\eps \oplus E^\un_\eps).
\end{equation}

For the purposes of this paper, which is only perturbation
theory,  the only case that we need the above two
statements for $\eps = 0$. Note that the second
part  Proposition~\ref{symplecticscattering}
only concerns the case $\eps = 0$.

Note that the mappings $k_\eps$ intertwine the dynamics of
the mapping $f_\eps$ with some inner dynamics on $\Lambda_0$.
That is:
\[
f_\eps \circ k_\eps = k_\eps \circ r_\eps
\]
for some smooth family of diffeomorphisms
 $r_\eps: \Lambda_0 \rightarrow \Lambda_0$.

The following result can be found in \cite{DelshamsLS08a}

\begin{prop}\label{symplecticscattering}
Under the assumptions \textbf{H1-H4}:

The mappings $k_\eps$ normalized as in
\eqref{normalization}
 are symplectic in the sense
that
\[
k_\eps \Upsilon|_{\Lambda_0} = \Upsilon|_{\Lambda_\eps}
\]

Then, the reduced scattering map
$\tilde s_\eps = s_\eps \circ k_\eps^{-1}  $ is
symplectic and, for $\eps = 0$, the generator is
the given by
\begin{equation}\label{firstorderscattering}\begin{split}
\int_{-\infty}^\infty \left[ H_1( \phi^t((\Omega^s_\eps)^{-1}(x))) - H_1( \phi^t(x))\right]  \, dt
\end{split}
\end{equation}
\end{prop}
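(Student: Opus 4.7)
The statement has two separate claims: (i) that the normalized identification $k_\eps : \Lambda_0 \to \Lambda_\eps$ is symplectic, and (ii) that the reduced scattering map $\tilde s_\eps$ is exact symplectic with its deformation generator at $\eps = 0$ described by the Melnikov-type integral displayed in the proposition. The plan is to establish (i) by a direct Cartan-calculus argument exploiting the normalization \eqref{normalization}, then use (i) together with Theorem~\ref{scatteringexact} to reduce (ii) to the computation of a Hamiltonian of deformation via Proposition~\ref{deformation}, which we carry out with the same fundamental-theorem-of-calculus machinery used in the proof of Theorem~\ref{prop:melnikov_jump}.

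\textbf{Step 1: $k_\eps$ is symplectic.} The key linear-algebra input is that, because the flow is Hamiltonian and the restriction of $\Upsilon$ to $\Lambda_\eps$ is symplectic, the symplectic orthogonal of $T\Lambda_\eps$ inside $TM|_{\Lambda_\eps}$ coincides with $E^\st_\eps \oplus E^\un_\eps$ (this is the standard Lagrangian-pairing statement for symplectic NHIMs, derivable from the fact that in the Hamiltonian setting the characteristic exponents come in pairs $\pm\lambda$). Let $K_\eps$ denote the vector field along $\Lambda_\eps$ defined by $\frac{d}{d\eps} k_\eps = K_\eps \circ k_\eps$; the normalization \eqref{normalization} gives $K_\eps \in E^\st_\eps \oplus E^\un_\eps$, hence $\Upsilon(K_\eps, v) = 0$ for every $v \in T\Lambda_\eps$. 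Combined with $d\Upsilon = 0$ and Cartan's magic formula,
\[
\tfrac{d}{d\eps} k_\eps^* \Upsilon \;=\; k_\eps^* L_{K_\eps} \Upsilon \;=\; k_\eps^* d\bigl(\iota_{K_\eps} \Upsilon\bigr),
\]
and when both sides are evaluated on a pair of tangent vectors of $\Lambda_0$ the right-hand side vanishes because $\iota_{K_\eps} \Upsilon$ is zero on $T\Lambda_\eps$. Since $k_0 = \Id$ gives $k_0^* \Upsilon|_{\Lambda_0} = \Upsilon|_{\Lambda_0}$, integrating in $\eps$ yields $k_\eps^*(\Upsilon|_{\Lambda_\eps}) = \Upsilon|_{\Lambda_0}$.

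\textbf{Step 2: the generator of $\tilde s_\eps$.} By Theorem~\ref{scatteringexact}, $s_\eps$ is exact symplectic on $\Lambda_\eps$; by Step~1, $k_\eps$ is symplectic, so the conjugate $\tilde s_\eps$ is an exact symplectic deformation of $s_0$ on the fixed manifold $\Lambda_0$. Proposition~\ref{deformation} then provides a Hamiltonian $S_\eps$ of the deformation, satisfying $\iota_{\mathcal{S}_\eps} \Upsilon|_{\Lambda_0} = dS_\eps$, where $\mathcal{S}_\eps$ is the infinitesimal generator of $\tilde s_\eps$. To identify $S_0$, I compute $\frac{d}{d\eps}|_{\eps=0}\tilde s_\eps(x)$ for $x \in \Lambda_0$ by the same procedure used in \eqref{firstorderscattering1}--\eqref{firstorderscattering2}: express the change in any function $f$ along the scattering map as the difference of two improper integrals of $X^1 f$ (equivalently $[f,h]$ in the Hamiltonian case) taken along the homoclinic orbit versus along the asymptotic orbits in $\Lambda_\eps$, and set $\eps = 0$ in the integrand using Gronwall on the window $|\sigma| \le c|\log|\eps||$. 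Applied to $f = H_1$ itself, this produces precisely the integral displayed in the statement as the value of $S_0$; then, applied to coordinate functions, it reproduces the Hamilton equations $\iota_{\mathcal{S}_0}\Upsilon = dS_0$, identifying $\mathcal{S}_0$ as the symplectic gradient of the displayed integral on $\Lambda_0$.

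\textbf{Main obstacle.} The delicate point throughout is the convergence of the improper integral together with its derivatives: for $t \to +\infty$ the homoclinic orbit $\phi^t((\Omega^\st_\eps)^{-1}(x))$ approaches the orbit of $x$ in $\Lambda_\eps$ exponentially, while for $t \to -\infty$ it approaches the orbit of $\tilde s_\eps^{-1}(x)$; after subtracting suitable asymptotic contributions the integrand decays as $e^{-\lambda_+ |t|}$. The absolute convergence and the regularity of the limit are handled exactly as in \eqref{interpolation} in Section~\ref{sec:Melnikov}, with the Faà di Bruno bound on derivatives of $\tilde\phi^t_\eps$ ensuring that the integral is $C^k$ jointly in $x$ and $\eps$ for $k$ as large as desired when $|\eps|$ is small. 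A secondary technical point is the verification that, in our specific model, the symplectic orthogonal of $T\Lambda_\eps$ really is $E^\st_\eps \oplus E^\un_\eps$; this follows because at $\eps=0$ the bundles are spanned by the eigenvectors of the pendulum linearizations (which are symplectically dual to each other and annihilate the $(I,\vphi,t)$ directions), and the property is open under small perturbation.
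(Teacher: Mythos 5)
The paper does not prove Proposition~\ref{symplecticscattering} at all: immediately after the statement it writes ``We refer to \cite{DelshamsLS08a} for a proof \dots including the fact that the leading term of the deformation is the Melnikov integral,'' and offers only a heuristic argument that it explicitly says it does ``not know how to turn \dots into a proof.'' So your attempt at a self-contained argument is a genuinely different route rather than a reconstruction, and the symplecticity of $k_\eps$ in your Step~1 is correctly argued: that $E^\st_\eps\oplus E^\un_\eps$ is the symplectic orthogonal of $T\Lambda_\eps$ does follow by propagating $\Upsilon(D\tilde\phi^\sigma_\eps v, D\tilde\phi^\sigma_\eps w)$ with the rate inequalities \eqref{characterization} (one needs $\tilde\mu_c+\tilde\lambda_+<0$ and $\tilde\lambda_c+\tilde\mu_->0$, which hold since the center rates are close to zero), and then the chain $\tfrac{d}{d\eps}k_\eps^*\Upsilon = d\left(k_\eps^*\iota_{K_\eps}\Upsilon\right) = 0$, together with $k_0=\Id$, closes Step~1.

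Step~2, however, contains a concrete error. You write that applying the fundamental-theorem-of-calculus machinery to $f = H_1$ ``produces precisely the integral displayed in the statement as the value of $S_0$.'' That confuses two different objects. Applying the FTC argument of \eqref{firstorderscattering1}--\eqref{firstorderscattering2} to $f=H_1$ gives the infinitesimal change $\tfrac{d}{d\eps}\big|_{\eps=0} H_1(\tilde s_\eps(x)) = \mathcal{S}_0(H_1) = [H_1,S_0]$, a Poisson bracket with $S_0$, not $S_0$ itself, so no identification of $S_0$ with the displayed integral is obtained this way. The remaining claim, that one can verify the relation $\iota_{\mathcal{S}_0}\Upsilon = dS_0$ by applying the FTC to coordinate functions, is plausible but is not carried out, and hides two genuine pieces of work: first, the $\vphi$-component of $\mathcal{S}_0$ is more delicate than the $I$-component computed in the proof of Theorem~\ref{prop:melnikov_jump}, because $\dot\vphi=\omega(I)+\eps\,\partial_I h$ has an $O(1)$ term, so the FTC produces a contribution from $\omega(I(\tilde\phi^\sigma_\eps(\tilde x)))-\omega(I(\tilde\phi^\sigma_\eps(\tilde x^\pm)))$ that must be shown to be $O(\eps)$ and integrable before being expanded; second, after computing $\mathcal{S}_0$, one must actually differentiate the improper integral defining $S_0$ in $(I,\vphi)$ (through both arguments $(\Omega^\st_0)^{-1}(x)$ and $\phi^t(x)$) and match it with the Poisson-bracket integrands coming from the FTC. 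These are precisely the manipulations that the paper delegates to \cite{DelshamsLS08a}; your proposal identifies the right tools but leaves the decisive computation undone.
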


Notice that, because the above expression is a first order calculation, and the integrands converge very
fast, we can substitute for the homoclinic orbit $\Omega^s_\eps$ the limiting value at $\eps = 0$, which is
precisely obtained taking the time $\tau^*$ in the homoclinic intersection.

If one performs the symplectic extension described at the beginning and applies the formula
\eqref{firstorderscattering}, one obtains \eqref{eqn:melnikov_int}.

Notice that, compared with Theorem~\ref{prop:melnikov_jump},
Proposition~\ref{symplecticscattering} also provides some information
on the fast variables.

We refer to \cite{DelshamsLS08a}
for a proof of Proposition~\ref{symplecticscattering},
including the fact that the leading term of the deformation is
the Melnikov integral.

We just mention that the fact that the leading term of the deformation
is given by the Melnikov integral can be made plausible
by the following heuristic argument
(even if it is appealing, we do not know how
to turn it into a proof).
 Recall that the perturbation
theory exists and the first order perturbation should
be a linear functional on the perturbing hamiltonian.
It is reasonable to assume that it depends only on the values
of the Hamiltonian  (and may be its derivatives),
evaluated on the connecting orbit and on the segments of
the asymptotic orbits in the manifold.  To be a generator, it has to
have units of action.
Hence, it has to be
an integral over the connecting trajectory, presumably with weights.
By the invariance with respect to time, we obtain that the weight has
to be constant along the connecting orbit and
the segments  of the unperturbed orbits
in the manifold.  To obtain a convergent integral, we see that we have to
take the weights with one sign in the connecting orbit and with the
opposite sign in the asymptotic orbits. The above heuristic
argument shows that the generator of the family of scattering maps
has to be the Melnikov potential (up to a constant factor).

\section{Some structure on the time dependence of
the perturbations}
\label{sec:nonautonomous}

In many situations appearing in practice, the perturbations
are not arbitrary functions in time, but have some extra
structure such as being quasiperiodic.

In this section, we sketch a geometric formalism in which
the dependence is generating by another flow on another manifold.
This includes as particular cases the periodic and quasi-periodic
perturbations, but also other cases such as chaotic driving by an
Anosov system. It also allows to formulate the fact that the perturbations
may have recurrence in the time dependence. In \cite{GideaL17} it is
shown that, in certain cases, some mild recurrence is enough to
generate Arnol'd diffusion.  Indeed, the formalism described in this
section is the one considered in \cite{GideaL17}, but it also standard
in the theory of time-dependent dynamical systems.

We  consider a \emph{clock manifold} $N$, whose points
which we denote by $\eta$, evolve according to a vector field $\U$,
which generates a flow $\chi^t$.
We will need to assume that the flow $\chi^t$ has a sufficiently small
growth rate so that $\Lambda_0 \times N$ is a
a normally hyperbolic manifold  for the product flow $\phi^t \times \chi^t$.

\begin{equation}\label{eqn:hamiltonian2}
\begin{split}
H_\eps(p,q,I,\vphi,t)=h_0(I)+\sum
_{i=1} ^{n}\pm\left (\frac{1}{2}p_i^2+V_i(q_i)\right)+\eps
h(p, q, I,\vphi, \chi^t(\eta_0); \eps).
\end{split}
\end{equation}
and the Melnikov function etc. will depend on $\eta_0$.

Some important particular cases of
the formalism are periodic and quasi-periodic flows.
In the periodic case, $N = \torus$ and the flow $\chi^t$ is just
$\U( \eta) = 1$. In the case of quasi-periodic perturbations,
we have that $N = \torus^m$ and the flow is just rotation
of constant speed.

We note that we do not need that the manifold $N$ is compact,
but we will need that the clock flow is Lipschitz with a
small constant. For example, if we ignore the recurrence properties
and take $N = \real $ and $\U(\eta)= 1$.

In this formalism, the Melnikov vector and the Melnikov potential
become respectively,
\begin{equation} \label{eqn:melnikov_vect2}
\begin{split}
\Mv_i(\tau, I, \vphi, \eta_0) =
\displaystyle-\int
_{-\infty}^{\infty}&\left[
(\X^1 P_i)(p^0(\tau+\sigma\bar 1),q^0(\tau +\sigma\bar 1),I,\vphi+\sigma\omega(I), \chi^\sigma(\eta_0);0)\right .\\
&\left .-(\X^1 P_i)(0,0,I,\vphi+\sigma\omega(I),\chi^\sigma(\eta_0);0)\right
]d\sigma.
\end{split}
\end{equation}

\begin{equation}\label{eqn:melnikov_int2}
\begin{split}
\tilde{\mathcal{M}}(\tau,I,\vphi,\eta_0)=
\displaystyle-\int
_{-\infty}^{\infty}&\left
[h(p^0(\tau+\sigma\bar 1),q^0(\tau +\sigma\bar 1),I,\vphi+\sigma\omega(I),\chi^\sigma(\eta_0);0)\right .\\
&\left .-h(0,0,I,\vphi+\sigma\omega(I),\chi^\sigma(\eta_0); 0)\right
]d\sigma.
\end{split}
\end{equation}

The proof of the formula \eqref{eqn:melnikov_vect} follows
by applying the argument in the text to the extended system $\phi^t \times \chi^t$
taking into account that the $\Lambda_0 \times N$ is a normally
hyperbolic manifold and that the perturbations have smooth
manifolds  (here we use the fact that the growth rates of
the flow $\chi^t$ are small enough).

\bibliographystyle{alpha}
\bibliography{diffusion}

\def\cprime{$'$} \def\cprime{$'$} \def\cprime{$'$} \def\cprime{$'$}
  \def\cprime{$'$} \def\cprime{$'$}
\begin{thebibliography}{dlLMM86}

\bibitem[BdlLW96]{BanyagaLW96}
A.~Banyaga, R.~de~la Llave, and C.E. Wayne.
\newblock Cohomology equations near hyperbolic points and geometric versions of
  {S}ternberg linearization theorem.
\newblock {\em J. Geom. Anal.}, 6(4):613--649, 1996.

\bibitem[BF98]{BaldomaFontich1998}
M~Baldoma and E~Fontich.
\newblock Poincar{\'e}-melnikov theory for n-dimensional diffeomorphisms.
\newblock {\em Applicationes Mathematicae}, 25(2):129--152, 1998.

\bibitem[BLW91]{BeigieLW91}
D.~Beigie, A.~Leonard, and S.~Wiggins.
\newblock Chaotic transport in the homoclinic and heteroclinic tangle regions
  of quasiperiodically forced two-dimensional dynamical systems.
\newblock {\em Nonlinearity}, 4(3):775--819, 1991.

\bibitem[BLZ08]{BatesLZ08}
Peter~W. Bates, Kening Lu, and Chongchun Zeng.
\newblock Approximately invariant manifolds and global dynamics of spike
  states.
\newblock {\em Invent. Math.}, 174(2):355--433, 2008.

\bibitem[BW92]{BeigieW92}
Darin Beigie and Stephen Wiggins.
\newblock Dynamics associated with a quasiperiodically forced {M}orse
  oscillator: {A}pplications to molecular dissociation.
\newblock {\em Jour. Phys. A}, 45(7):4803--4827, 1992.

\bibitem[CAP]{CAPD}
The {CAPD} group. {C}omputer {A}ssisted {G}roup in {D}ynamics.
\newblock \url{http://capd.ii.uj.edu.pl/}.
\newblock Accessed: 2017-09-22.

\bibitem[CGDlL17]{CapinskiGL17}
M.~Capinski, M.~Gidea, and R.~De~la Llave.
\newblock Arnold diffusion in the planar elliptic restricted three-body
  problem: mechanism and numerical verification.
\newblock {\em Nonlinearity}, 30(1):329--360, 2017.

\bibitem[CH17]{CanadellH17a}
Marta Canadell and {\`A}lex Haro.
\newblock Computation of quasiperiodic normally hyperbolic invariant tori:
  Rigorous results.
\newblock {\em Journal of Nonlinear Science}, May 2017.

\bibitem[DdlLS00]{DelshamsLS00}
Amadeu Delshams, Rafael de~la Llave, and Tere~M. Seara.
\newblock A geometric approach to the existence of orbits with unbounded energy
  in generic periodic perturbations by a potential of generic geodesic flows of
  {${\bf T}^2$}.
\newblock {\em Comm. Math. Phys.}, 209(2):353--392, 2000.

\bibitem[DdlLS03]{DelshamsLS03}
A.~Delshams, R.~de~la Llave, and T.~M. Seara.
\newblock A geometric mechanism for diffusion in {H}amiltonian systems
  overcoming the large gap problem: announcement of results.
\newblock {\em Electron. Res. Announc. Amer. Math. Soc.}, 9:125--134, 2003.

\bibitem[DdlLS06]{DelshamsLS06b}
Amadeu Delshams, Rafael de~la Llave, and Tere~M. Seara.
\newblock Orbits of unbounded energy in quasi-periodic perturbations of
  geodesic flows.
\newblock {\em Adv. in Math.}, 202(1):64--188, 2006.

\bibitem[DdlLS08]{DelshamsLS08a}
Amadeu Delshams, Rafael de~la Llave, and Tere~M. Seara.
\newblock Geometric properties of the scattering map of a normally hyperbolic
  invariant manifold.
\newblock {\em Adv. Math.}, 217(3):1096--1153, 2008.

\bibitem[DdlLS16]{DLS_Multi}
Amadeu Delshams, Rafael de~la Llave, and Tere~M. Seara.
\newblock Instability of high dimensional {H}amiltonian systems: multiple
  resonances do not impede diffusion.
\newblock {\em Adv. Math.}, 294:689--755, 2016.

\bibitem[DG00]{DelshamsG00}
A.~Delshams and P.~Guti{\'e}rrez.
\newblock Splitting potential and {P}oincar{\'e}--{M}elnikov method for
  whiskered tori in hamiltonian systems.
\newblock {\em J. Nonlinear Sci.}, 10(4):433--476, 2000.

\bibitem[DG01]{DelshamsG00a}
A.~Delshams and P.~Guti{\'e}rrez.
\newblock Homoclinic orbits to invariant tori in {H}amiltonian systems.
\newblock In Christopher K. R.~T. Jones and Alexander~I. Khibnik, editors, {\em
  Multiple-time-scale dynamical systems (Minneapolis, MN, 1997)}, pages 1--27.
  Springer, New York, 2001.

\bibitem[dlLMM86]{LlaveMM86}
R.~de~la Llave, J.~M. Marco, and R.~Moriy{\'o}n.
\newblock Canonical perturbation theory of {A}nosov systems and regularity
  results for the {L}iv\v sic cohomology equation.
\newblock {\em Ann. of Math. (2)}, 123(3):537--611, 1986.

\bibitem[dlLW10]{LlaveW10}
Rafael de~la Llave and Alistair Windsor.
\newblock Liv\v sic theorems for non-commutative groups including
  diffeomorphism groups and results on the existence of conformal structures
  for {A}nosov systems.
\newblock {\em Ergodic Theory Dynam. Systems}, 30(4):1055--1100, 2010.

\bibitem[DLS06]{DelshamsLS03a}
Amadeu Delshams, Rafael de~la Llave, and Tere~M. Seara.
\newblock A geometric mechanism for diffusion in {H}amiltonian systems
  overcoming the large gap problem: heuristics and rigorous verification on a
  model.
\newblock {\em Mem. Amer. Math. Soc.}, 179(844):viii+141, 2006.

\bibitem[DRR96]{DelshamsRamirezRos1996}
Amadeu Delshams and Rafael Ram{\'\i}rez-Ros.
\newblock {P}oincar{\'e}-{M}elnikov-{A}rnold method for analytic planar maps.
\newblock {\em Nonlinearity}, 9(1):1--26, 1996.

\bibitem[DRR97]{DelshamsRamirezRos1997}
Amadeu Delshams and Rafael Ram{\'\i}rez-Ros.
\newblock Melnikov potential for exact symplectic maps.
\newblock {\em Communications in mathematical physics}, 190(1):213--245, 1997.

\bibitem[Fen72]{Fenichel71}
Neil Fenichel.
\newblock Persistence and smoothness of invariant manifolds for flows.
\newblock {\em Indiana Univ. Math. J.}, 21:193--226, 1971/1972.

\bibitem[Fen77]{Fenichel77}
N.~Fenichel.
\newblock Asymptotic stability with rate conditions. {I}{I}.
\newblock {\em Indiana Univ. Math. J.}, 26(1):81--93, 1977.

\bibitem[Fen74]{Fenichel74}
N.~Fenichel.
\newblock Asymptotic stability with rate conditions.
\newblock {\em Indiana Univ. Math. J.}, 23:1109--1137, 1973/74.

\bibitem[Gar00]{Garcia00}
A.~Garc{\'{\i}}a.
\newblock Transition tori near an elliptic fixed point.
\newblock {\em Discrete Contin. Dynam. Systems}, 6(2):381--392, 2000.

\bibitem[GdlL17]{GideaL17}
Marian Gidea and Rafael de~la Llave.
\newblock Perturbations of geodesic flows by recurrent dynamics.
\newblock {\em J. Eur. Math. Soc. (JEMS)}, 19(3):905--956, 2017.

\bibitem[GdlLS14]{GideaLlaveSeara14}
Marian Gidea, Rafael de~la Llave, and Tere Seara.
\newblock A general mechanism of diffusion in hamiltonian systems: Qualitative
  results.
\newblock {\em http://arxiv.org/abs/1405.0866}, 2014.

\bibitem[Hal77]{Hale77}
J.K. Hale.
\newblock {\em Theory of functional differential equations}.
\newblock Number v. 3 in Applied Mathematical Sciences Series. SPRINGER VERLAG
  GMBH, 1977.

\bibitem[Har02]{Hartman02}
P.~Hartman.
\newblock {\em Ordinary Differential Equations}.
\newblock Classics in Applied Mathematics. Society for Industrial and Applied
  Mathematics, 2002.

\bibitem[HM82]{HolmesM82}
P.J. Holmes and J.E. Marsden.
\newblock Melnikov's method and {A}rnol'd diffusion for perturbations of
  integrable {H}amiltonian systems.
\newblock {\em J. Math. Phys.}, 23(4):669--675, 1982.

\bibitem[HPS77]{HirschPS77}
M.W. Hirsch, C.C. Pugh, and M.~Shub.
\newblock {\em Invariant manifolds}, volume 583 of {\em Lecture Notes in Math.}
\newblock Springer-Verlag, Berlin, 1977.

\bibitem[LM00]{LomeliMeiss2000}
H{\'e}ctor~E Lomel{\i} and James~D Meiss.
\newblock Heteroclinic primary intersections and codimension one melnikov
  method for volume-preserving maps.
\newblock {\em Chaos: An Interdisciplinary Journal of Nonlinear Science},
  10(1):109--121, 2000.

\bibitem[LMRR08]{LomeliMR08}
H{\'e}ctor~E. Lomel{\'{\i}}, James~D. Meiss, and Rafael Ram{\'{\i}}rez-Ros.
\newblock Canonical {M}elnikov theory for diffeomorphisms.
\newblock {\em Nonlinearity}, 21(3):485--508, 2008.

\bibitem[Mn63]{Melnikov63}
V.~K. Mel\cprime~nikov.
\newblock On the stability of a center for time-periodic perturbations.
\newblock {\em Trudy Moskov. Mat. Ob\v s\v c.}, 12:3--52, 1963.

\bibitem[Mos65]{Moser65}
J{\"u}rgen Moser.
\newblock On the volume elements on a manifold.
\newblock {\em Trans. Amer. Math. Soc.}, 120:286--294, 1965.

\bibitem[Mos73]{Moser73}
J.~Moser.
\newblock {\em Stable and random motions in dynamical systems}.
\newblock Princeton University Press, Princeton, N. J., 1973.
\newblock With special emphasis on celestial mechanics, Hermann Weyl Lectures,
  the Institute for Advanced Study, Princeton, N. J, Annals of Mathematics
  Studies, No. 77.

\bibitem[MS86]{MacKay}
R.~S. MacKay and P.~G. Saffman.
\newblock Stability of water waves.
\newblock {\em Proc. Roy. Soc. London Ser. A}, 406(1830):115--125, 1986.

\bibitem[Pes04]{Pesin04}
Yakov~B. Pesin.
\newblock {\em Lectures on partial hyperbolicity and stable ergodicity}.
\newblock Zurich Lectures in Advanced Mathematics. European Mathematical
  Society (EMS), Z\"urich, 2004.

\bibitem[Poi99]{Poincare99}
H.~Poincar{\'e}.
\newblock {\em Les m\'ethodes nouvelles de la m\'ecanique c\'eleste}, volume 1,
  2, 3.
\newblock Gauthier-Villars, Paris, 1892--1899.

\bibitem[Rob88]{Robinson88}
Clark Robinson.
\newblock Horseshoes for autonomous hamiltonian systems using the melnikov
  integral.
\newblock {\em Ergodic Theory and Dynamical Systems}, 8:395--409, 12 1988.

\bibitem[Roy06]{Roy2006}
Nicolas Roy.
\newblock Intersections of {L}agrangian submanifolds and the {M}elnikov 1-form.
\newblock {\em J. Geom. Phys}, 56:2203--2229, 2006.

\bibitem[SZ03]{ShatahZeng03}
Jalal Shatah and Chongchun Zeng.
\newblock Orbits homoclinic to centre manifolds of conservative pdes.
\newblock {\em Nonlinearity}, 16(2):591, 2003.

\bibitem[Tuc11]{Tucker11}
Warwick Tucker.
\newblock {\em Validated Numerics: A Short Introduction to Rigorous
  Computations}.
\newblock Princeton University Press, Princeton, NJ, USA, 2011.

\bibitem[Wig90]{Wiggins90}
S.~Wiggins.
\newblock {\em Global bifurcations and chaos: {A}nalytical methods}, volume~73
  of {\em Appl. Math. Sci.}
\newblock Springer, New York, 1990.

\bibitem[Wig93]{Wiggins93}
Stephen Wiggins.
\newblock {\em Global dynamics, phase space transport, orbits homoclinic to
  resonances, and applications}, volume~1 of {\em Fields Institute Monographs}.
\newblock American Mathematical Society, Providence, RI, 1993.

\end{thebibliography}

\end{document}